\definecolor{viola}{rgb}{0.3,0,0.7}
\definecolor{ciclamino}{rgb}{0.5,0,0.5}
\def\pier #1{{\color{viola}#1}}
\def\pier #1{#1}
\def\@cite#1#2{[{{\bfseries #1}\if@tempswa , #2\fi}]}
\renewcommand{\section}{%
\@startsection{section}{1}{\z@}
{0.5truecm plus -1ex minus -.2ex}%
{1.0ex plus .2ex}{\bfseries\large}}
\def\@seccntformat#1{\csname the#1\endcsname.\ }
\numberwithin{equation}{section} 
\newtheorem{thm}{Theorem}[section]
\newtheorem{lem}[thm]{Lemma}
\theoremstyle{definition}
\newtheorem{df}{Definition}[section]
\newtheorem{remark}{Remark}[section]
\newtheorem*{prth2.1}{Proof of Theorem 2.1}
\newtheorem*{prth2.2}{Proof of Theorem 2.2}
\newtheorem*{prlem3.1}{Proof of Lemma 3.1}
\newtheorem*{prlem3.2}{Proof of Lemma 3.2}
\newcommand{\ep}{\varepsilon}
\newcommand{\Rd}{\mathbb{R}^d}
\def\checkmmode #1{\relax\ifmmode\hbox{#1}\else{#1}\fi}
\let\hat\widehat
\def\Beta{\hat{\vphantom t\smash\beta\mskip2mu}\mskip-1mu}
\def\Pi{\hat\pi}
\begin{document}
\footnote[0]
    {2010 {\it Mathematics Subject Classification}\/: 
    35G31; 35D05; 35A40;  80A22.}
\footnote[0]
    {{\it Key words and phrases}\/: 
    phase separation; conserved phase field system; entropy balance;  
    nonlinear partial differential equations; existence;
    approximation and time discretization.} 
\begin{center}
    \Large{{\bf Global existence for a phase separation system \\ 
                     deduced from the entropy balance
           }}
\end{center}
\vspace{5pt}
\begin{center}
    Pierluigi Colli\\
    \vspace{2pt}
    Dipartimento di Matematica ``F. Casorati", Universit\`a di Pavia \\ 
    and Research Associate at the IMATI -- C.N.R. Pavia \\ 
    via Ferrata 5, 27100 Pavia, Italy\\
    {\tt pierluigi.colli@unipv.it}\\
    \vspace{12pt}
    Shunsuke Kurima%
   \footnote{Corresponding author}\\
    \vspace{2pt}
    Department of Mathematics, 
    Tokyo University of Science\\
    1-3, Kagurazaka, Shinjuku-ku, Tokyo 162-8601, Japan\\
    {\tt shunsuke.kurima@gmail.com}\\
    \vspace{2pt}
\end{center}
\begin{center}    
    \small \today
\end{center}

\vspace{2pt}
\newenvironment{summary}
{\vspace{.5\baselineskip}\begin{list}{}{%
     \setlength{\baselineskip}{0.85\baselineskip}
     \setlength{\topsep}{0pt}
     \setlength{\leftmargin}{12mm}
     \setlength{\rightmargin}{12mm}
     \setlength{\listparindent}{0mm}
     \setlength{\itemindent}{\listparindent}
     \setlength{\parsep}{0pt}
     \item\relax}}{\end{list}\vspace{.5\baselineskip}}
\begin{summary}
{\footnotesize {\bf Abstract.}
     This paper is concerned with a 
thermomechanical model describing phase separation \pier{phenomena} in
terms of the entropy balance and equilibrium equations for the  
microforces. 
The related system is highly nonlinear and admits singular potentials 
in the phase equation. 
Both the viscous and the non-viscous cases are considered 
in the Cahn--Hilliard relations characterizing the phase dynamics.  
The entropy balance is written in terms of the absolute temperature 
and of its logarithm, appearing under time derivative. 
The initial and boundary value problem is considered  
for the system of partial differential equations.   
The existence of a global solution is proved via some approximations 
involving Yosida regularizations and a suitable time discretization.   
}
\end{summary}
\vspace{10pt}

\newpage

\section{Introduction and results} \label{Sec1}

In this paper we address the following system of partial differential equations
\begin{align}
         &\partial_t (c_{s}\ln\theta + \lambda(\varphi)) 
         - \eta\Delta\theta = f  , 
 \label{1pier}
 \\
        &\partial_{t}\varphi - \Delta \mu = 0 ,
 \label{2pier}
 \\
         &\mu = \tau\partial_{t}\varphi 
         - \gamma\Delta\varphi + \xi + \sigma'(\varphi) 
         - \lambda'(\varphi)\theta,\quad \xi \in \beta(\varphi) ,  
\label{3pier} 
\end{align}
in the cylindrical domain $\Omega \times (0,T)$, where $\Omega\subset \mathbb{R}^3$ 
is a bounded and smooth open set, and $T>0$ denotes some final time. The system is complemented by the boundary conditions 
\begin{equation}
         \eta\partial_{\nu}\theta + \alpha_{\Gamma}(\theta-\theta_{\Gamma}) = 0, \quad
         \partial_{\nu}\mu = 0 , \quad \partial_{\nu}\varphi = 0    \quad                                
          \mbox{ on}\ \Gamma\times(0, T),
\label{4pier}
\end{equation}
where $\Gamma$ denotes the smooth boundary of $\Omega$, and by the initial conditions
\begin{equation}
        (\ln\theta)(0) = \ln\theta_0,\quad \varphi(0)=\varphi_0        \quad                                   
         \mbox{ in}\ \Omega.  
\label{5pier}         
\end{equation}
The equations and conditions \eqref{1pier}-\eqref{5pier}, with an inclusion in 
\eqref{3pier} as well, yield an initial and boundary value problem for the nonlinear 
phase field system \eqref{1pier}-\eqref{3pier}, which results from a thermomechanical 
model describing phase separation in terms of the variables {\it absolute temperature} $
\theta$, {\it order parameter} $\varphi$ and {\it chemical potential} $\mu$ 
(cf.~\cite{Bon2005, bcfgdue, BCF, BF, BFR, MiSc}).
Equation \eqref{1pier} gives account of an entropy balance and 
\eqref{2pier},~\eqref{3pier} render the equilibrium equations for the  
microforces that govern the phase separation phenomenon. Note that combining \eqref{2pier} and \eqref{3pier} yields actually the well-known Cahn--Hilliard equation 
in which the mixed term $- \lambda'(\varphi)\theta$ accounts for the contribution of temperature. Concerning equation \eqref{1pier}, let us emphasize that this equation is singular with respect to the temperature, 
due to the presence of the logarithm, which forces the temperature
to assume only positive values (in accordance with physical consistency; 
similar systems have been studied in the literature, e.g.,  \cite{BBR,bcfgdue,BCFG2007,BCFG2009,BFR,BCF,BCG,BR2007,CC,C2018,GR2006}).

Here, the positive 
constant $c_s$ represents the specific heat of the system; $\eta >0$ is a
thermal parameter for the entropy flux;  the factor $\lambda'(\varphi)$ in
$\partial_t (\lambda  (\varphi )) = \lambda'(\varphi)\partial_{t} \varphi$ plays as
latent heat, and the known right-hand side $f$ stands for an external entropy source. 
Moreover, $\gamma >0$ is a small positive parameter and the coefficient $\tau $ can be positive or zero: accordingly, we speak of viscous Cahn--Hilliard or non-viscous 
Cahn--Hilliard system, respectively. In fact, the term $\tau\partial_{t}\varphi$ 
represents a viscosity term in the description of the order parameter dynamics. Concerning the nonlinearities, we inform that $\lambda $ and $\sigma$ are two smooth functions in $\mathbb{R}$, with at most quadratic growth at infinity since the derivatives $\lambda'$ and $\sigma'$ are Lipschitz continuous in $\mathbb{R}$. On the other hand, the nonlinearity $\beta$ may represent a maximal monotone graph in  $\mathbb{R} \times \mathbb{R}$, possibly multivalued, with $0\in \beta (0)$, which turns out to be the subdifferential of a convex and lower semicontinuous function  $\hat{\beta} : \mathbb{R} \to [0,+\infty] $ with minimum value $0$ assumed in $0$.

Typical and physically significant examples for $\,\hat{\beta}\,$ 
are the {\em regular potential}, the {\em logarithmic potential\/},
and the {\em indicator potential\/}, which are given, in this order,~by
\begin{align}
  & \hat{\beta}_{reg}(r) := \frac 14 \, r^4 \,,
  \quad r \in \mathbb{R}, 
  \label{regpot}
  \\[3mm]
  & \hat{\beta}_{log}(r) :=  
  \begin{cases}
  (1+r)\ln (1+r)+(1-r)\ln (1-r)  \ &\hbox{if } r \in (-1,1),\\[2mm]
  2\ln 2 \quad &\hbox{if }  r \in \{-1,1\},\\[2mm]
  + \infty  \quad &\hbox{if } r \in (-\infty,-1) \cup (1, +\infty ),
  \end{cases} 
  \label{logpot}
  \\[3mm]
  & \hat{\beta}_{ind}(r) := 
  \begin{cases} 0 
  \quad &\hbox{if $|r|\leq1$},
  \\[2mm]
  +\infty
  \  &\hbox{if $|r|>1$}.
  \end{cases}
  \label{obspot}
\end{align}
Note that in cases like \eqref{regpot} the subdifferential $\beta$ coincides with 
the derivative $\Beta'(r) = r^3 $ and \eqref{3pier} becomes an equation with $\xi=\varphi^3$. Almost the same occurs in the case \eqref{logpot} since the subdifferential 
$\beta$ has the domain $(-1,1)$ and, in its domain, $\beta (r) = \ln (1+r) - \ln (1-r) $. 
On the other hand, in the case \eqref{obspot} $\beta$ is actually a graph  
\begin{equation*}
s\in \beta (r) \quad \hbox{ if and only if } \quad r\in [-1,1], 
\quad  s 
\begin{cases}
\leq 0 \quad  \hbox{if $r=-1$},\\
= 0 \quad  \hbox{if $-1<r<1$},\\
\geq 0 \quad  \hbox{if $r=1$}. 
\end{cases} 
\end{equation*}
Next, after the presentation of possible functions $\Beta$, let us remark that the sum 
of the three terms
\begin{equation}
\label{6pier} 
 \hat{\beta} (\varphi) + \sigma (\varphi ) - \lambda(\varphi)\theta
\end{equation} 
constitutes a part of the (local) free energy density, which has usually the structure of a double-well or multi-well potential and, according to different values of
the temperature $\theta$, may prefer one or another of the possible minimal states, or become fully convex if $\theta$ enters a suitable range of temperatures.  For instance, one can take   
$$
  \sigma (r ) = \theta_a\,r  - \theta_b\, r^2 ,  \quad \ \lambda(r)= r - r^2, 
  \quad r \in \mathbb{R},
  $$
with $\theta_a < \theta_b$ denoting some critical temperatures, so that if $\theta < \theta_b$ one of the two minima if preferred according whether or not $\theta > \theta_a$; instead, if $\theta > \theta_b$ the potential in \eqref{6pier} is convex. 
 
About the 
boundary conditions \eqref{4pier}, we point out that the boundary condition for $\theta$ states that the external flow on the boundary is proportional to the difference of temperatures between the interior and exterior of the body, via the given positive function $\alpha_\Gamma$ on  $\Gamma$, where the external temperature $\theta_{\Gamma}$ is prescribed on $\Gamma \times (0,T)$. On the other hand, in \eqref{4pier} two no-flux boundary conditions are assumed for $\mu$ and $\varphi$, as usual for the dynamics of Cahn--Hilliard models. In particular, this entails that the phase field system under consideration is of {\it conserved} type, since the integration by parts over $\Omega \times (0,t) $ of the equation~\eqref{2pier} yields the conservation property for the mean value of $\varphi$, i.e.,
\begin{equation*}
\frac1{|\Omega|} \int_\Omega \varphi (t) = \frac1{|\Omega|} \int_\Omega \varphi_0 \quad \hbox{for }\, t\in (0,T), 
\end{equation*}
where $\varphi_0$ represents the known initial value for the order parameter in 
\eqref{5pier} (similar phase field systems of conserved type are studied in 
\cite{CGGS1, CGGS2, CGLN}. 
Note that $\theta_0$ stands for the initial value of the temperature,
but the right initial condition to prescribe is for $ \ln\theta $, since it is this function $ \ln\theta $ appearing under the time derivative in \eqref{1pier}.

The related system for phase transitions, which gives rise to a phase field model of nonconserved type, has been intensively discussed in the papers \cite{bcfgdue, BCFG2007}
by taking into account some memory effects as well. Indeed, in the case of a phase transition, the equations \eqref{2pier} and \eqref{3pier} are replaced by a single equation of Allen--Cahn type:
$$\partial_{t}\varphi 
         - \gamma\Delta\varphi + \xi + \sigma'(\varphi) 
         - \lambda'(\varphi)\theta,\quad \xi \in \beta(\varphi) ,  
$$ 
and the chemical potential does not play any role. The approach of \cite{bcfgdue, 
BCFG2007} follows some ideas previously developed in \cite{BCF}, with the aim of 
combining the thermal memory theory by Gurtin-Pipkin~\cite{GPmodel} with additional 
dissipative instantaneous contributions coming from a pseudo-potential of dissipation.
The use of an entropy balance is recovered 
from a rescaling (with respect to the absolute temperature) of the energy balance, 
under the small perturbations assumption (see, in particular, \cite{bcfgdue}).
In \cite{BFR} a fairly general theory is introduced, in which 
a dual approach (mainly in the sense of convex analysis) is considered, and 
the entropy and the history of the entropy flux are taken as state variables, 
along with the phase parameter and possibly its gradient. 
Then the dissipative functional is written in terms of a dissipative contribution in the entropy flux and for the time derivative of the phase parameter. This argumention may be 
understood in the light of general theories discussed in \cite{FaGioMo, Frel, Gu, Muller} as well. However, we have to point 
out that this framework is not far from the approach by Green-Naghdi 
\cite{GreenNaghdi} (see also \cite{M2017}) and Podio-Guidugli~\cite{Podio1}, 
in which some 
{\it thermal displacement} is introduced as state variable and 
the equations come from a  generalization of the principle of 
virtual powers, in which thermal forces are included. 
As a consequence, in this setting, the entropy equation is  formally obtained as a momentum balance (i.e., a balance of thermal forces acting in the system). Let us mention the related contributions~\cite{CC1, CC2}, where some asymptotic analyses are carried out to find the interconnections among peculiar Green and Naghdi types, and \cite{CGMQ}, where a model with two temperatures for heat conduction with memory, apt to describe transition phenomena in nonsimple materials, is investigated. 

Another example of an entropy balance equation can be found in the recent paper~\cite{KS}, where
a diffuse interface model is proposed to describe the multi-component two-phase fluid flow with  partial miscibility, by combining the first law of thermodynamics and related thermodynamical relations.

Eventually, let us quote the paper \cite{GR2007} and compare the results with ours. 
Indeed, the system \eqref{1pier}-\eqref{3pier} was already considered in the paper 
\cite{GR2007}, by dealing with Dirichlet boundary conditions for the temperature, 
and well-posedness results were discussed along with the investigation of the $\omega$-limit set for the system. We advice the reader that the existence results contained in \cite{GR2007} are similar to ours, although in the present contribution we are able to improve the thesis in the interesting non-viscous case $\tau=0$, by allowing a (significant) quadratic growth for $\lambda$, while \cite{GR2007}
only deals with Lipschitz continuous functions $\lambda$ in this limiting case. Moreover, we can give a complete proof of the existence of solutions, with respect to \cite{GR2007} 
where a priori estimates are plainly derived on the direct problem without implementing a suitable approximation. Furthermore, we treat the case of the third-type 
boundary condition for $\theta$ (cf.~\eqref{4pier}), differently from Dirichlet boundary conditions used in \cite{GR2007} (and already examined in~\cite{BCFG2007} for the nonconserved system). Thus, we ask the readers to follow our arguments and refer to the next section, where the problems are mathematically stated and a precise formulation is given with assumptions and results.

\section{\pier{Statement of problems} and main results}\label{SecPier}

In this paper we consider 
the following initial-boundary value problems   
%
%
 \begin{equation*}\tag*{(P)}\label{P}
     \begin{cases}
         \partial_t (c_{s}\ln\theta + \lambda(\varphi)) - \eta\Delta\theta = f   
         & \mbox{in}\ \Omega\times(0, T), 
 \\[2mm]
        \partial_{t}\varphi - \Delta \mu = 0 
         & \mbox{in}\ \Omega\times(0, T), 
 \\[2mm]
         \mu = - \gamma\Delta\varphi + \xi + \sigma'(\varphi) 
         - \lambda'(\varphi)\theta,\ \xi \in \beta(\varphi)   
         & \mbox{in}\ \Omega\times(0, T), 
 \\[2mm]
         \eta\partial_{\nu}\theta + \alpha_{\Gamma}(\theta-\theta_{\Gamma}) = 
         \partial_{\nu}\mu = \partial_{\nu}\varphi = 0                                   
         & \mbox{on}\ \Gamma\times(0, T),
 \\[2mm]
        (\ln\theta)(0) = \ln\theta_0,\ \varphi(0)=\varphi_0                                          
         & \mbox{in}\ \Omega, 
     \end{cases}
 \end{equation*}
%
%
%
 \begin{equation*}\tag*{(P)$_{\tau}$}\label{Ptau}
     \begin{cases}
         \partial_t (c_{s}\ln\theta_{\tau} + \lambda(\varphi_{\tau})) 
         - \eta\Delta\theta_{\tau} = f   
         & \mbox{in}\ \Omega\times(0, T), 
 \\[2mm]
        \partial_{t}\varphi_{\tau} - \Delta \mu_{\tau} = 0 
         & \mbox{in}\ \Omega\times(0, T), 
 \\[2mm]
         \mu_{\tau} = \tau\partial_{t}\varphi_{\tau} 
         - \gamma\Delta\varphi_{\tau} + \xi_{\tau} + \sigma'(\varphi_{\tau}) 
         - \lambda'(\varphi_{\tau})\theta_{\tau},\ \xi_{\tau} \in \beta(\varphi_{\tau})   
         & \mbox{in}\ \Omega\times(0, T), 
 \\[2mm]
         \eta\partial_{\nu}\theta_{\tau} 
         + \alpha_{\Gamma}(\theta_{\tau}-\theta_{\Gamma}) = 
         \partial_{\nu}\mu_{\tau} = \partial_{\nu}\varphi_{\tau} = 0                                   
         & \mbox{on}\ \Gamma\times(0, T),
 \\[2mm]
        (\ln\theta_{\tau})(0) = \ln\theta_0,\ \varphi_{\tau}(0)=\varphi_0                                          
         & \mbox{in}\ \Omega, 
     \end{cases}
 \end{equation*}
where $\Omega$ is a bounded domain in $\Rd$ ($d = 1, 2, 3$)
with smooth boundary $\Gamma:=\partial\Omega$. 
Moreover, we deal with the following conditions (C1)-(C7):  
%
%
%
\begin{enumerate} 
\setlength{\itemsep}{0mm}
\item[(C1)] $\beta \subset \mathbb{R}\times\mathbb{R}$                                
is a maximal monotone graph with effective domain $D(\beta)$ 
such that $\mbox{Int}\,D(\beta) \neq \emptyset$,   
and $\beta(r) = \partial\widehat{\beta}(r)$, where 
$\partial\widehat{\beta}$ denotes the subdifferential of 
a proper lower semicontinuous convex function 
$\widehat{\beta} : \mathbb{R} \to [0, +\infty]$ 
which has the effective domain $D(\widehat{\beta})$ and 
satisfies $\widehat{\beta}(0) = 0$.    
\item[(C2)] $\displaystyle\lim_{|r|\to+\infty}\frac{\widehat{\beta}(r)}{|r|^2}=+\infty$. 
\item[(C3)] $\sigma, \lambda \in C^1(\mathbb{R})$ and 
                $\sigma'$ and $\lambda'$ are Lipschitz continuous. 
\item[(C4)]  $\alpha_{\Gamma} \in L^{\infty}(\Gamma)$ and 
                 there exist positive constants $\alpha^{*}, \alpha_{*}$ such that 
                 $$
                 \alpha_{*} \leq \alpha_{\Gamma} \leq \alpha^{*}\quad 
                 \mbox{a.e.\ on}\ \Gamma. 
                 $$
\item[(C5)] $f \in L^2(0, T; L^2(\Omega))$.  
\item[(C6)] $\varphi_0 \in H^1(\Omega)$ 
                 and $\widehat{\beta}(\varphi_{0}) \in L^1(\Omega)$; 
                 moreover, the mean value 
                 $m_{0} := \frac{1}{|\Omega|}\int_{\Omega}\varphi_{0}$ 
                 lies in $\mbox{Int}\,D(\beta)$. 
\item[(C7)] $\theta_{\Gamma} \in L^{\infty}(\Gamma\times(0, T))$, 
                $\theta_{0} \in L^{\infty}(\Omega)$   
                and there exist positive constants $\theta^{*}, \theta_{*}$ such that 
                $$
                \theta_{*} \leq \theta_{\Gamma} \leq \theta^{*} 
                \quad\mbox{a.e.\ on}\ \Gamma\times(0, T) 
                \qquad \mbox{and}\qquad  
                \theta_{*} \leq \theta_0 \leq \theta^{*} 
                \quad\mbox{a.e.\ on}\ \Omega.  
                $$
\end{enumerate} 
Please note that a consequence of (C1) is that 
$0 \in \beta(0)$ since $0$ is a minimum for $\hat{\beta}$.

\smallskip

%
%
%
\pier{Let us define the Hilbert spaces 
   $$
   H:=L^2(\Omega), \quad V:=H^1(\Omega)
   $$
 with inner products 
 \begin{align*} 
 &(u_{1}, u_{2})_{H}:=\int_{\Omega}u_{1}u_{2}\,dx \quad  (u_{1}, u_{2} \in H), \\
 &(v_{1}, v_{2})_{V}:=
 \int_{\Omega}\nabla v_{1}\cdot\nabla v_{2}\,dx + \int_{\Omega} v_{1}v_{2}\,dx \quad 
 (v_{1}, v_{2} \in V),
\end{align*}
 respectively,
 and with the related Hilbertian norms.}
 Moreover, \pier{we use the notation}
   $$
   W:=\bigl\{z\in H^2(\Omega)\ |\ \partial_{\nu}z = 0 \quad 
   \mbox{a.e.\ on}\ \partial\Omega\bigr\}.
   $$ 
 The notation $V^{*}$ denotes the dual space of $V$ with 
 duality pairing $\langle\cdot, \cdot\rangle_{V^*, V}$. 
 Moreover, in this paper, the bijective mapping $F : V \to V^{*}$ and 
 the inner product in $V^{*}$ are defined as 
    \begin{align}
    &\langle Fv_{1}, v_{2} \rangle_{V^*, V} := 
    (v_{1}, v_{2})_{V} \quad (v_{1}, v_{2}\in V),   
    \label{defF}
    \\[1mm]
    &(v_{1}^{*}, v_{2}^{*})_{V^{*}} := 
    \left\langle v_{1}^{*}, F^{-1}v_{2}^{*} 
    \right\rangle_{V^*, V} 
    \quad (v_{1}^{*}, v_{2}^{*}\in V^{*}).   
    \label{innerVstar}
    \end{align}
This article employs the Hilbert space 
  $$
  V_{0}:=\left\{ z \in H^1(\Omega)\ \Big{|} \ \int_{\Omega} z = 0 \right\}   
  $$
 with inner product 
 \begin{align*} 
 (v_{1}, v_{2})_{V_{0}}:=
 \int_{\Omega}\nabla v_{1}\cdot\nabla v_{2}\,dx 
 \quad (v_{1}, v_{2} \in V_{0}) 
\end{align*}
 and with the related Hilbertian norm.
 The notation $V_{0}^{*}$ denotes the dual space of $V_{0}$ with 
 duality pairing $\langle\cdot, \cdot\rangle_{V_{0}^*, V_{0}}$. 
 Moreover, in this paper, the bijective mapping ${\cal N} : V_{0}^{*} \to V_{0}$ and 
 the inner product in $V_{0}^{*}$ are specified by  
    \begin{align}
    &\langle v^{*}, v \rangle_{V_{0}^*, V_{0}} := 
    \int_{\Omega} \nabla {\cal N}v^{*} \cdot \nabla v 
    \quad (v^{*} \in V_{0}^*, v \in V_{0}),   
    \label{defN}
    \\[1mm]
    &(v_{1}^{*}, v_{2}^{*})_{V_{0}^{*}} := 
    \left\langle v_{1}^{*}, {\cal N}v_{2}^{*} 
    \right\rangle_{V_{0}^*, V_{0}} 
    \quad (v_{1}^{*}, v_{2}^{*}\in V_{0}^{*}).  
    \label{innerVzerostar}
    \end{align}

We define weak solutions of \ref{P} and \ref{Ptau} as follows. 

\begin{df}
A quadruple $(\theta, \mu, \varphi, \xi)$ with 
\begin{align*}
&\theta \in L^2(0, T; V), \\ 
&\mu \in L^2(0, T; V), \\ 
&\varphi \in H^1(0, T; V^{*}) \cap L^{\infty}(0, T; V) \cap L^2(0, T; W), \\
&\xi \in L^2(0, T; H), \\ 
&c_{s}\ln\theta + \lambda(\varphi) \in H^1(0, T; V^{*}), \\ 
&\ln\theta, \lambda(\varphi) \in L^{\infty}(0, T; H) 
\end{align*}
is called a {\it weak solution} of \ref{P} if $(\theta, \mu, \varphi, \xi)$ satisfies 
\begin{align}
&\bigl\langle (c_{s}\ln\theta + \lambda(\varphi))_{t}, v \bigr\rangle_{V^{*}, V} 
  + \eta\int_{\Omega}\nabla\theta\cdot\nabla v 
  + \int_{\Gamma} \alpha_{\Gamma}\theta v 
\label{dfPsol1} \\ 
  &= (f, v)_{H} + \int_{\Gamma} \alpha_{\Gamma}\theta_{\Gamma}v 
       \quad \mbox{a.e.\ on}\ (0, T) \quad \mbox{for all}\ v \in V, \notag  
\\[3mm] 
&\langle \varphi_{t}, v \rangle_{V^{*}, V} 
   + \int_{\Omega}\nabla\mu\cdot\nabla v = 0 
     \quad \mbox{a.e.\ on}\ (0, T) \quad \mbox{for all}\ v \in V, \label{dfPsol2} 
\\[1.5mm] 
&\mu = -\gamma\Delta\varphi+\xi+\sigma'(\varphi)-\lambda'(\varphi)\theta,\ 
                                                                      \xi \in \beta(\varphi)   
         \quad \mbox{a.e.\ on}\ \Omega\times(0, T), \label{dfPsol3} 
\\[3mm] 
&(c_{s}\ln\theta+\lambda(\varphi))(0) =c_{s}\ln\theta_0+\lambda(\varphi_0),\
 \varphi(0)=\varphi_0                                          
        \quad \mbox{a.e.\ on}\ \Omega. \label{dfPsol4}
\end{align}
\end{df}
%
%
%
\begin{df}
A quadruple $(\theta_{\tau}, \mu_{\tau}, \varphi_{\tau}, \xi_{\tau})$ with 
\begin{align*}
&\theta_{\tau} \in L^2(0, T; V), \\ 
&\mu_{\tau} \in L^2(0, T; V), \\ 
&\varphi_{\tau} \in H^1(0, T; H) \cap L^{\infty}(0, T; V) \cap L^2(0, T; W), \\
&\xi_{\tau} \in L^2(0, T; H), \\ 
&\ln\theta_{\tau}, \lambda(\varphi_{\tau}) 
                                                    \in H^1(0, T; V^{*}) \cap L^{\infty}(0, T; H) 
\end{align*}
is called a {\it weak solution} of \ref{Ptau} 
if $(\theta_{\tau}, \mu_{\tau}, \varphi_{\tau}, \xi_{\tau})$ satisfies 
\begin{align}
&c_{s}\bigl\langle (\ln\theta_{\tau})_{t}, v \bigr\rangle_{V^{*}, V} 
  + \bigl\langle (\lambda(\varphi_{\tau}))_{t}, 
                                                      v \bigr\rangle_{V^{*}, V} 
  + \eta\int_{\Omega}\nabla\theta_{\tau}\cdot\nabla v 
  + \int_{\Gamma} \alpha_{\Gamma}\theta_{\tau} v 
\label{dfPtausol1} \\ 
  &= (f, v)_{H} + \int_{\Gamma} \alpha_{\Gamma}\theta_{\Gamma}v 
       \quad \mbox{a.e.\ on}\ (0, T) \quad \mbox{for all}\ v \in V, \notag  
\\[3mm] 
&(\partial_{t}\varphi_{\tau}, v )_{H}  
   + \int_{\Omega}\nabla\mu_{\tau}\cdot\nabla v = 0 
     \quad \mbox{a.e.\ on}\ (0, T) \quad \mbox{for all}\ v \in V, \label{dfPtausol2} 
\\[1.5mm] 
&\mu_{\tau} = \tau\partial_{t}\varphi_{\tau} - \gamma\Delta\varphi_{\tau} 
                    + \xi_{\tau} 
                    + \sigma'(\varphi_{\tau}) - \lambda'(\varphi_{\tau})\theta_{\tau},\ 
                                                           \xi_{\tau} \in \beta(\varphi_{\tau})   
         \quad \mbox{a.e.\ on}\ \Omega\times(0, T), \label{dfPtausol3} 
\\[3mm] 
&(\ln\theta_{\tau})(0)=\ln\theta_0,\
 \varphi_{\tau}(0)=\varphi_0                                          
        \quad \mbox{a.e.\ on}\ \Omega. \label{dfPtausol4}
\end{align}
\end{df}

Now the main results read as follows.
\begin{thm}\label{maintheorem1}
Assume that {\rm (C1)-(C7)} hold 
and let  $\overline{\tau}$ denote a fixed bound 
for the viscosity coefficient $\tau$.     
Then there is   
a weak solution $(\theta_{\tau}, \mu_{\tau}, \varphi_{\tau}, \xi_{\tau})$ 
of {\rm \ref{Ptau}} for all $\tau \in (0, \overline{\tau})$.  
Moreover, 
there exists a constant $M>0$  
depending only on the data such that   
\begin{align*}
&\tau\|\partial_{t}\varphi_{\tau}\|_{L^2(0, T; H)}^2 
+ \|\varphi_{\tau}\|_{L^{\infty}(0, T; V)}^2 
+ \|\theta_{\tau}\|_{L^2(0, T; V)}^2 
\leq M, \\ 
&\|\partial_{t}\varphi_{\tau}\|_{L^2(0, T; V^{*})}^2 
  + \|\mu_{\tau}\|_{L^2(0, T; V)}^2 + \|\xi_{\tau}\|_{L^2(0, T; H)}^2 
  + \|\varphi_{\tau}\|_{L^2(0, T; W)}^2 
\leq M, \\ 
&\|c_{s}\ln\theta_{\tau}+\lambda(\varphi_{\tau})\|_{H^1(0, T; V^{*}) 
                                                                           \cap L^{\infty}(0, T; H)}^2 
\leq M, \\ 
&\|\ln\theta_{\tau}\|_{L^{\infty}(0, T; H)}^2 
   + \|\lambda(\varphi_{\tau})\|_{L^{\infty}(0, T; H)}^2 
  \leq M, \\ 
&\|(\ln\theta_{\tau})_{t}\|_{L^2(0, T; V^{*})}^2 
   + \|(\lambda(\varphi_{\tau}))_{t}\|_{L^2(0, T; V^{*})}^2 
   \leq M(1+\tau^{-1})   
\end{align*}
for all $\tau \in (0, \overline{\tau})$.   
\end{thm}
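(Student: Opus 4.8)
The plan is to construct a solution of \ref{Ptau} (for $\tau>0$ fixed) by a doubly regularized, time-discrete scheme, and then to pass to the limit using a~priori estimates that simultaneously deliver the bounds claimed in the statement. First I would regularize the two sources of singularity: replace $\beta$ by its Yosida approximation $\beta_{\varepsilon}$ (Lipschitz and monotone, with associated $\widehat{\beta}_{\varepsilon}$), and treat the logarithm by the change of unknown $w=\ln\theta$, so that $\theta=e^{w}$ and the map $w\mapsto e^{w}$ is monotone and keeps $\theta>0$ automatically. On a uniform partition of $[0,T]$ of step $h=T/n$ I would write the backward-Euler version of \eqref{dfPtausol1}--\eqref{dfPtausol3}, obtaining at each step a stationary elliptic system whose solvability follows from maximal-monotone-operator (surjectivity) theory, with the Robin condition on $\theta$ and the viscosity term $\tau\partial_t\varphi_\tau$ supplying coercivity. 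This produces piecewise-constant/piecewise-linear interpolants, and the solution of \ref{Ptau} is recovered by letting first $n\to\infty$ and then $\varepsilon\to0$.

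The heart of the matter is the derivation of estimates uniform in $n$ and $\varepsilon$. The structural device is to combine the \emph{mechanical} and the \emph{entropy} balances so that the coupling cancels. Testing \eqref{dfPtausol2} by $\mu_\tau$ and inserting \eqref{dfPtausol3} produces on the left $\tau\|\partial_t\varphi_\tau\|_H^2+\tfrac{\gamma}{2}\tfrac{d}{dt}\|\nabla\varphi_\tau\|_H^2+\tfrac{d}{dt}\int_\Omega(\widehat{\beta}(\varphi_\tau)+\sigma(\varphi_\tau))+\|\nabla\mu_\tau\|_H^2-\int_\Omega\lambda'(\varphi_\tau)\theta_\tau\,\partial_t\varphi_\tau$, while testing \eqref{dfPtausol1} by $\theta_\tau$ yields $c_s\tfrac{d}{dt}\int_\Omega\theta_\tau+\eta\|\nabla\theta_\tau\|_H^2+\int_\Gamma\alpha_\Gamma\theta_\tau^2+\int_\Omega\lambda'(\varphi_\tau)\theta_\tau\,\partial_t\varphi_\tau$. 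Adding them, the terms $\pm\int_\Omega\lambda'(\varphi_\tau)\theta_\tau\,\partial_t\varphi_\tau$ cancel. I would then absorb the right-hand sides $(f,\theta_\tau)_H$ and $\int_\Gamma\alpha_\Gamma\theta_\Gamma\theta_\tau$ by Young's inequality, use the conservation of the spatial mean of $\varphi_\tau$ together with Poincar\'e--Wirtinger to upgrade $\|\nabla\varphi_\tau\|_H$ to $\|\varphi_\tau\|_V$, exploit the superquadratic growth (C2) to dominate the (at most quadratic) negative part of $\sigma$ by $\widehat{\beta}$, and use the trace-coercivity from (C4) (equivalence of $\|\nabla z\|_H^2+\|z\|_{L^2(\Gamma)}^2$ with $\|z\|_V^2$) to turn $\eta\|\nabla\theta_\tau\|_H^2+\int_\Gamma\alpha_\Gamma\theta_\tau^2$ into control of $\|\theta_\tau\|_{L^2(0,T;V)}$. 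A Gronwall argument yields the first two lines of the estimates.

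Next I would test \eqref{dfPtausol1} by $\ln\theta_\tau$ (legitimate at the approximate level, where $w=\ln\theta_\tau\in V$): the parabolic term gives $\tfrac{c_s}{2}\tfrac{d}{dt}\|\ln\theta_\tau\|_H^2$, the diffusion term the nonnegative contribution $\eta\int_\Omega|\nabla\theta_\tau|^2/\theta_\tau$, and the boundary term is bounded below since $s\mapsto s\ln s$ is. The delicate coupling $\int_\Omega\lambda'(\varphi_\tau)\partial_t\varphi_\tau\ln\theta_\tau$ I would control by H\"older (using $V\hookrightarrow L^6$, so $\lambda'(\varphi_\tau)\in L^\infty(0,T;L^6)$) and Young, closing by Gronwall to obtain $\ln\theta_\tau\in L^\infty(0,T;H)$ and, through (C3), $\lambda(\varphi_\tau)\in L^\infty(0,T;H)$. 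For the $W$- and $\xi$-bounds I would rewrite \eqref{dfPtausol3} as $-\gamma\Delta\varphi_\tau+\xi_\tau=\mu_\tau-\tau\partial_t\varphi_\tau-\sigma'(\varphi_\tau)+\lambda'(\varphi_\tau)\theta_\tau=:g_\tau\in L^2(0,T;H)$, test by $\xi_\tau=\beta_\varepsilon(\varphi_\tau)$ (note $-\gamma\int_\Omega\Delta\varphi_\tau\,\xi_\tau=\gamma\int_\Omega\beta_\varepsilon'(\varphi_\tau)|\nabla\varphi_\tau|^2\ge0$) to bound $\|\xi_\tau\|_{L^2(0,T;H)}$, and then invoke elliptic regularity for $\|\varphi_\tau\|_{L^2(0,T;W)}$. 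The mean of $\mu_\tau$, needed to pass from $\|\nabla\mu_\tau\|_H$ to $\|\mu_\tau\|_V$, is controlled through the standard interior-of-domain lemma using $m_0\in\operatorname{Int}D(\beta)$ from (C6). Finally $\|\partial_t\varphi_\tau\|_{L^2(0,T;V^{*})}\le\|\nabla\mu_\tau\|_{L^2(0,T;H)}$ follows from \eqref{dfPtausol2}, while $\|(\lambda(\varphi_\tau))_t\|_{L^2(0,T;V^{*})}$ and, via \eqref{dfPtausol1}, $\|(\ln\theta_\tau)_t\|_{L^2(0,T;V^{*})}$ are estimated by pairing with $v\in V$; here the factor $1+\tau^{-1}$ appears precisely because $\|\partial_t\varphi_\tau\|_{L^2(0,T;H)}\le\sqrt{M/\tau}$.

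With all bounds in hand, the passage to the limit proceeds by weak and weak-$*$ compactness, Aubin--Lions--Simon providing strong convergence of $\varphi_\tau$ (hence of $\sigma'(\varphi_\tau)$, $\lambda'(\varphi_\tau)$, $\lambda(\varphi_\tau)$), and the inclusion $\xi_\tau\in\beta(\varphi_\tau)$ surviving by the usual maximal-monotonicity argument ($\limsup\int\xi_\tau\varphi_\tau\le\int\xi\varphi$). The main obstacle is the temperature: one must identify the limit of $\ln\theta_\tau$ with $\ln$ of the weak limit of $\theta_\tau$ while guaranteeing $\theta>0$ a.e. I would do this via the maximal monotonicity of the graph $s\mapsto\ln s$ together with the $L^\infty(0,T;H)$ bound on $\ln\theta_\tau$ and the compactness of $\ln\theta_\tau$ in $C([0,T];V^{*})$, which also fixes the initial condition \eqref{dfPtausol4}. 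Securing the $\ln\theta$ estimate against the nonlinear coupling, and carrying out this final identification, are where the real difficulty lies.
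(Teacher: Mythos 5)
Your overall architecture (Yosida regularization of $\beta$, time discretization, limit in $h$ then in $\ep$, cancellation of the coupling terms in the combined energy estimate, the $m_0\in\mbox{Int}\,D(\beta)$ device for the mean of $\mu_\tau$, testing the phase equation by $\beta_\ep(\varphi_\tau)$ for the $\xi$-bound, and the maximal-monotonicity identification of $\ln\theta$ in the limit) matches the paper's. But there is a genuine gap at the step you yourself flag as the crux: the $L^\infty(0,T;H)$ bound on $\ln\theta_\tau$. Testing \eqref{dfPtausol1} by $\ln\theta_\tau$ leaves the coupling term $\int_\Omega\lambda'(\varphi_\tau)\,\partial_t\varphi_\tau\,\ln\theta_\tau$, and your proposed H\"older/Young/Gronwall closure fails twice over: with $\lambda'(\varphi_\tau)\in L^\infty(0,T;L^6)$ and $\partial_t\varphi_\tau\in L^2(0,T;H)$ the H\"older exponents force $\ln\theta_\tau\in L^3(\Omega)$, which is not controlled by the quantity $\|\ln\theta_\tau\|_{H}$ you are running Gronwall on (and no gradient bound on $\ln\theta_\tau$ is available --- the diffusion term only yields $\int|\nabla\theta_\tau|^2/\theta_\tau$); and even granting the integrability, $\|\partial_t\varphi_\tau\|_{L^2(0,T;H)}\leq\sqrt{M/\tau}$ would make the resulting constant blow up as $\tau\searrow0$, contradicting the uniform-in-$\tau$ bound $\|\ln\theta_\tau\|_{L^\infty(0,T;H)}^2\leq M$ claimed in the statement. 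The paper avoids the coupling term altogether by testing with the \emph{full conserved quantity} $v=c_s\mbox{Ln}_\ep(\theta_\ep)+\lambda_\ep(\varphi_\ep)$, so that the time-derivative term becomes the exact square $\frac12\frac{d}{dt}\|c_s\mbox{Ln}_\ep(\theta_\ep)+\lambda_\ep(\varphi_\ep)\|_H^2$ with no $\partial_t\varphi_\ep$ left over; the new term $\eta\int_\Omega\nabla\theta_\ep\cdot\nabla\lambda_\ep(\varphi_\ep)$ is then absorbed using the already-established $L^2(0,T;W)$ bound on $\varphi_\ep$ together with $\|\lambda_\ep''\|_{L^\infty}\leq M_\lambda$, and the two summands are separated afterwards because $\|\lambda_\ep(\varphi_\ep)\|_{L^\infty(0,T;H)}$ is bounded directly from $\varphi_\ep\in L^\infty(0,T;V)\hookrightarrow L^\infty(0,T;L^4)$ and the quadratic growth of $\lambda$. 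This is the idea your proof is missing.

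A secondary concern is your treatment of the logarithm by the substitution $\theta=e^{w}$. At the time-discrete level this produces elliptic problems of the form $c_s w-\eta h\Delta e^{w}=g$, whose operator is neither coercive nor Lipschitz on $V$ (the exponential degenerates as $w\to-\infty$ and grows superlinearly), so the surjectivity and, above all, the contraction/uniqueness arguments for the discrete step do not go through as stated. The paper instead keeps $\theta$ as the unknown and regularizes the graph by $\mbox{Ln}_\ep(r)=\ep r+\ln_\ep(r)$, which is Lipschitz with $\mbox{Ln}_\ep'\geq\ep$; this strong monotonicity is exactly what drives the solvability of the discrete scheme (via a Banach fixed point combining the two elliptic subproblems) and the uniform estimates. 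You would need to replace your substitution by a regularization of this kind, or supply a different solvability argument for the exponential formulation.
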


\begin{thm}\label{maintheorem2}
Assume that {\rm (C1)-(C7)} hold. 
Then there exists a weak solution of {\rm \ref{P}}. 
\end{thm}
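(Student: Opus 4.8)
The plan is to construct a weak solution of \ref{P} by passing to the limit as $\tau\to0$ in the family $(\theta_{\tau},\mu_{\tau},\varphi_{\tau},\xi_{\tau})$ of weak solutions of \ref{Ptau} provided by Theorem~\ref{maintheorem1}. The decisive point is that every estimate listed in Theorem~\ref{maintheorem1} is uniform with respect to $\tau\in(0,\overline{\tau})$, with the single exception of the last one, in which the right-hand side $M(1+\tau^{-1})$ blows up. Consequently one cannot expect any time-compactness for $\ln\theta_{\tau}$ or for $\lambda(\varphi_{\tau})$ taken \emph{separately}; what does survive is the $\tau$-uniform control of the \emph{sum} $c_{s}\ln\theta_{\tau}+\lambda(\varphi_{\tau})$ in $H^1(0,T;V^{*})\cap L^{\infty}(0,T;H)$. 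This is precisely the grouping adopted in \eqref{dfPsol1}, so the definition of weak solution of \ref{P} is tailored to the non-viscous limit.

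First I would extract, along a suitable sequence $\tau\to0$, the limits prescribed by the uniform bounds: $\varphi_{\tau}\weak\varphi$ weakly-star in $L^{\infty}(0,T;V)$ and weakly in $L^2(0,T;W)$ with $\partial_{t}\varphi_{\tau}\weak\partial_{t}\varphi$ weakly in $L^2(0,T;V^{*})$; $\theta_{\tau}\weak\theta$ and $\mu_{\tau}\weak\mu$ weakly in $L^2(0,T;V)$; $\xi_{\tau}\weak\xi$ weakly in $L^2(0,T;H)$; $\ln\theta_{\tau}\weak\ell$ weakly-star in $L^{\infty}(0,T;H)$; and $c_{s}\ln\theta_{\tau}+\lambda(\varphi_{\tau})\weak S$ in $H^1(0,T;V^{*})$. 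By the Aubin--Lions--Simon lemma applied to $\varphi_{\tau}$ (bounded in $L^2(0,T;W)$ with derivative bounded in $L^2(0,T;V^{*})$, and $W\hookrightarrow\hookrightarrow V\hookrightarrow\hookrightarrow H\hookrightarrow V^{*}$) I obtain $\varphi_{\tau}\to\varphi$ strongly in $L^2(0,T;V)\cap C([0,T];H)$, hence a.e.\ in $Q:=\Omega\times(0,T)$; applying the same lemma to the sum (bounded in $H^1(0,T;V^{*})\cap L^{\infty}(0,T;H)$, and $H\hookrightarrow\hookrightarrow V^{*}$) I obtain $c_{s}\ln\theta_{\tau}+\lambda(\varphi_{\tau})\to S$ strongly in $C([0,T];V^{*})$. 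Finally, since $\|\tau\partial_{t}\varphi_{\tau}\|_{L^2(0,T;H)}^2=\tau\bigl(\tau\|\partial_{t}\varphi_{\tau}\|_{L^2(0,T;H)}^2\bigr)\leq\tau M\to0$, the viscosity term satisfies $\tau\partial_{t}\varphi_{\tau}\to0$ strongly in $L^2(0,T;H)$.

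The main obstacle is the identification of the temperature nonlinearity, namely the proof that $\ell=\ln\theta$ (in particular $\theta>0$ a.e.), which is delicate exactly because the separate bound on $(\ln\theta_{\tau})_{t}$ has been lost. The idea is to recover strong convergence of $\ln\theta_{\tau}$ from that of the sum. Since $\lambda'$ is Lipschitz and $\varphi_{\tau}\to\varphi$ strongly in $L^2(0,T;V)\hookrightarrow L^2(0,T;L^6)$ while $\varphi_{\tau}$ stays bounded in $L^{\infty}(0,T;L^6)$, one checks that $\lambda(\varphi_{\tau})\to\lambda(\varphi)$ strongly in $L^2(Q)$; subtracting this from the strong $C([0,T];V^{*})$ convergence of the sum yields $\ln\theta_{\tau}\to\ell$ strongly in $L^2(0,T;V^{*})$ and $S=c_{s}\ell+\lambda(\varphi)$, with $\ell\in L^{\infty}(0,T;H)$. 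Pairing the strong $V^{*}$-convergence of $\ln\theta_{\tau}$ against the weak $L^2(0,T;V)$-convergence of $\theta_{\tau}$ gives
\[
\int_{Q}\ln\theta_{\tau}\,\theta_{\tau}
=\int_{0}^{T}\langle\ln\theta_{\tau},\theta_{\tau}\rangle_{V^{*},V}\,dt
\longrightarrow\int_{0}^{T}\langle\ell,\theta\rangle_{V^{*},V}\,dt
=\int_{Q}\ell\,\theta.
\]
Since the graph of $r\mapsto\ln r$ ($r>0$) is maximal monotone and $(\theta_{\tau},\ln\theta_{\tau})$ lies on it, with $\theta_{\tau}\weak\theta$ and $\ln\theta_{\tau}\weak\ell$ weakly in $L^2(Q)$, the above convergence yields in particular $\limsup_{\tau\to0}\int_{Q}\ln\theta_{\tau}\,\theta_{\tau}\le\int_{Q}\ell\,\theta$, which is the condition required by the Minty closedness argument; hence $\theta>0$ and $\ell=\ln\theta$ a.e.\ in $Q$. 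The remaining nonlinear terms are handled more easily: $\sigma'(\varphi_{\tau})\to\sigma'(\varphi)$ strongly in $L^2(Q)$ by Lipschitz continuity; the coupling term is split as $\lambda'(\varphi_{\tau})\theta_{\tau}=(\lambda'(\varphi_{\tau})-\lambda'(\varphi))\theta_{\tau}+\lambda'(\varphi)\theta_{\tau}$ and passes to the limit using the strong convergence of $\lambda'(\varphi_{\tau})$ together with $\theta_{\tau}\weak\theta$; and the inclusion $\xi\in\beta(\varphi)$ follows from $\xi_{\tau}\weak\xi$ in $L^2(Q)$, $\varphi_{\tau}\to\varphi$ strongly in $L^2(Q)$, and the maximal monotonicity of $\beta$ (via $\int_{Q}\xi_{\tau}\varphi_{\tau}\to\int_{Q}\xi\varphi$).

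With all limits identified, I would pass to the limit in the time-integrated form of \eqref{dfPtausol1}--\eqref{dfPtausol3}. In \eqref{dfPtausol1} the two separate duality terms recombine, in view of $S=c_{s}\ln\theta+\lambda(\varphi)$, into $\langle(c_{s}\ln\theta+\lambda(\varphi))_{t},v\rangle_{V^{*},V}$, while the diffusion term and the boundary terms pass by weak $L^2(0,T;V)$ convergence and continuity of the trace operator, producing \eqref{dfPsol1}; identity \eqref{dfPtausol2} becomes \eqref{dfPsol2} upon replacing $(\partial_{t}\varphi_{\tau},v)_{H}$ by $\langle\partial_{t}\varphi,v\rangle_{V^{*},V}$; and \eqref{dfPtausol3} becomes \eqref{dfPsol3} because $\tau\partial_{t}\varphi_{\tau}\to0$. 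The initial conditions \eqref{dfPsol4} are recovered from the $C([0,T];V^{*})$ convergence of $\varphi_{\tau}$ and of the sum, together with $(\ln\theta_{\tau})(0)=\ln\theta_{0}$ and $\varphi_{\tau}(0)=\varphi_{0}$. The regularity demanded in the definition of weak solution of \ref{P} follows from weak and weak-star lower semicontinuity of the norms under the uniform estimates of Theorem~\ref{maintheorem1}, which completes the construction.
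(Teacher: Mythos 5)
Your proposal is correct and follows essentially the same route as the paper: extract the $\tau$-uniform limits from Theorem~\ref{maintheorem1}, exploit the strong convergence of $\varphi_{\tau}$ and of the combined quantity $c_{s}\ln\theta_{\tau}+\lambda(\varphi_{\tau})$ in $C([0,T];V^{*})$, identify $\ln\theta$ and $\xi\in\beta(\varphi)$ through the standard maximal-monotonicity lemma applied to the convergence of $\int_{0}^{T}(\ln\theta_{\tau},\theta_{\tau})_{H}$ and $\int_{0}^{T}(\xi_{\tau},\varphi_{\tau})_{H}$, and pass to the limit in the weak formulation. The only cosmetic difference is that you first deduce strong $L^2(0,T;V^{*})$ convergence of $\ln\theta_{\tau}$ by subtracting $\lambda(\varphi_{\tau})$ from the sum, whereas the paper computes the limit of $\int_{0}^{T}(\ln\theta_{\tau},\theta_{\tau})_{H}$ directly from that same decomposition.
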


This paper is organized as follows. 
In Section \ref{Sec2} we consider a suitable approximation of \ref{Ptau}  
in terms of a parameter $\ep > 0$ and introduce a time discretization as well.  
Section \ref{Sec3} contains 
the proof of the existence for the discrete problem. 
In Section \ref{Sec4} we deduce uniform estimates for the time discrete solutions 
and consequently pass to the limit as the time step tends to zero. 
Additional a priori estimates, independent of the parameter $\ep$, 
are shown in Section \ref{Sec5} 
so that the existence of solutions for \ref{Ptau} is inferred via a limit procedure 
as $\ep \searrow 0$. 
Finally, in Section \ref{Sec6} we prove the existence of solutions to \ref{P} by 
taking the limit in \ref{Ptau} as $\tau \searrow 0$. 


\section{Approximations}\label{Sec2}

To establish existence of solutions to \ref{Ptau} 
we consider the approximation 
%
%
 \begin{equation*}\tag*{(P)$_{\ep}$}\label{Ptauep}
     \begin{cases}
         \partial_t (c_{s}\mbox{\rm Ln$_{\ep}$}(\theta_{\ep}) 
                                              + \lambda_{\ep}(\varphi_{\ep})) 
         - \eta\Delta\theta_{\ep} = f   
         & \mbox{in}\ \Omega\times(0, T), 
 \\[2mm]
        \partial_{t}\varphi_{\ep} - \Delta \mu_{\ep} = 0 
         & \mbox{in}\ \Omega\times(0, T), 
 \\[2mm]
         \mu_{\ep} = \tau\partial_{t}\varphi_{\ep} 
         - \gamma\Delta\varphi_{\ep} + \beta_{\ep}(\varphi_{\ep}) 
         + \sigma'(\varphi_{\ep}) 
         - \lambda_{\ep}'(\varphi_{\ep})\theta_{\ep}    
         & \mbox{in}\ \Omega\times(0, T), 
 \\[2mm]
         \eta\partial_{\nu}\theta_{\ep} 
         + \alpha_{\Gamma}(\theta_{\ep}-\theta_{\Gamma}) = 
         \partial_{\nu}\mu_{\ep} = \partial_{\nu}\varphi_{\ep} = 0                                   
         & \mbox{on}\ \Gamma\times(0, T),
 \\[2mm]
        (\mbox{\rm Ln$_{\ep}$}(\theta_{\ep}))(0) 
        = \mbox{\rm Ln$_{\ep}$}(\theta_0),\ 
        \varphi_{\ep}(0)=\varphi_0                                          
         & \mbox{in}\ \Omega, 
     \end{cases}
 \end{equation*}
where $\ep \in (0, 1]$, 
$\mbox{\rm Ln$_{\ep}$}(r):=\ep r + \ln_{\ep}(r)$, $r \in \mathbb{R}$, and 
$\ln_{\ep}$ is the Yosida approximation operator of $\ln$ on $\mathbb{R}$,   
$\beta_{\ep} : \mathbb{R} \to \mathbb{R}$ is the 
Yosida approximation operator of $\beta$ on $\mathbb{R}$,
and $\lambda_{\ep} : \mathbb{R} \to \mathbb{R}$ satisfies 
$\lambda_{\ep} \in C^1(\mathbb{R})$ and 
\begin{align}
&\lambda_{\ep}\ \mbox{and}\ \lambda_{\ep}'\ 
\mbox{are Lipschitz continuous}, \label{lamep1} \\[1mm] 
&|\lambda_{\ep}(0)| + |\lambda_{\ep}'(0)| 
  + \|\lambda''_{\ep}\|_{L^{\infty}(\mathbb{R})} \leq M_{\lambda} 
  \quad \mbox{for all}\ \ep \in (0, 1], \label{lamep2} \\[1mm] 
&\lambda_{\ep}(r) \to \lambda(r) \quad\mbox{and}\quad 
  \lambda_{\ep}'(r) \to \lambda'(r) \quad\mbox{as}\ \ep\searrow0\ 
  \mbox{for all}\ r \in \mathbb{R}, \label{lamep3}
\end{align}  
with some constant $M_{\lambda}>0$. 
\begin{remark}\label{remark.lambep}
A possible choice for $\lambda_{\ep}$ is  
$$
 \lambda_{\ep}(r)=
   \begin{cases}
   \lambda(\frac{1}{\ep}) + \lambda'(\frac{1}{\ep})\left(r - \frac{1}{\ep} \right) 
   & \mbox{if}\ 
   r > \frac{1}{\ep}, 
   \\[3mm]
   \lambda(r) 
   & \mbox{if}\ 
   |r| \leq \frac{1}{\ep}, 
   \\[3mm] 
    \lambda(-\frac{1}{\ep}) + \lambda'(-\frac{1}{\ep})\left(r + \frac{1}{\ep} \right) 
   & \mbox{if}\ 
   r < - \frac{1}{\ep}.  
   \end{cases}
$$
Indeed, we have that 
$$
 \lambda_{\ep}'(r)=
   \begin{cases}
   \lambda'(\frac{1}{\ep})  
   & \mbox{if}\ 
   r > \frac{1}{\ep}, 
   \\[3mm]
   \lambda'(r) 
   & \mbox{if}\ 
   |r| \leq \frac{1}{\ep}, 
   \\[3mm] 
    \lambda'(-\frac{1}{\ep})  
   & \mbox{if}\ 
   r < - \frac{1}{\ep}  
   \end{cases}
$$
and 
$$
 \lambda_{\ep}''(r)=
   \begin{cases}
   0  
   & \mbox{if}\ 
   r > \frac{1}{\ep}, 
   \\[3mm]
   \lambda''(r) 
   & \mbox{if}\ 
   |r| < \frac{1}{\ep}, 
   \\[3mm] 
   0   
   & \mbox{if}\ 
   r < - \frac{1}{\ep},   
   \end{cases}
$$
and hence we can confirm that 
$\lambda_{\ep} : \mathbb{R} \to \mathbb{R}$ is Lipschitz continuous, 
$\lambda_{\ep}' : \mathbb{R} \to \mathbb{R}$ is Lipschitz continuous and bounded. 
The properties \eqref{lamep1}-\eqref{lamep3} are satisfied because 
$\lambda_{\ep}(0)=\lambda(0)$, $\lambda_{\ep}'(0)=\lambda'(0)$  
and  
$\|\lambda_{\ep}''\|_{L^{\infty}(\mathbb{R})}
\leq\|\lambda''\|_{L^{\infty}(\mathbb{R})}$ for all $\ep \in (0, 1]$.    
\end{remark}
\begin{remark}\label{about.Lnep}
We have that the function $\mbox{Ln}_{\ep}$ is monotone and Lipschitz continuous 
(see, e.g., \cite[p.\ 28]{Brezis}) 
and satisfies the inequality 
$\mbox{Ln}_{\ep}'(r) \geq \ep$ for all $r \in \mathbb{R}$.  
\end{remark}
\begin{remark}\label{def.of.rhoep}
Let $\rho_{\ep} : \mathbb{R} \to \mathbb{R}$ be 
the resolvent operator of $\ln$ on $\mathbb{R}$.
Then $\rho_{\ep}$ is positive and  
$\rho_{\ep}(r)$ is the unique solution of 
the equation 
$\rho_{\ep}(r) + \ep\ln \rho_{\ep}(r) = r$ 
for any $r \in \mathbb{R}$. 
Thus we can emphasize that 
$\ln_{\ep}(r) = \ln \rho_{\ep}(r) = \frac{1}{\ep}(r-\rho_{\ep}(r))$ 
for all $r \in \mathbb{R}$.
\end{remark}
\begin{remark}\label{aboutbetahatep}
The function $\hat{\beta}_{\ep} : \mathbb{R} \to \mathbb{R}$ defined by 
$$
\hat{\beta}_{\ep}(r):=
\displaystyle\inf_{s\in\mathbb{R}}\left\{\frac{1}{2\ep}|r-s|^2+\hat{\beta}(s) \right\} 
\quad \mbox{for}\ r\in \mathbb{R} 
$$ 
is called the Moreau--Yosida regularization  
of $\hat{\beta}$, 
which has the identity 
$$
\hat{\beta}_{\ep}(r)=\frac{1}{2\ep}|r-J_{\ep}^{\beta}(r)|^2 
+ \hat{\beta}(J_{\ep}^{\beta}(r))
$$ 
for all $r\in\mathbb{R}$ and all $\ep >0$, 
where $J_{\ep}^{\beta}$ is the resolvent operator of $\beta$  
on $\mathbb{R}$.  
Moreover, we can infer that  
$$
\beta_{\ep}(r) = \partial\hat{\beta}_{\ep}(r) = \frac{d}{dr}\hat{\beta}_{\ep}(r), \quad 
0\leq \hat{\beta}_{\ep}(r) \leq \hat{\beta}(r)
$$
for all $r\in\mathbb{R}$ and all $\ep > 0$ 
(see, e.g., \cite[Theorem 2.9, p.\ 48]{Barbu2}). 
\end{remark}
\begin{remark}\label{Bhatepzero}
We can observe from Remark \ref{aboutbetahatep} that $\beta_{\ep}(0)=0$.  
Indeed, the inequalities $0\leq \hat{\beta}_{\ep}(0) \leq \hat{\beta}(0)$  
and the condition (C1) yield that $\hat{\beta}_{\ep}(0)=0$,   
whence we can derive that 
$$
0=\hat{\beta}_{\ep}(0)=\frac{1}{2\ep}|J_{\ep}^{\beta}(0)|^2 
+ \hat{\beta}(J_{\ep}^{\beta}(0)) 
\geq \frac{1}{2\ep}|J_{\ep}^{\beta}(0)|^2.   
$$
Thus we can verify that $J_{\ep}^{\beta}(0)=0$, 
which implies that    
$\beta_{\ep}(0)=0$ 
by the identity $\beta_{\ep}(r)=\frac{1}{\ep}(r-J_{\ep}^{\beta}(r))$. 
\end{remark}

The definition of weak solutions to \ref{Ptauep} is as follows.   

%
\begin{df}
A triplet $(\theta_{\ep}, \mu_{\ep}, \varphi_{\ep})$ with 
\begin{align*}
&\theta_{\ep} \in L^2(0, T; V), \\ 
&\mu_{\ep} \in L^2(0, T; V), \\ 
&\varphi_{\ep} \in H^1(0, T; H) \cap L^{\infty}(0, T; V) \cap L^2(0, T; W), \\ 
&\mbox{\rm Ln$_{\ep}$}(\theta_{\ep}) 
\in H^1(0, T; V^{*}) \cap L^{\infty}(0, T; H), \\ 
&\lambda_{\ep}(\varphi_{\ep}) \in H^1(0, T; H) \cap L^{\infty}(0, T; V) 
\end{align*}
is called a {\it weak solution} of \ref{Ptauep} 
if $(\theta_{\ep}, \mu_{\ep}, \varphi_{\ep})$ satisfies 
\begin{align}
&c_{s}\bigl\langle (\mbox{\rm Ln$_{\ep}$}(\theta_{\ep}))_{t}, 
                                                                        v \bigr\rangle_{V^{*}, V} 
  + \bigl(\partial_{t}\lambda_{\ep}(\varphi_{\ep}), 
                                                      v \bigr)_{H} 
  + \eta\int_{\Omega}\nabla\theta_{\ep}\cdot\nabla v 
  + \int_{\Gamma} \alpha_{\Gamma}\theta_{\ep} v 
\label{dfPtauepsol1} \\ 
  &= (f, v)_{H} + \int_{\Gamma} \alpha_{\Gamma}\theta_{\Gamma} v 
       \quad \mbox{a.e.\ on}\ (0, T) \quad \mbox{for all}\ v \in V, \notag  
\\[3mm] 
&(\partial_{t}\varphi_{\ep}, v)_{H}  
   + \int_{\Omega}\nabla\mu_{\ep}\cdot\nabla v = 0 
     \quad \mbox{a.e.\ on}\ (0, T) \quad \mbox{for all}\ v \in V, \label{dfPtauepsol2} 
\\[1.5mm] 
&\mu_{\ep} = \tau\partial_{t}\varphi_{\ep} 
                    - \gamma\Delta\varphi_{\ep} 
                    + \beta_{\ep}(\varphi_{\ep}) 
                    + \sigma'(\varphi_{\ep}) 
                    - \lambda'(\varphi_{\ep})\theta_{\ep}   
         \quad \mbox{a.e.\ on}\ \Omega\times(0, T), \label{dfPtauepsol3} 
\\[3mm] 
&(\mbox{\rm Ln$_{\ep}$}(\theta_{\ep}))(0)=\mbox{\rm Ln$_{\ep}$}(\theta_0),\
 \varphi_{\ep}(0)=\varphi_0                                          
        \quad \mbox{a.e.\ on}\ \Omega. \label{dfPtauepsol4}
\end{align}
\end{df}

\begin{lem}\label{existPtauep}
Assume that {\rm (C1)-(C7)} hold. 
Then there exists $\ep_{0} \in (0, 1]$ such that 
there is  
a weak solution 
of {\rm \ref{Ptauep}} for all $\ep \in (0, \ep_{0})$. 
\end{lem}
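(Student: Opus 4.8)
The plan is to construct a weak solution of \ref{Ptauep} for each fixed $\ep \in (0,\ep_0)$ by a backward Euler time discretization, followed by uniform (in the time step) a priori estimates and a passage to the limit as the step vanishes; this is exactly the programme announced for Sections~\ref{Sec3} and~\ref{Sec4}. Fix $\ep$ and, for $N \in \mathbb{N}$, set $h := T/N$ and $f^n := \tfrac1h\int_{(n-1)h}^{nh} f$. Starting from $\theta^0 = \theta_0$, $\varphi^0 = \varphi_0$, I would solve recursively, for $n = 1,\dots,N$, the elliptic system for $(\theta^n, \mu^n, \varphi^n)$ whose temperature equation reads
\[ c_s \frac{\mathrm{Ln}_\ep(\theta^n)-\mathrm{Ln}_\ep(\theta^{n-1})}{h} + \frac{\lambda_\ep(\varphi^n)-\lambda_\ep(\varphi^{n-1})}{h} - \eta\Delta\theta^n = f^n, \]
coupled with the discrete mass balance $(\varphi^n-\varphi^{n-1})/h - \Delta\mu^n = 0$ and the discrete chemical-potential relation $\mu^n = \tau(\varphi^n-\varphi^{n-1})/h - \gamma\Delta\varphi^n + \beta_\ep(\varphi^n) + \sigma'(\varphi^n) - \lambda_\ep'(\varphi^n)\theta^n$, all with the Robin/Neumann conditions and understood variationally.

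Next I would establish solvability of each single time step. Since $\mathrm{Ln}_\ep$ (Remark~\ref{about.Lnep}), $\beta_\ep$ (Remark~\ref{aboutbetahatep}), $\sigma'$, $\lambda_\ep$ and $\lambda_\ep'$ are all Lipschitz, the step is a coupled pair of quasilinear elliptic problems with monotone principal parts. I would decouple them by a fixed-point argument: for a frozen temperature the conserved Cahn--Hilliard pair $(\varphi^n,\mu^n)$ is solved by working in $V_0^{*}$ with the operator $\mathcal N$ and the strongly monotone, Lipschitz map $\tfrac{\tau}{h}\,\mathrm{id} + \beta_\ep + \sigma'$ (plus the $-\gamma\Delta$ term), which is coercive and surjective; for a frozen $\varphi^n$ the temperature equation has principal part $\tfrac{c_s}{h}\mathrm{Ln}_\ep(\cdot) - \eta\Delta(\cdot)$ with a monotone, bounded-below, Lipschitz nonlinearity (note $\mathrm{Ln}_\ep$ is invertible with Lipschitz inverse since $\mathrm{Ln}_\ep' \ge \ep$), hence is uniquely solvable by standard maximal-monotone/variational theory. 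Composing the two solution maps yields a continuous, compact self-map of a suitable ball in $H$, and Schauder's theorem gives $(\theta^n,\mu^n,\varphi^n)$.

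I would then derive a priori bounds independent of $h$. Test the discrete heat equation with $\theta^n$, the discrete mass balance with $\mu^n$ (equivalently with $\mathcal N$ of the increment), and the potential relation with $(\varphi^n-\varphi^{n-1})/h$; summing over $n$ and using the convexity inequality $\beta_\ep(\varphi^n)(\varphi^n-\varphi^{n-1}) \ge \hat\beta_\ep(\varphi^n) - \hat\beta_\ep(\varphi^{n-1})$, the monotonicity of $\mathrm{Ln}_\ep$, Young's inequality, and the data bounds from (C4) and (C7), a discrete Gronwall lemma produces the estimates (possibly depending on $\ep$ and $\tau$) matching the regularity in the definition of weak solution of \ref{Ptauep}. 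The restriction $\ep \in (0,\ep_0)$ enters precisely at the estimate for $\beta_\ep(\varphi)$: using the conservation of the mean $m_0$ and $m_0 \in \mathrm{Int}\,D(\beta)$ (C6), one tests with $\varphi^n - m_0$ and invokes the classical bound $\beta_\ep(r)(r-m_0) \ge \delta_0|\beta_\ep(r)| - C$, valid for all $r$ only once $\ep$ is small enough, which fixes $\ep_0$.

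Finally I would introduce piecewise-constant and piecewise-linear interpolants, extract weakly/weakly-star convergent subsequences from the uniform estimates, upgrade $\varphi$ (and $\theta$) to strong convergence via an Aubin--Lions compactness argument, and identify the limits of the Lipschitz nonlinearities $\mathrm{Ln}_\ep(\theta)$, $\beta_\ep(\varphi)$, $\lambda_\ep(\varphi)$, $\lambda_\ep'(\varphi)$ by strong convergence, thereby recovering \eqref{dfPtauepsol1}--\eqref{dfPtauepsol4} and the initial conditions. I expect the main obstacle to be closing the energy estimate for the coupled system: the heat equation naturally pairs with $\theta^n$ while the phase energy balance generates the mixed product $\lambda_\ep'(\varphi^n)\theta^n(\varphi^n-\varphi^{n-1})/h$, so the two temperature contributions must be balanced simultaneously and the term $-\lambda_\ep'(\varphi)\theta$ absorbed, which requires a careful Young-type splitting together with the control of $\theta$ in $L^2(0,T;V)$ coming from the elliptic coercivity and the Robin boundary term.
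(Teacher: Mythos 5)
Your overall programme (implicit time discretization, single--step solvability via monotone operators plus a fixed point, uniform estimates, discrete Gronwall, passage to the limit) is indeed the one the paper follows in Sections~\ref{Sec3}--\ref{Sec4}, but there is a genuine gap in the design of your scheme that prevents the basic energy estimate from closing. You discretize the latent heat in the entropy balance as $(\lambda_\ep(\varphi^n)-\lambda_\ep(\varphi^{n-1}))/h$ while keeping $-\lambda_\ep'(\varphi^n)\theta^n$ in the chemical--potential relation. Testing the heat equation with $h\theta^n$ produces the cross term $-\int_\Omega(\lambda_\ep(\varphi^n)-\lambda_\ep(\varphi^{n-1}))\theta^n$, whereas the Cahn--Hilliard energy identity produces $+\int_\Omega\lambda_\ep'(\varphi^n)(\varphi^n-\varphi^{n-1})\theta^n$; these do \emph{not} cancel, and the Taylor remainder is of size $\|\lambda_\ep''\|_{L^\infty(\mathbb{R})}\int_\Omega(\varphi^n-\varphi^{n-1})^2|\theta^n|$, which cannot be controlled by the quantities available on the left (one only has $\tau h\sum_n\|\delta_h\varphi_n\|_H^2$ and $h\sum_n\|\theta^n\|_V^2$, and no bound on $\delta_h\varphi_n$ in $L^p(\Omega)$ for $p>2$). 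Your fallback of absorbing the mixed product ``by a careful Young-type splitting'' also fails: since $\lambda$ is allowed quadratic growth, any such splitting produces a term proportional to $\|\lambda_\ep'(\varphi^n)\|_{L^6(\Omega)}^2\|\delta_h\varphi_n\|_H^2\sim(1+\|\varphi^n\|_V^2)\|\delta_h\varphi_n\|_H^2$, which is quadratic in the unknowns and defeats Gronwall. The whole point of the paper's scheme \ref{Ptauepn} is that the latent heat is written as $\lambda_\ep'(\varphi_n)\delta_h\varphi_n$ in the entropy balance \emph{and} as $\lambda_\ep'(\varphi_n)\theta_{n+1}$ in the potential relation, both frozen at the old level $\varphi_n$, so that the two cross terms in \eqref{a7} and \eqref{a19} are identical and cancel exactly; the same freezing is what makes the single step a Banach contraction (with uniqueness) for $h$ small depending on $\ep$ and $\tau$, rather than a Schauder argument.

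Two further points. First, the restriction $\ep<\ep_0$ does not originate from the bound $(\beta_\ep(\varphi),\varphi-m_0)_H\ge c\|\beta_\ep(\varphi)\|_{L^1(\Omega)}-d$, which in the paper holds for every $\ep\in(0,1]$ (cf.\ \eqref{b5}); it comes from the coercivity \eqref{a17} of the Moreau--Yosida regularization, $\tfrac12\hat\beta_\ep(r)\ge\tfrac{\gamma}{2}r^2-C_4$, which relies on (C2) and needs $\ep$ small in order to absorb $\|\varphi_m\|_H^2$ into $\int_\Omega\hat\beta_\ep(\varphi_m)$. Second, you cannot identify the limit of $\mathrm{Ln}_\ep(\overline{\theta}_h)$ simply ``by strong convergence'': there is no a priori bound on a time derivative of $\theta_h$ itself, only on $\partial_t\hat u_h$ with $u_h=\mathrm{Ln}_\ep(\theta_h)$, so Aubin--Lions yields compactness for $u_h$, not for $\theta_h$. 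For fixed $\ep$ one could recover strong convergence of $\theta_h$ through the $1/\ep$-Lipschitz inverse of $\mathrm{Ln}_\ep$, but this must be said; the paper instead pairs the strong $L^2(0,T;V^*)$ convergence of $\overline u_h$ with the weak $L^2(0,T;V)$ convergence of $\overline\theta_h$ and invokes maximal monotonicity to get \eqref{u.eq.Lnep}. Finally, note that the paper inserts the stabilization $h\delta_h\mu_n$ into the discrete mass balance so as to reduce each step to the coercive resolvent equations \eqref{writtendfPtauepnsol2}--\eqref{writtendfPtauepnsol3}; your $\mathcal N$-based decoupling is a legitimate alternative, but you would then have to determine the mean of $\mu^n$ separately from the potential relation.
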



To prove Lemma \ref{existPtauep} we employ a time discretization scheme. 
More precisely, we will deal with the following problem: 
find $(\theta_{n+1}, \mu_{n+1}, \varphi_{n+1}) \in V \times W \times W$ such that  
\begin{equation*}\tag*{(P)$_{n}$}\label{Ptauepn}
     \begin{cases}
         c_{s}\delta_{h}u_{n} + \lambda_{\ep}'(\varphi_{n})\delta_{h}\varphi_{n}
         - \eta\Delta\theta_{n+1} = f_{n+1}   
         & \mbox{in}\ \Omega, 
 \\[2mm]
        \delta_{h}\varphi_{n} 
         + h \delta_{h}\mu_{n} - \Delta \mu_{n+1} 
         = 0  
         & \mbox{in}\ \Omega, 
 \\[2mm]
         \mu_{n+1} = 
         \tau\delta_{h}\varphi_{n} 
         - \gamma\Delta\varphi_{n+1} 
         + \beta_{\ep}(\varphi_{n+1}) 
         + \sigma'(\varphi_{n+1}) 
         - \lambda_{\ep}'(\varphi_{n})\theta_{n+1}    
         & \mbox{in}\ \Omega, 
 \\[2mm]
         \eta\partial_{\nu}\theta_{n+1} 
         + \alpha_{\Gamma}(\theta_{n+1}-\theta_{\Gamma, n+1}) = 
         \partial_{\nu}\mu_{n+1} = \partial_{\nu}\varphi_{n+1} = 0                                   
         & \mbox{on}\ \Gamma
     \end{cases}
\end{equation*}
for $n=0, ... , N-1$, where $h=\frac{T}{N}$, $N \in \mathbb{N}$, 
\begin{align}\label{uj}
&u_{j} := \mbox{\rm Ln$_{\ep}$}(\theta_{j}) 
\end{align}
for $j=0, 1, ..., N$,   
\begin{align}\label{delta1}
&\delta_{h}u_{n} := \frac{u_{n+1}-u_{n}}{h},\ 
\delta_{h}\varphi_{n} 
:= \frac{\varphi_{n+1} - \varphi_{n}}{h},\ 
\delta_{h}\mu_{n} 
:= \frac{\mu_{n+1} - \mu_{n}}{h},  
\end{align}
$f_{k} := \frac{1}{h}\int_{(k-1)h}^{kh} f(s)\,ds$, 
and $\theta_{\Gamma, k} := \frac{1}{h}\int_{(k-1)h}^{kh} \theta_{\Gamma}(s)\,ds$ 
for $k = 1, ... , N$. 
Note that, in order to solve the above system, 
we also need an initial value $\mu_{0}$, which is no present in (C1)-(C7), 
and it is up to our choice. For simplicity, we take 
\begin{align}\label{muzerozero}
\mu_{0} = 0. 
\end{align}
Also, putting 
\begin{align}
& \hat{u}_{h}(0) := \ln_{\ep}(\theta_0),\ 
   \partial_t \hat{u}_{h} (t) := \delta_{h}u_{n}, \label{hat1} 
\\[2mm]  
&\hat{\varphi}_{h}(0) := \varphi_0,\ 
   \partial_t \hat{\varphi}_{h} (t) := \delta_{h}\varphi_{n}, \label{hat2}
\\[2mm]
&\hat{\mu}_{h}(0) := \mu_0,\ 
   \partial_t \hat{\mu}_{h} (t) 
                                             := \delta_{h}\mu_{n}, \label{hat3} 
\\[2mm] 
&\overline{u}_{h}(t) := u_{n+1},\ 
  \overline{\theta}_{h} (t) := \theta_{n+1},\ 
  \underline{\theta}_{h} (t) := \theta_{n},\ 
  \overline{f}_h (t) := f_{n+1},\ \overline{\theta_{\Gamma}}_h (t) 
                                                               := \theta_{\Gamma, n+1},   
\label{line1} 
\\[2mm] 
&\overline{\varphi}_{h} (t) := \varphi_{n+1},\ 
\underline{\varphi}_{h} (t) := \varphi_{n},\ 
\overline{\mu}_{h} (t) := \mu_{n+1}  
\label{line2}   
\end{align}
for a.a.\ $t \in (nh, (n+1)h)$, $n=0, ..., N-1$, 
we can rewrite \ref{Ptauepn} as  
\begin{equation*}\tag*{(P)$_{h}$}\label{Ptaueph}
     \begin{cases}
         c_{s}\partial_t \hat{u}_{h}  
         + \lambda_{\ep}'(\underline{\varphi}_{h})\partial_t \hat{\varphi}_{h} 
         - \eta\Delta\overline{\theta}_{h} = \overline{f}_h    
         & \mbox{in}\ \Omega\times(0, T), 
 \\[2mm]
        \partial_{t}\hat{\varphi}_{h} 
         + h\partial_{t}\hat{\mu}_{h} 
         - \Delta \overline{\mu}_{h} = 0 
         & \mbox{in}\ \Omega\times(0, T), 
 \\[2mm]
         \overline{\mu}_{h} = \tau \partial_{t}\hat{\varphi}_{h} 
         - \gamma\Delta\overline{\varphi}_{h} 
         + \beta_{\ep}(\overline{\varphi}_{h}) 
         + \sigma'(\overline{\varphi}_{h}) 
         - \lambda_{\ep}'(\underline{\varphi}_{h})\overline{\theta}_{h}
         & \mbox{in}\ \Omega\times(0, T), 
 \\[2mm]
         \overline{u}_{h} = \mbox{\rm Ln$_{\ep}$}(\overline{\theta}_{h})   
         & \mbox{in}\ \Omega\times(0, T),  
 \\[2mm]
         \eta\partial_{\nu}\overline{\theta}_{h} 
         + \alpha_{\Gamma}(\overline{\theta}_{h}
                                                   -\overline{\theta_{\Gamma}}_h) = 
         \partial_{\nu}\overline{\mu}_{h} 
         = \partial_{\nu}\overline{\varphi}_{h} = 0                                   
         & \mbox{on}\ \Gamma\times(0, T),
 \\[2mm]
        \hat{u}_{h}(0) = \mbox{\rm Ln$_{\ep}$}(\theta_0),\ 
        \hat{\varphi}_{h}(0) = \varphi_0,\    
        \hat{\mu}_{h}(0) = \mu_0 = 0                                    
         & \mbox{in}\ \Omega.  
     \end{cases}
 \end{equation*}

\begin{remark}
On account of \eqref{muzerozero} and \eqref{hat1}-\eqref{line2}, 
the reader can check directly the following properties: 
\begin{align}
&\|\widehat{\varphi}_h\|_{L^2(0, T; H)}^2 
\leq h\|\varphi_0\|_{H}^2 + 2\|\overline{\varphi}_h\|_{L^2(0, T; H)}^2, \label{rem1} 
\\[2mm] 
&\|\widehat{\varphi}_h\|_{L^{\infty}(0, T; V)} 
= \max\{\|\varphi_{0}\|_{V}, \|\overline{\varphi}_h\|_{L^{\infty}(0, T; V)}\}, 
\label{rem2} 
\\[2mm]
&\|\widehat{\mu}_h\|_{L^2(0, T; H)}^2 
\leq 2\|\overline{\mu}_h\|_{L^2(0, T; H)}^2, \label{rem3} 
\\[2mm] 
&\|\widehat{\mu}_h\|_{L^{\infty}(0, T; H)} 
= \|\overline{\mu}_h\|_{L^{\infty}(0, T; H)}, \label{rem4}
\\[2mm] 
&\|\widehat{u}_h\|_{L^2(0, T; V^{*})}^2 
\leq h\|u_{0}\|_{V^{*}}^2 + 2\|\overline{u}_h\|_{L^2(0, T; V^{*})}^2, \label{rem5} 
\\[2mm] 
&\|\widehat{u}_h\|_{L^{\infty}(0, T; H)} 
= \max\{\|u_{0}\|_{H}, \|\overline{u}_h\|_{L^{\infty}(0, T; H)}\}, \label{rem6} 
\\[2mm] 
&\|\overline{\varphi}_h - \widehat{\varphi}_h\|_{L^2(0, T; H)}^2 
= \frac{h^2}{3}\|\partial_t \widehat{\varphi}_h\|_{L^2(0, T; H)}^2, \label{rem7}
\\[2mm]
&\|\overline{u}_h - \widehat{u}_h\|_{L^2(0, T; V^{*})}^2 
= \frac{h^2}{3}\|\partial_t \widehat{u}_h\|_{L^2(0, T; V^{*})}^2, \label{rem8}
\\[2mm]
&\underline{\varphi}_{h} = \overline{\varphi}_{h} - h \partial_t \widehat{\varphi}_h.  
\label{rem9} 
\end{align}
\end{remark}

\begin{df}
For $n=0, ... , N-1$, a triplet 
$(\theta_{n+1}, \mu_{n+1}, \varphi_{n+1})$ with 
\begin{align*}
\theta_{n+1}\in V,\ \mu_{n+1}, \varphi_{n+1} \in W  
\end{align*}
is called a {\it weak solution} of \ref{Ptauepn} 
if $(\theta_{n+1}, \mu_{n+1}, \varphi_{n+1})$ satisfies 
\begin{align}
&c_{s}(\delta_{h}u_{n}, v)_{H} 
  + (\lambda_{\ep}'(\varphi_{n})\delta_{h}\varphi_{n}, v)_{H} 
  + \eta\int_{\Omega}\nabla\theta_{n+1}\cdot\nabla v 
  + \int_{\Gamma} 
                 \alpha_{\Gamma}\theta_{n+1} v 
\label{dfPtauepnsol1} \\ 
  &= (f_{n+1}, v)_{H} + \int_{\Gamma} 
                 \alpha_{\Gamma} \theta_{\Gamma, n+1} v 
       \quad \mbox{for all}\ v \in V, \notag  
\\[3mm] 
&\delta_{h}\varphi_{n} + h\delta_{h}\mu_{n}
   - \Delta\mu_{n+1} = 0 
     \quad \mbox{a.e.\ on}\ \Omega, \label{dfPtauepnsol2} 
\\[1.5mm]  
&\mu_{n+1} = 
         \tau\delta_{h}\varphi_{n} 
         - \gamma\Delta\varphi_{n+1} \label{dfPtauepnsol3} 
         + \beta_{\ep}(\varphi_{n+1}) 
         + \sigma'(\varphi_{n+1}) 
         - \lambda_{\ep}'(\varphi_{n})\theta_{n+1}    
         \quad \mbox{a.e.\ on}\ \Omega, 
\end{align}
where 
\begin{align}\label{uj.insystem}
u_{j} = \mbox{\rm Ln$_{\ep}$}(\theta_{j}),  
\quad j=0, 1, ... , N.   
\end{align}
\end{df}

\begin{lem}\label{existPtauepn}
Assume that {\rm (C1)-(C7)} hold. Then 
for all $h$ such that 
$$
0< h < \min\left\{\frac{\tau}{2\|\sigma''\|_{L^{\infty}(\mathbb{R})}}, 
              \frac{c_{s}\ep\tau}{2\|\lambda_{\ep}'\|_{L^{\infty}(\mathbb{R})}^2} \right\} 
$$
there exists a unique weak solution of {\rm \ref{Ptauepn}} for $n=0, ..., N-1$. 
\end{lem}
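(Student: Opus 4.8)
The plan is to reduce the coupled discrete system \ref{Ptauepn} to two scalar elliptic sub-problems, solve each one by the theory of maximal monotone coercive operators, and then recover a genuine solution of the full problem through a Banach contraction argument. The two smallness requirements on $h$ are exactly what make the sub-problems monotone and the composed map a contraction. The first move is to eliminate $\mu_{n+1}$: rewriting \eqref{dfPtauepnsol2} as $(I-\Delta)\mu_{n+1}=\mu_n-\delta_h\varphi_n$ with $\partial_\nu\mu_{n+1}=0$ and recognising $I-\Delta$ under the homogeneous Neumann condition as the operator $F$ of \eqref{defF}, one gets the explicit representation $\mu_{n+1}=F^{-1}(\mu_n-\delta_h\varphi_n)$, which lies in $W$ by elliptic regularity once $\varphi_{n+1}\in W$. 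Substituting this into \eqref{dfPtauepnsol3} turns the phase equation into a single nonlinear elliptic equation for $\varphi_{n+1}$, with $\theta_{n+1}$ appearing only as a datum:
\begin{equation*}
\tfrac{1}{h}F^{-1}\varphi_{n+1}+\tfrac{\tau}{h}\varphi_{n+1}-\gamma\Delta\varphi_{n+1}+\beta_\ep(\varphi_{n+1})+\sigma'(\varphi_{n+1})=g,
\end{equation*}
where $g:=F^{-1}\mu_n+\tfrac{1}{h}F^{-1}\varphi_n+\tfrac{\tau}{h}\varphi_n+\lambda_\ep'(\varphi_n)\theta_{n+1}\in H$.

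Step 1 (phase sub-problem). For a fixed $\theta\in H$ I would solve the displayed equation. The operator $\varphi\mapsto \tfrac1h F^{-1}\varphi+\tfrac{\tau}{h}\varphi-\gamma\Delta\varphi+\beta_\ep(\varphi)+\sigma'(\varphi)$ is the sum of the maximal monotone part $-\gamma\Delta$ (with Neumann condition), the positive self-adjoint term $\tfrac1h F^{-1}$, the Lipschitz monotone Yosida map $\beta_\ep$, and the Lipschitz perturbation $\sigma'$. Since $\sigma'$ is $\|\sigma''\|_{L^\infty(\mathbb{R})}$-Lipschitz, the hypothesis $h<\tfrac{\tau}{2\|\sigma''\|_{L^\infty(\mathbb{R})}}$ makes $\tfrac{\tau}{h}\varphi+\sigma'(\varphi)$ strongly monotone with constant $\geq\tfrac{\tau}{2h}$, so the whole operator is maximal monotone and coercive on $H$, hence surjective; elliptic regularity then yields a unique solution $\varphi=\Phi(\theta)\in W$. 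Testing the difference of two such equations (for data $\theta_1,\theta_2$) with $\Phi(\theta_1)-\Phi(\theta_2)$, and noting that only the term $\lambda_\ep'(\varphi_n)(\theta_1-\theta_2)$ survives on the right, gives the Lipschitz bound $\|\Phi(\theta_1)-\Phi(\theta_2)\|_H\leq \tfrac{2h\|\lambda_\ep'\|_{L^\infty(\mathbb{R})}}{\tau}\|\theta_1-\theta_2\|_H$.

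Step 2 (temperature sub-problem). For a fixed $\varphi\in H$ I would solve \eqref{dfPtauepnsol1}, i.e. $\tfrac{c_s}{h}\mathrm{Ln}_\ep(\theta_{n+1})-\eta\Delta\theta_{n+1}=f_{n+1}+\tfrac{c_s}{h}u_n-\tfrac1h\lambda_\ep'(\varphi_n)(\varphi_{n+1}-\varphi_n)$ together with the Robin condition. The associated operator is maximal monotone and coercive on $V$: the bilinear form $\eta\int_\Omega\nabla\theta\cdot\nabla v+\int_\Gamma\alpha_\Gamma\theta v$ is $V$-coercive because $\alpha_*>0$ in (C4), while by Remark \ref{about.Lnep} the map $\mathrm{Ln}_\ep$ is Lipschitz with $\mathrm{Ln}_\ep'\geq\ep$, so $\tfrac{c_s}{h}\mathrm{Ln}_\ep$ is $\tfrac{c_s\ep}{h}$-strongly monotone. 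This gives a unique solution $\theta=\Theta(\varphi)\in V$, and testing the difference of equations with $\Theta(\varphi_1)-\Theta(\varphi_2)$ yields $\|\Theta(\varphi_1)-\Theta(\varphi_2)\|_H\leq\tfrac{\|\lambda_\ep'\|_{L^\infty(\mathbb{R})}}{c_s\ep}\|\varphi_1-\varphi_2\|_H$.

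Step 3 (contraction). Setting $\mathcal{T}:=\Theta\circ\Phi:H\to H$, the two Lipschitz estimates compose to
\begin{equation*}
\|\mathcal{T}(\theta_1)-\mathcal{T}(\theta_2)\|_H\leq \frac{2h\|\lambda_\ep'\|_{L^\infty(\mathbb{R})}^2}{c_s\ep\tau}\,\|\theta_1-\theta_2\|_H,
\end{equation*}
and the second hypothesis $h<\tfrac{c_s\ep\tau}{2\|\lambda_\ep'\|_{L^\infty(\mathbb{R})}^2}$ makes the factor strictly less than $1$. By the Banach fixed point theorem $\mathcal{T}$ has a unique fixed point $\theta_{n+1}$; setting $\varphi_{n+1}:=\Phi(\theta_{n+1})$ and $\mu_{n+1}:=F^{-1}(\mu_n-\delta_h\varphi_n)$ produces a weak solution, and the uniqueness of each sub-problem shows that every weak solution arises this way, which gives uniqueness of the triple. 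I expect the genuine obstacle to be precisely the non-monotone coupling between $\theta$ and $\varphi$ through the terms $\lambda_\ep'(\varphi_n)\theta_{n+1}$ and the latent-heat source in the temperature equation: the full operator is not monotone, and only the smallness of $h$ (contraction constant below $1$) closes the argument, while the companion bound $h<\tfrac{\tau}{2\|\sigma''\|_{L^\infty(\mathbb{R})}}$ is what guarantees that the phase sub-problem is monotone in the first place.
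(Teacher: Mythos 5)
Your proposal is correct and follows essentially the same route as the paper: reduction of the $\mu$-equation via $(1-\Delta)^{-1}$, solvability of the two elliptic sub-problems by monotone--coercive operator theory (the paper's Lemmas 4.1 and 4.2, with the same role for the bound $h<\tau/(2\|\sigma''\|_{L^\infty(\mathbb{R})})$), the same two Lipschitz constants $2h\|\lambda_\ep'\|_{L^\infty(\mathbb{R})}/\tau$ and $\|\lambda_\ep'\|_{L^\infty(\mathbb{R})}/(c_s\ep)$, and the Banach fixed point for the composition with contraction factor $2h\|\lambda_\ep'\|_{L^\infty(\mathbb{R})}^2/(c_s\ep\tau)$. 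The only (immaterial) difference is that you iterate on the temperature ($\Theta\circ\Phi$) whereas the paper iterates on the phase variable ($\mathcal{B}\circ\mathcal{A}$).
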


\vspace{10pt}


\section{Existence of time discrete solutions}\label{Sec3}

In this section we will prove Lemma \ref{existPtauepn}.  
\begin{lem}\label{ellipticeq1}
For all $g^{*} \in V^{*}$ and all $h > 0$ there exists 
a unique function $\theta \in V$ satisfying the identity 
$$
c_{s} \int_{\Omega} \mbox{\rm Ln$_{\ep}$}(\theta)\,v  
   + \eta h\int_{\Omega} \nabla\theta\cdot\nabla v 
   + h\int_{\Gamma} \alpha_{\Gamma} \theta v  
= \langle g^{*}, v \rangle_{V^{*}, V} 
$$
for all $v \in V$. 
\end{lem}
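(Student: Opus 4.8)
The plan is to recast the identity as an operator equation $A\theta = g^{*}$ in $V^{*}$ and to apply the theory of monotone operators. I define $A : V \to V^{*}$ by
\[
\langle A\theta, v \rangle_{V^{*}, V} := c_{s}\int_{\Omega}\mbox{\rm Ln$_{\ep}$}(\theta)\,v + \eta h\int_{\Omega}\nabla\theta\cdot\nabla v + h\int_{\Gamma}\alpha_{\Gamma}\theta v, \quad v \in V .
\]
First I would verify that $A$ is well defined and (Lipschitz) continuous from $V$ into $V^{*}$: since $\mbox{\rm Ln$_{\ep}$}$ is Lipschitz continuous (Remark \ref{about.Lnep}), the superposition $\theta \mapsto \mbox{\rm Ln$_{\ep}$}(\theta)$ maps $H$ continuously into $H \hookrightarrow V^{*}$; the gradient term is the usual bounded bilinear form; and the boundary term is controlled through the continuity of the trace $V \hookrightarrow L^{2}(\Gamma)$ together with $\alpha_{\Gamma}\in L^{\infty}(\Gamma)$ from (C4). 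In particular $A$ is hemicontinuous.

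Next I would establish strong monotonicity. For $\theta_{1}, \theta_{2}\in V$, exploiting $\mbox{\rm Ln$_{\ep}$}'(r)\geq\ep$ (Remark \ref{about.Lnep}) and $\alpha_{\Gamma}\geq\alpha_{*}>0$ (C4),
\[
\langle A\theta_{1}-A\theta_{2}, \theta_{1}-\theta_{2}\rangle_{V^{*},V} \geq c_{s}\ep\,\|\theta_{1}-\theta_{2}\|_{H}^{2} + \eta h\int_{\Omega}|\nabla(\theta_{1}-\theta_{2})|^{2} \geq C_{0}\,\|\theta_{1}-\theta_{2}\|_{V}^{2},
\]
with $C_{0}:=\min\{c_{s}\ep, \eta h\}>0$. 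This at once gives injectivity, hence uniqueness of the solution, and, writing $\langle A\theta, \theta\rangle = \langle A\theta - A0, \theta\rangle + \langle A0, \theta\rangle$ and noting that $A0$ is bounded in $V^{*}$, the coercivity $\langle A\theta, \theta\rangle_{V^{*},V}/\|\theta\|_{V}\to+\infty$ as $\|\theta\|_{V}\to\infty$.

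Surjectivity then follows from the Browder--Minty theorem: a hemicontinuous, monotone, coercive operator from the reflexive space $V$ into its dual $V^{*}$ is onto. Hence for every $g^{*}\in V^{*}$ there is a (by strict monotonicity unique) $\theta\in V$ with $A\theta = g^{*}$, which is exactly the asserted identity. Equivalently, one may proceed variationally: with $\Phi(r):=\int_{0}^{r}\mbox{\rm Ln$_{\ep}$}(s)\,ds$ a strongly convex $C^{1}$ primitive, the functional
\[
J(\theta):=c_{s}\int_{\Omega}\Phi(\theta) + \frac{\eta h}{2}\int_{\Omega}|\nabla\theta|^{2} + \frac{h}{2}\int_{\Gamma}\alpha_{\Gamma}\theta^{2} - \langle g^{*}, \theta\rangle_{V^{*},V}
\]
is strictly convex, coercive (using $\Phi(r)\geq\Phi(0)+\mbox{\rm Ln$_{\ep}$}(0)r+\frac{\ep}{2}r^{2}$) and weakly lower semicontinuous on $V$, so it admits a unique minimizer whose Euler--Lagrange identity is the one in the statement. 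The only points needing slight care are the boundary trace term and the at most quadratic growth of $\Phi$ (equivalently, of the zeroth-order nonlinear term), both immediate from (C4) and the Lipschitz bound on $\mbox{\rm Ln$_{\ep}$}$; there is no genuine obstacle, as this is a standard strongly monotone elliptic problem.
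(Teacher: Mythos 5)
Your proof is correct and follows essentially the same route as the paper: both recast the identity as $A\theta=g^{*}$ for a monotone, continuous (hemicontinuous), coercive operator $V\to V^{*}$ and invoke the surjectivity theorem for such operators, the only cosmetic difference being that you extract strong monotonicity from the $c_{s}\ep\|\cdot\|_{H}^{2}$ contribution of $\mbox{\rm Ln$_{\ep}$}$ plus the gradient term, whereas the paper discards the $\mbox{\rm Ln$_{\ep}$}$ term and instead combines the gradient and boundary terms via the norm equivalence \eqref{keyineq}. Both variants are valid, and your supplementary variational argument is a correct (if redundant) alternative.
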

\begin{proof}
We define $\Phi : V \to V^{*}$ by 
$$
\langle \Phi\theta, v \rangle_{V^{*}, V} 
:= c_{s} \int_{\Omega} \mbox{\rm Ln$_{\ep}$}(\theta)\,v  
   + \eta h\int_{\Omega} \nabla\theta\cdot\nabla v 
   + h\int_{\Gamma} \alpha_{\Gamma} \theta v  
\quad \mbox{for}\ \theta, v \in V.   
$$
Then this operator $\Phi : V \to V^{*}$ is monotone, continuous and coercive 
for all $h > 0$. 
Indeed, 
note that there exist constants $C^{*}, C_{*} > 0$ such that 
\begin{align}\label{keyineq}
C_{*}\bigl(\|\nabla z\|_{H}^2 + \|z\|_{L^2(\Gamma)}^2 \bigr) 
\leq \|z\|_{V}^2 
\leq C^{*}\bigl(\|\nabla z\|_{H}^2 + \|z\|_{L^2(\Gamma)}^2 \bigr) 
\end{align}
for all $z \in V$ (see, e.g., \cite[p.\ 20]{N1967}).  
The first inequality in \eqref{keyineq} can be obtained by the trace theorem.   
Hence we see from (C4), \eqref{keyineq} and Remark \ref{about.Lnep}  
that 
\begin{align*}
\langle\Phi\theta-\Phi\overline{\theta}, \theta-\overline{\theta}\rangle_{V^{*}, V} 
&= c_{s} \int_{\Omega} 
          (\mbox{\rm Ln$_{\ep}$}(\theta)-\mbox{\rm Ln$_{\ep}$}\overline{\theta})
                                                                             (\theta-\overline{\theta})
   + \eta h\int_{\Omega} |\nabla(\theta-\overline{\theta})|^2 
   + h\int_{\Gamma} \alpha_{\Gamma} (\theta-\overline{\theta})^2 
\\
&\geq \min\{\eta, \alpha_{*}\}h
               \Bigl(\int_{\Omega} |\nabla (\theta-\overline{\theta})|^2 
                                         + \int_{\Gamma} |\theta-\overline{\theta}|^2 \Bigr) 
\\
&\geq \min\{\eta, \alpha_{*}\}\frac{h}{C^{*}}\|\theta-\overline{\theta}\|_{V}^2,     
\end{align*} 
\begin{align*}
&|\langle \Phi\theta - \Phi\overline{\theta}, v \rangle_{V^{*}, V}| 
\\ 
&= \left| 
   c_{s} \int_{\Omega} 
       (\mbox{\rm Ln$_{\ep}$}(\theta)-\mbox{\rm Ln$_{\ep}$}\overline{\theta})v
   + \eta h\int_{\Omega} \nabla(\theta-\overline{\theta}) \cdot \nabla v   
   + h\int_{\Gamma} \alpha_{\Gamma} (\theta-\overline{\theta})v 
     \right|
\\ 
&\leq c_{s}\|\mbox{\rm Ln$_{\ep}$}'\|_{L^{\infty}(\mathbb{R})}
                         \|\theta-\overline{\theta}\|_{H}\|v\|_{H} 
         + \eta h \|\nabla(\theta-\overline{\theta})\|_{H}\|\nabla v\|_{H} 
         + \alpha^{*}h 
                \|\theta-\overline{\theta}\|_{L^2(\Gamma)}\|v\|_{L^2(\Gamma)} 
\\ 
&\leq \left(c_{s}\|\mbox{\rm Ln$_{\ep}$}'\|_{L^{\infty}(\mathbb{R})} 
                + \frac{(\eta + \alpha^{*})h}{C_{*}} \right)
                                     \|\theta-\overline{\theta}\|_{V}\|v\|_{V} 
\end{align*}
and 
\begin{align*}
\langle \Phi\theta - \mbox{\rm Ln$_{\ep}$}(0), \theta \rangle_{V^{*}, V}  
&=  c_{s} \int_{\Omega} 
             (\mbox{\rm Ln$_{\ep}$}(\theta)-\mbox{\rm Ln$_{\ep}$}(0))(\theta-0)  
   + \eta h\int_{\Omega} |\nabla\theta|^2  
   + h\int_{\Gamma} \alpha_{\Gamma} |\theta|^2 
\\ 
&\geq \min\{\eta, \alpha_{*}\}h
               \Bigl(\int_{\Omega} |\nabla \theta|^2 + \int_{\Gamma} |\theta|^2 \Bigr) 
\\ 
&\geq \min\{\eta, \alpha_{*}\}\frac{h}{C^{*}}\|\theta\|_{V}^2            
\end{align*}
for all $\theta, \overline{\theta}, v \in V$. 
Thus the operator $\Phi : V \to V^{*}$ is surjective for all $h > 0$ 
(see, e.g., \cite[p.\ 37]{Barbu2}),  
which leads to Lemma \ref{ellipticeq1}.  
\end{proof}

\begin{lem}\label{ellipticeq2}
For all $g \in H$ 
and all $h \in \left(0, \frac{\tau}{\|\sigma''\|_{L^{\infty}(\mathbb{R})}} \right)$ 
there exists 
a unique solution $\varphi \in W$ of the equation 
$\tau\varphi + (1-\Delta)^{-1}\varphi - \gamma h \Delta\varphi 
+ h\beta_{\ep}(\varphi) + h\sigma'(\varphi) = g$, 
where $(1-\Delta)^{-1}$ is the inverse operator of 
$v \in W \mapsto v-\Delta v \in H$.  
\end{lem}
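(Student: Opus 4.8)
The plan is to recast the equation as an operator identity $(\mathcal{A}+\mathcal{B})\varphi = g$ on the Hilbert space $H$ and to invoke the surjectivity of maximal monotone coercive operators (see, e.g., \cite[p.\ 37]{Barbu2}), exactly as was done for Lemma~\ref{ellipticeq1}. Here $\mathcal{A} := -\gamma h \Delta$ denotes the (scaled) Neumann Laplacian, regarded as an unbounded operator on $H$ with domain $D(\mathcal{A}) = W$, while $\mathcal{B} : H \to H$ gathers the remaining, everywhere defined terms,
\begin{equation*}
\mathcal{B}\varphi := \tau\varphi + (1-\Delta)^{-1}\varphi + h\beta_{\ep}(\varphi) + h\sigma'(\varphi), \quad \varphi \in H.
\end{equation*}
A solution $\varphi \in D(\mathcal{A}) = W$ of $(\mathcal{A}+\mathcal{B})\varphi = g$ is then precisely a solution in the sense of the statement.

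First I would recall that $\mathcal{A}$, being a positive multiple of the Neumann Laplacian with domain $W$, is maximal monotone on $H$ and satisfies $\mathcal{A}0 = 0$ together with $(\mathcal{A}\varphi,\varphi)_H \geq 0$. Next I would check that $\mathcal{B}$ is Lipschitz continuous on $H$: the operator $(1-\Delta)^{-1} : H \to H$ is bounded and linear, $\beta_{\ep}$ is Lipschitz as a Yosida approximation, and $\sigma'$ is Lipschitz by~(C3); hence $\mathcal{B}$ is everywhere defined, bounded and hemicontinuous. The decisive point --- and the place where the smallness condition $h < \tau/\|\sigma''\|_{L^{\infty}(\mathbb{R})}$ is used --- is the strong monotonicity of $\mathcal{B}$. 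Estimating the four contributions for $\varphi, \psi \in H$, one has $(\tau\varphi - \tau\psi, \varphi-\psi)_H = \tau\|\varphi-\psi\|_H^2$; setting $w := (1-\Delta)^{-1}(\varphi-\psi) \in W$ and using Green's formula with the homogeneous Neumann condition gives $((1-\Delta)^{-1}(\varphi-\psi), \varphi-\psi)_H = (w, w - \Delta w)_H = \|w\|_V^2 \geq 0$; the term in $\beta_{\ep}$ is nonnegative by monotonicity; and the pointwise bound $(\sigma'(r)-\sigma'(s))(r-s) \geq -\|\sigma''\|_{L^{\infty}(\mathbb{R})}(r-s)^2$ yields $h(\sigma'(\varphi)-\sigma'(\psi), \varphi-\psi)_H \geq -h\|\sigma''\|_{L^{\infty}(\mathbb{R})}\|\varphi-\psi\|_H^2$. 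Summing, I would obtain
\begin{equation*}
(\mathcal{B}\varphi - \mathcal{B}\psi, \varphi-\psi)_H \geq \kappa\|\varphi-\psi\|_H^2, \quad \kappa := \tau - h\|\sigma''\|_{L^{\infty}(\mathbb{R})} > 0.
\end{equation*}

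Finally, since $\mathcal{A}$ is maximal monotone and $\mathcal{B}$ is monotone, hemicontinuous and everywhere defined, the perturbation theorem for monotone operators (cf.~\cite{Barbu2}) ensures that $\mathcal{A}+\mathcal{B}$ is maximal monotone on $H$. Combining $(\mathcal{A}\varphi,\varphi)_H \geq 0$ with the strong monotonicity of $\mathcal{B}$ shows that $\mathcal{A}+\mathcal{B}$ is coercive, hence surjective, so for every $g \in H$ there exists $\varphi \in D(\mathcal{A}+\mathcal{B}) = W$ with $(\mathcal{A}+\mathcal{B})\varphi = g$; uniqueness is immediate from the strong monotonicity. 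I expect the main obstacle to be the strong monotonicity estimate above, since it is the only step where the hypothesis $h < \tau/\|\sigma''\|_{L^{\infty}(\mathbb{R})}$ is genuinely needed in order to absorb the possibly non-monotone contribution $h\sigma'$ into the linear term $\tau\varphi$; a secondary technical care is the correct treatment of the nonlocal term $(1-\Delta)^{-1}$ and the verification of the hypotheses of the sum theorem.
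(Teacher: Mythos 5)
Your argument is correct, but it runs along a different track from the paper's. The paper treats the whole left-hand side as a single operator $\Psi : V \to V^{*}$, defined variationally by $\langle \Psi\varphi, v\rangle_{V^{*},V} = \tau(\varphi,v)_{H} + ((1-\Delta)^{-1}\varphi,v)_{H} + \gamma h\int_{\Omega}\nabla\varphi\cdot\nabla v + h(\beta_{\ep}(\varphi),v)_{H} + h(\sigma'(\varphi),v)_{H}$, verifies that $\Psi$ is monotone, continuous and coercive on $V$ (with coercivity constant $\min\{\tau - h\|\sigma''\|_{L^{\infty}(\mathbb{R})}, \gamma h\}$, so the gradient term is what buys coercivity in the $V$-norm), concludes surjectivity onto $V^{*}$ by the Browder--Minty theorem, and only then upgrades the weak solution to $W$ by elliptic regularity. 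You instead keep the Laplacian as an unbounded maximal monotone operator on $H$ with domain $W$ and treat the remaining terms as an everywhere-defined Lipschitz, strongly monotone perturbation $\mathcal{B}$, invoking the sum theorem for maximal monotone operators and the surjectivity of coercive maximal monotone operators on $H$. Your route makes the $W$-regularity automatic (it is the domain of $\mathcal{A}+\mathcal{B}$) and isolates more cleanly where the restriction $h < \tau/\|\sigma''\|_{L^{\infty}(\mathbb{R})}$ enters, namely in absorbing the non-monotone part of $h\sigma'$ into $\tau\varphi$; the strong monotonicity in $H$ also gives uniqueness at once, exactly as in the paper. The price is that you must quote two slightly heavier abstract results (the perturbation theorem and the surjectivity of coercive maximal monotone operators) in place of the single surjectivity criterion for monotone, continuous, coercive operators $V \to V^{*}$ plus standard Neumann elliptic regularity that the paper uses. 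All the individual estimates you give --- the identity $((1-\Delta)^{-1}u,u)_{H} = \|w\|_{V}^{2}$ with $w=(1-\Delta)^{-1}u$, the monotonicity of $\beta_{\ep}$, and the lower bound for the $\sigma'$-difference in terms of the Lipschitz constant of $\sigma'$ --- are sound, so I see no gap.
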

\begin{proof}
We define $\Psi : V \to V^{*}$ by 
$$
\langle \Psi\varphi, v \rangle_{V^{*}, V} 
:= \tau(\varphi, v)_{H} 
   + ((1-\Delta)^{-1}\varphi, v)_{H} 
   + \gamma h \int_{\Omega} \nabla \varphi \cdot \nabla v 
   + h(\beta_{\ep}(\varphi), v)_{H} 
   + h(\sigma'(\varphi), v)_{H} 
$$
for $\varphi, v \in V$. 
Then this operator $\Psi : V \to V^{*}$ is monotone, continuous and coercive 
for all $h \in \left(0, \frac{\tau}{\|\sigma''\|_{L^{\infty}(\mathbb{R})}} \right)$. 
Indeed, 
Lipschitz continuity of $\beta_{\ep}$ and $(1-\Delta)^{-1}$ with 
Lipschitz constants $1/\ep$ and $1$, respectively, 
the monotonicity of $\beta_{\ep}$ and $(1-\Delta)^{-1}$, 
and Remark \ref{Bhatepzero}  
yield that 
\begin{align*}
&\langle\Psi\varphi-\Psi\overline{\varphi}, 
                                         \varphi-\overline{\varphi}\rangle_{V^{*}, V} 
\geq \min\{\tau - \|\sigma''\|_{L^{\infty}(\mathbb{R})}h, \gamma h\}
                                                           \|\varphi-\overline{\varphi}\|_{V}^2,      
\\[3.5mm] 
&|\langle \Psi\varphi-\Psi\overline{\varphi}, v \rangle_{V^{*}, V}| 
\leq \left(\tau + 1 + \gamma h + \frac{h}{\ep} 
                           + \|\sigma''\|_{L^{\infty}(\mathbb{R})}h \right)
                                     \|\varphi-\overline{\varphi}\|_{V}\|v\|_{V}, 
\\[3.5mm] 
&\langle\Psi\varphi-h\sigma'(0), \varphi\rangle_{V^{*}, V} 
\geq \min\{\tau - \|\sigma''\|_{L^{\infty}(\mathbb{R})}h, \gamma h\}
                                                                                   \|\varphi\|_{V}^2      
\end{align*}
for all $\varphi, \overline{\varphi}, v \in V$. 
Therefore the operator $\Psi : V \to V^{*}$ is surjective 
for all $h \in \left(0, \frac{\tau}{\|\sigma''\|_{L^{\infty}(\mathbb{R})}} \right)$ 
(see, e.g., \cite[p.\ 37]{Barbu2}), 
and hence 
we can conclude from 
the elliptic regularity theory that 
Lemma \ref{ellipticeq2} holds. 
\end{proof}

\begin{prlem3.2}
The system \eqref{dfPtauepnsol1}-\eqref{uj.insystem} 
can be written as 
\begin{align}
&c_{s}(\mbox{\rm Ln$_{\ep}$}(\theta_{n+1}), v)_{H} 
  + \eta h\int_{\Omega}\nabla\theta_{n+1}\cdot\nabla v 
  + h\int_{\Gamma} 
           \alpha_{\Gamma}\theta_{n+1}v 
\label{writtendfPtauepnsol1} \\ 
  &= h(f_{n+1}, v)_{H} 
       + c_{s}(\mbox{\rm Ln$_{\ep}$}(\theta_{n}), v)_{H} 
       + h\int_{\Gamma}\alpha_{\Gamma}\theta_{\Gamma, n+1}v  
\notag  \\ 
  &\hspace{6cm} 
   + \bigl(\lambda_{\ep}'(\varphi_{n})(\varphi_{n}-\varphi_{n+1}), v\bigr)_{H} 
       \quad \mbox{for all}\ v \in V, \notag  
\\[5mm] 
&\mu_{n+1} 
= (1-\Delta)^{-1}\mu_{n} 
  + \frac{1}{h}(1-\Delta)^{-1}\varphi_{n} 
  - \frac{1}{h}(1-\Delta)^{-1}\varphi_{n+1} 
     \quad \mbox{a.e.\ on}\ \Omega, \label{writtendfPtauepnsol2} 
\\[3.5mm]  
&\tau\varphi_{n+1} + (1-\Delta)^{-1}\varphi_{n+1} 
   - \gamma h\Delta\varphi_{n+1} 
   + h\beta_{\ep}(\varphi_{n+1}) 
   + h\sigma'(\varphi_{n+1})
\label{writtendfPtauepnsol3} 
\\ \notag
&= \tau\varphi_{n} + h(1-\Delta)^{-1}\mu_{n} 
     + (1-\Delta)^{-1}\varphi_{n} 
     + h\lambda_{\ep}'(\varphi_{n})\theta_{n+1}    
         \quad \mbox{a.e.\ on}\ \Omega   
\end{align}
for $n=0, ... , N-1$. 
To prove Lemma \ref{existPtauepn} 
it suffices to establish existence and uniqueness of solutions to 
the system \eqref{writtendfPtauepnsol1}-\eqref{writtendfPtauepnsol3} 
in the case that $n=0$, for a general $\mu_{0} \in H$. 
Let $h \in \left(0, \frac{\tau}{2\|\sigma''\|_{L^{\infty}(\mathbb{R})}} \right)$. 
Then Lemma \ref{ellipticeq1} implies that  
for all $\varphi \in H$ there exists a unique function $\overline{\theta} \in V$ 
such that 
\begin{align}\label{formapA}
&c_{s}(\mbox{\rm Ln$_{\ep}$}(\overline{\theta}), v)_{H} 
  + \eta h\int_{\Omega}\nabla\overline{\theta}\cdot\nabla v 
  + h\int_{\Gamma} 
           \alpha_{\Gamma}\overline{\theta}v 
\\ \notag 
  &= h(f_{1}, v)_{H} 
       + c_{s}(\mbox{\rm Ln$_{\ep}$}(\theta_{0}), v)_{H} 
       + h\int_{\Gamma}\alpha_{\Gamma}\theta_{\Gamma, 1}v  
\\ \notag  
  &\hspace{6cm} 
   + \bigl(\lambda_{\ep}'(\varphi_{0})(\varphi_{0}-\varphi), v\bigr)_{H} 
       \quad \mbox{for all}\ v \in V. 
\end{align}
Also, we infer from Lemma \ref{ellipticeq2} that 
for all $\theta \in H$ there exists a unique function $\overline{\varphi} \in W$ 
satisfying 
\begin{align}\label{formapB}
&\tau\overline{\varphi} + (1-\Delta)^{-1}\overline{\varphi} 
   - \gamma h\Delta\overline{\varphi} 
   + h\beta_{\ep}(\overline{\varphi}) 
   + h\sigma'(\overline{\varphi})
\\ \notag
&= \tau\varphi_{0} + h(1-\Delta)^{-1}\mu_{0} 
     + (1-\Delta)^{-1}\varphi_{0} 
     + h\lambda_{\ep}'(\varphi_{0})\theta     
         \quad \mbox{a.e.\ on}\ \Omega.    
\end{align}
Thus we can define ${\cal A} : H \to H$, ${\cal B} : H \to H$ 
and ${\cal S} : H \to H$ as 
$$
{\cal A}(\varphi) = \overline{\theta},\ 
{\cal B}(\theta) = \overline{\varphi} 
\quad \mbox{for}\ \varphi, \theta \in H 
$$
and 
$$
{\cal S} = {\cal B} \circ {\cal A},   
$$
respectively. 
We are going to show that, for suitable value of $h$, 
${\cal S}$ is a contraction mapping in $H$. 
Now we let $\varphi, \widetilde{\varphi} \in H$. 
Then, since we can deduce from \eqref{formapA} that 
\begin{align*}
&c_{s}\bigl(\mbox{\rm Ln$_{\ep}$}({\cal A}(\varphi)) 
              - \mbox{\rm Ln$_{\ep}$}({\cal A}(\widetilde{\varphi})), 
                                   {\cal A}(\varphi) - {\cal A}(\widetilde{\varphi}) \bigr)_{H} 
\\ \notag 
&+ \eta h\int_{\Omega}|\nabla({\cal A}(\varphi) - {\cal A}(\widetilde{\varphi}))|^2 
  + h\int_{\Gamma} 
           \alpha_{\Gamma}|{\cal A}(\varphi) - {\cal A}(\widetilde{\varphi})|^2 
\\ \notag 
& = -\bigl(\lambda_{\ep}'(\varphi_{0})(\varphi-\widetilde{\varphi}), 
                                {\cal A}(\varphi) - {\cal A}(\widetilde{\varphi})\bigr)_{H} 
\\ \notag 
&\leq \|\lambda_{\ep}'\|_{L^{\infty}(\mathbb{R})}
                        \|\varphi-\widetilde{\varphi}\|_{H}
                                 \|{\cal A}(\varphi) - {\cal A}(\widetilde{\varphi})\|_{H}, 
\end{align*} 
it follows from Remark \ref{about.Lnep} and (C4) that
\begin{align}\label{esti1forfixthm}
c_{s}\ep\|{\cal A}(\varphi) - {\cal A}(\widetilde{\varphi})\|_{H} 
\leq \|\lambda_{\ep}'\|_{L^{\infty}(\mathbb{R})}\|\varphi-\widetilde{\varphi}\|_{H}. 
\end{align}
Moreover, we have from \eqref{formapB} that 
\begin{align*}
&\tau\|{\cal S}(\varphi)-{\cal S}(\widetilde{\varphi})\|_{H}^2 
+ \bigl( (1-\Delta)^{-1}({\cal S}(\varphi)-{\cal S}(\widetilde{\varphi})), 
                                 {\cal S}(\varphi)-{\cal S}(\widetilde{\varphi}) \bigr)_{H} 
\\ \notag 
&+ \gamma h \|\nabla({\cal S}(\varphi)-{\cal S}(\widetilde{\varphi}))\|_{H}^2 
+ h \bigl( \beta_{\ep}({\cal S}(\varphi))-\beta_{\ep}({\cal S}(\widetilde{\varphi})), 
                                      {\cal S}(\varphi)-{\cal S}(\widetilde{\varphi}) \bigr)_{H} 
\\ \notag 
&+ h \bigl( \sigma'({\cal S}(\varphi))-\sigma'({\cal S}(\widetilde{\varphi})), 
                                      {\cal S}(\varphi)-{\cal S}(\widetilde{\varphi}) \bigr)_{H} 
\\ \notag 
&= h\bigl(\lambda_{\ep}'(\varphi_{0})
                      ({\cal A}(\varphi) - {\cal A}(\widetilde{\varphi})), 
                                {\cal S}(\varphi)-{\cal S}(\widetilde{\varphi})\bigr)_{H} 
\\ \notag 
&\leq h\|\lambda_{\ep}'\|_{L^{\infty}(\mathbb{R})}
               \|{\cal A}(\varphi) - {\cal A}(\widetilde{\varphi})\|_{H}
                                 \|{\cal S}(\varphi)-{\cal S}(\widetilde{\varphi})\|_{H}, 
\end{align*}
and hence combining the monotonicity of $(1-\Delta)^{-1}$ and $\beta_{\ep}$, 
the Lipschitz continuity of $\sigma'$  
with $0 < h < \frac{\tau}{2\|\sigma''\|_{L^{\infty}(\mathbb{R})}}$,  
leads to the inequality  
\begin{align}\label{esti2forfixthm}
\frac{\tau}{2}\|{\cal S}(\varphi)-{\cal S}(\widetilde{\varphi})\|_{H} 
\leq h\|\lambda_{\ep}'\|_{L^{\infty}(\mathbb{R})}
               \|{\cal A}(\varphi) - {\cal A}(\widetilde{\varphi})\|_{H}. 
\end{align}
Therefore we see from \eqref{esti1forfixthm} and \eqref{esti2forfixthm} that 
$$
\|{\cal S}(\varphi)-{\cal S}(\widetilde{\varphi})\|_{H} 
\leq \frac{2\|\lambda_{\ep}'\|_{L^{\infty}(\mathbb{R})}^2}{c_{s}\ep\tau}h
                                                          \|\varphi - \widetilde{\varphi}\|_{H}. 
$$
Then, letting 
$h \in \left(0, 
             \min\left\{\frac{\tau}{2\|\sigma''\|_{L^{\infty}(\mathbb{R})}}, 
              \frac{c_{s}\ep\tau}{2\|\lambda_{\ep}'\|_{L^{\infty}(\mathbb{R})}^2} \right\} 
    \right)$,  
the Banach fixed-point theorem 
allows us to infer that 
there exists a unique function $\varphi_{1} \in H$ 
satisfying $\varphi_{1} = {\cal S}(\varphi_{1}) \in W$. 
Hence, putting 
$$
\theta_{1} := {\cal A}(\varphi_{1}) \in V 
$$
and 
$$
\mu_{1} 
:= (1-\Delta)^{-1}\mu_{0} 
  + \frac{1}{h}(1-\Delta)^{-1}\varphi_{0} 
  - \frac{1}{h}(1-\Delta)^{-1}\varphi_{1},  
$$
we can obtain \eqref{writtendfPtauepnsol1}-\eqref{writtendfPtauepnsol3} 
in the case that $n=0$.    
Thus, by extending the argument to any $n$, we can verify that for all 
$h \in \left(0, 
             \min\left\{\frac{\tau}{2\|\sigma''\|_{L^{\infty}(\mathbb{R})}}, 
              \frac{c_{s}\ep\tau}{2\|\lambda_{\ep}'\|_{L^{\infty}(\mathbb{R})}^2} \right\} 
    \right)$ 
there exists a unique weak solution of {\rm \ref{Ptauepn}} for $n=0, ..., N-1$.   
\qed
\end{prlem3.2}

\vspace{10pt}


\section{Estimates for \ref{Ptaueph} and passage to the limit as $h\searrow0$} 
\label{Sec4}

In this section we will prove Lemma \ref{existPtauep}. 
We will establish estimates for \ref{Ptaueph} 
to derive existence for \ref{Ptauep}  
by passing to the limit in \ref{Ptaueph} as $h\searrow0$.  
\begin{lem}\label{firstestiPtaueph}
There exist constants $\ep_{1} \in (0, 1]$   
and $C>0$ depending on the data 
such that 
\begin{align*} 
&\ep\|\overline{\theta}_{h}\|_{L^{\infty}(0, T; H)}^2 
  + \ep\|\ln_{\ep}(\overline{\theta}_{h})\|_{L^{\infty}(0, T; H)}^2 
  + \|\rho_{\ep}(\overline{\theta}_{h})\|_{L^{\infty}(0, T; L^1(\Omega))} 
\\ \notag 
&+ \|\overline{\theta}_{h}\|_{L^2(0, T; V)}^2 
  + \|\partial_{t}\hat{\varphi}_{h} 
                        + h\partial_{t}\hat{\mu}_{h}\|_{L^2(0, T; V_{0}^{*})}^2 
  + \tau\|\partial_{t}\hat{\varphi}_{h}\|_{L^2(0, T; H)}^2 
  + \|\overline{\varphi}_{h}\|_{L^{\infty}(0, T; V)}^2 
\\ \notag 
&+ h\|\overline{\mu}_{h}\|_{L^{\infty}(0, T; H)}^2  
  + h^2\|\partial_{t}\hat{\mu}_{h}\|_{L^2(0, T; H)}^2  
  + \|\hat{\beta}_{\ep}(\overline{\varphi}_{h})\|_{L^{\infty}(0, T; L^1(\Omega))} 
\leq C  
\end{align*}
for all $h$ with 
$$ 
0 <h < h_{0} := \min\left\{1, \frac{\tau}{2\|\sigma''\|_{L^{\infty}(\mathbb{R})}}, 
               \frac{c_{s}\ep\tau}{2\|\lambda_{\ep}'\|_{L^{\infty}(\mathbb{R})}^2}, 
                            \frac{\gamma}{8\|\sigma''\|_{L^{\infty}(\mathbb{R})}^2} \right\},    
$$ 
$\ep \in (0, \ep_{1})$ and $\tau>0$.    
\end{lem}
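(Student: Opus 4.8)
The plan is to test the three equations of the discrete system \eqref{dfPtauepnsol1}--\eqref{dfPtauepnsol3} with multipliers chosen so that the thermomechanical coupling cancels, then to sum over the time index after inserting the appropriate convexity inequalities. Concretely, I would test the entropy equation \eqref{dfPtauepnsol1} with $v=\theta_{n+1}$, take the $H$-inner product of the chemical-potential identity \eqref{dfPtauepnsol3} with $\delta_h\varphi_n$, and the $H$-inner product of \eqref{dfPtauepnsol2} with $\mu_{n+1}$. After integrating the Laplacians by parts (using the Neumann conditions, which turn $-\gamma\Delta\varphi_{n+1}$ into $\gamma(\nabla\varphi_{n+1},\nabla\delta_h\varphi_n)_H$ and $-\Delta\mu_{n+1}$ into $\|\nabla\mu_{n+1}\|_H^2$) and eliminating $(\delta_h\varphi_n,\mu_{n+1})_H$ between the last two, the two occurrences of the coupling term $(\lambda_\ep'(\varphi_n)\delta_h\varphi_n,\theta_{n+1})_H$ cancel exactly — this is precisely why the scheme freezes $\lambda_\ep'$ at the explicit value $\varphi_n$. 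What remains is the single identity
\begin{align*}
&c_s(\delta_h u_n,\theta_{n+1})_H + \eta\|\nabla\theta_{n+1}\|_H^2 + \int_\Gamma \alpha_\Gamma \theta_{n+1}^2 + \tau\|\delta_h\varphi_n\|_H^2 + \gamma(\nabla\varphi_{n+1},\nabla\delta_h\varphi_n)_H \\
&\quad + (\beta_\ep(\varphi_{n+1}),\delta_h\varphi_n)_H + (\sigma'(\varphi_{n+1}),\delta_h\varphi_n)_H + h(\delta_h\mu_n,\mu_{n+1})_H + \|\nabla\mu_{n+1}\|_H^2 \\
&= (f_{n+1},\theta_{n+1})_H + \int_\Gamma \alpha_\Gamma \theta_{\Gamma,n+1}\theta_{n+1},
\end{align*}
which I would multiply by $h$ and sum over $n=0,\dots,m-1$, finally taking the maximum over $m$ for the $L^\infty$-in-time quantities.

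The crux is the entropy term. Writing $u_n=\mathrm{Ln}_\ep(\theta_n)=\ep\theta_n+\ln_\ep\theta_n$ and using the identity $\ln_\ep=\ln\rho_\ep$ from Remark~\ref{def.of.rhoep}, I would substitute $s_j=\rho_\ep(\theta_j)$, so that $\theta_j=s_j+\ep\ln s_j$, and combine the concavity of $\ln$ (giving $(\ln s_{n+1}-\ln s_n)s_{n+1}\ge s_{n+1}-s_n$) with the elementary inequality $(b-a)b\ge\tfrac12(b^2-a^2)$ to obtain the pointwise bound
\[
(u_{n+1}-u_n)\theta_{n+1} \ge \bigl(\rho_\ep(\theta_{n+1})-\rho_\ep(\theta_n)\bigr) + \tfrac{\ep}{2}\bigl((\ln_\ep\theta_{n+1})^2-(\ln_\ep\theta_n)^2\bigr) + \tfrac{\ep}{2}\bigl(\theta_{n+1}^2-\theta_n^2\bigr).
\]
Upon summation this telescopes to $c_s\int_\Omega\rho_\ep(\theta_m)+\tfrac{c_s\ep}{2}\|\ln_\ep\theta_m\|_H^2+\tfrac{c_s\ep}{2}\|\theta_m\|_H^2$, which, since $\rho_\ep\ge0$, supplies the first three listed quantities. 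The diffusive term $\eta\|\nabla\theta_{n+1}\|_H^2$ together with $\int_\Gamma\alpha_\Gamma\theta_{n+1}^2$ (and $\alpha_*\le\alpha_\Gamma$) controls $\|\overline{\theta}_h\|_{L^2(0,T;V)}^2$ through the equivalence \eqref{keyineq}.

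For the Cahn--Hilliard block, the convexity of $\hat{\beta}_\ep$ (Remark~\ref{aboutbetahatep}) gives $(\beta_\ep(\varphi_{n+1}),\delta_h\varphi_n)_H\ge\tfrac1h\int_\Omega(\hat{\beta}_\ep(\varphi_{n+1})-\hat{\beta}_\ep(\varphi_n))$, the gradient term yields $\tfrac{\gamma}{2}(\|\nabla\varphi_m\|_H^2-\|\nabla\varphi_0\|_H^2)$, and $\tau\sum_n h\|\delta_h\varphi_n\|_H^2=\tau\|\partial_t\hat{\varphi}_h\|_{L^2(0,T;H)}^2$. The term $h(\delta_h\mu_n,\mu_{n+1})_H$ is expanded as $\tfrac12(\|\mu_{n+1}\|_H^2-\|\mu_n\|_H^2)+\tfrac{h^2}{2}\|\delta_h\mu_n\|_H^2$, producing simultaneously $\tfrac{h}{2}\|\mu_m\|_H^2$ and $\tfrac{h^2}{2}\|\partial_t\hat{\mu}_h\|_{L^2(0,T;H)}^2$ (recall $\mu_0=0$); while $\sum_n h\|\nabla\mu_{n+1}\|_H^2=\|\nabla\overline{\mu}_h\|_{L^2(0,T;H)}^2$ coincides with $\|\partial_t\hat{\varphi}_h+h\partial_t\hat{\mu}_h\|_{L^2(0,T;V_0^*)}^2$, because by \eqref{dfPtauepnsol2} that combination equals $\Delta\overline{\mu}_h$, whose $V_0^*$-norm is exactly $\|\nabla\overline{\mu}_h\|_H$ by \eqref{defN}. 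The right-hand side is absorbed by Young's inequality against $\|\overline{\theta}_h\|_{L^2(0,T;V)}^2$ and the boundary dissipation (using (C4), (C7), and $\|\overline{f}_h\|_{L^2(0,T;H)}\le\|f\|_{L^2(0,T;H)}$), and the initial contributions stay uniformly bounded for small $\ep$ since $\theta_*\le\theta_0\le\theta^*$ forces $\rho_\ep(\theta_0),\ln_\ep(\theta_0)\to\theta_0,\ln\theta_0$, while $\hat{\beta}_\ep(\varphi_0)\le\hat{\beta}(\varphi_0)\in L^1(\Omega)$ and $\varphi_0\in V$ by (C6).

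The main obstacle is handling $\sigma$ in a way uniform in both $\ep$ and $\tau$. Using $(\sigma'(\varphi_{n+1}),\delta_h\varphi_n)_H\ge\tfrac1h\int_\Omega(\sigma(\varphi_{n+1})-\sigma(\varphi_n))-\tfrac{\|\sigma''\|_{L^\infty(\mathbb{R})}h}{2}\|\delta_h\varphi_n\|_H^2$, the negative remainder is dominated by $\tau\|\delta_h\varphi_n\|_H^2$ once $h<h_0$ (this, together with the contraction threshold of Lemma~\ref{existPtauepn}, is what dictates the form of $h_0$). The genuinely delicate point is the telescoped boundary term $\int_\Omega\sigma(\varphi_m)$, which is only quadratically bounded below and therefore cannot be absorbed into $\tfrac{\gamma}{2}\|\nabla\varphi_m\|_H^2$ by Poincar\'e alone without spoiling $\tau$-uniformity. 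Here I would invoke the superquadratic growth (C2): for any prescribed small $\delta>0$ there is a constant $C_\delta$ with $r^2\le\tfrac{\delta}{2}\hat{\beta}(r)+C_\delta$, and combining this on the set $\{|J_\ep^\beta(r)|\ge|r|/2\}$ with the Moreau lower bound $\hat{\beta}_\ep(r)\ge\tfrac{1}{8\ep}r^2$ on its complement, one gets, for $\ep$ below a threshold $\ep_1=\ep_1(\delta)$, the $\ep$-uniform estimate $r^2\le\delta\,\hat{\beta}_\ep(r)+C_\delta$. Choosing $\delta$ so that $\delta\,\|\sigma''\|_{L^\infty(\mathbb{R})}$ is small then lets $\int_\Omega\sigma(\varphi_m)$ be absorbed into the nonnegative left-hand term $\int_\Omega\hat{\beta}_\ep(\varphi_m)$ with no $\tau^{-1}$ factor, which also yields $\|\overline{\varphi}_h\|_{L^\infty(0,T;V)}^2\le C$ directly (the $L^2$-part of the norm coming from the same $\hat{\beta}_\ep$-bound). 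A discrete Gronwall argument over $m$ then closes the remaining lower-order couplings, giving $C$ independent of $h$, $\ep$, and $\tau$.
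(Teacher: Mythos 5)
Your proposal is correct and its skeleton coincides with the paper's: the same multipliers $\theta_{n+1}$, $\mu_{n+1}$ (equivalently $h{\cal N}(\delta_h\varphi_n+\mu_{n+1}-\mu_n)$) and $\delta_h\varphi_n$, the exact cancellation of the coupling term thanks to the explicit treatment of $\lambda_\ep'(\varphi_n)$, the three-way splitting of $(\delta_h u_n,\theta_{n+1})_H$ via $\theta=\rho_\ep(\theta)+\ep\ln_\ep(\theta)$ together with $e^x(x-y)\ge e^x-e^y$, the convexity of $\hat\beta_\ep$, and the coercivity \eqref{keyineq} for the boundary terms. Where you genuinely diverge is the treatment of $\sigma'$. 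The paper does \emph{not} telescope $\sigma$: it rewrites $-h(\sigma'(\varphi_{n+1}),\delta_h\varphi_n)_H$ through the zero-mean duality pairing against $\delta_h\varphi_n+\mu_{n+1}-\mu_n$ (inequality \eqref{a14}), pays for it with half of the $V_0^*$-dissipation already sitting on the left-hand side, and is then forced to carry a term $2\|\sigma''\|_{L^\infty(\mathbb{R})}^2 h\sum_n\|\varphi_{n+1}\|_V^2$ that is closed by the discrete Gronwall lemma (this is also where the constraint $h<\gamma/(8\|\sigma''\|_{L^\infty(\mathbb{R})}^2)$ is actually used). You instead Taylor-expand $\sigma$, absorb the second-order remainder $\tfrac12\|\sigma''\|_{L^\infty(\mathbb{R})}h^2\|\delta_h\varphi_n\|_H^2$ into the viscosity term $\tau h\|\delta_h\varphi_n\|_H^2$ using $h<\tau/(2\|\sigma''\|_{L^\infty(\mathbb{R})})$, and then control the telescoped endpoint $\int_\Omega\sigma(\varphi_m)$ by the superquadraticity (C2) of $\hat\beta$, i.e.\ by the same $\ep$-uniform lower bound $\hat\beta_\ep(r)\ge\delta^{-1}r^2-C_\delta$ that the paper records as \eqref{a17} (and only uses to upgrade $\|\nabla\varphi_m\|_H^2$ to $\|\varphi_m\|_V^2$). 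Both routes respect the $\tau$-uniformity required by the statement: the paper's because the $\sigma'$-term never touches the viscosity, yours because the absorption constant $\|\sigma''\|_{L^\infty(\mathbb{R})}h/(2\tau)<1/4$ is controlled by the very definition of $h_0$. Your variant arguably makes the Gronwall step dispensable, at the price of leaning on (C2) in one more place; the paper's variant keeps the $\sigma$-estimate independent of both $\tau$ and $\ep_1$ but needs the extra smallness condition on $h$ and the Gronwall lemma. Either way the listed quantities are all recovered, so the proof is sound.
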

\begin{proof}
Taking $v = h\theta_{n+1}$ in \eqref{dfPtauepnsol1} and using (C4) 
lead to the inequality 
\begin{align}\label{a1}
&c_{s}(\theta_{n+1}, 
          \mbox{\rm Ln$_{\ep}$}(\theta_{n+1})-\mbox{\rm Ln$_{\ep}$}(\theta_{n}))_{H} 
+ \eta h \|\nabla\theta_{n+1}\|_{H}^2 
+ \alpha_{*}h\|\theta_{n+1}\|_{L^2(\Gamma)}^2 
\\ \notag 
&\leq h(f_{n+1}, \theta_{n+1})_{H} 
    + h\int_{\Gamma}\alpha_{\Gamma}\theta_{\Gamma, n+1}\theta_{n+1} 
    - \int_{\Omega}\lambda_{\ep}'(\varphi_{n})(\varphi_{n+1}-\varphi_{n})\theta_{n+1}.  
\end{align}
Here we deduce from Remark \ref{def.of.rhoep} that 
\begin{align}\label{a2}
&c_{s}(\theta_{n+1}, 
          \mbox{\rm Ln$_{\ep}$}(\theta_{n+1})-\mbox{\rm Ln$_{\ep}$}(\theta_{n}))_{H} 
\\ \notag 
&=  c_{s}\ep(\theta_{n+1}, \theta_{n+1}-\theta_{n})_{H} 
+ c_{s}\ep(\ln_{\ep}(\theta_{n+1}), 
                   \ln_{\ep}(\theta_{n+1})-\ln_{\ep}(\theta_{n}))_{H} 
\\ \notag 
&\,\quad + c_{s}\bigl(e^{\ln \rho_{\ep}(\theta_{n+1})}, 
                                     \ln \rho_{\ep}(\theta_{n+1})
                                                -\ln\rho_{\ep}(\theta_{n})\bigr)_{H} 
\\ \notag 
&\geq  \frac{c_{s}\ep}{2}\|\theta_{n+1}\|_{H}^2 
     - \frac{c_{s}\ep}{2}\|\theta_{n}\|_{H}^2 
     + \frac{c_{s}\ep}{2}\|\theta_{n+1}-\theta_{n}\|_{H}^2 
\\ \notag 
&\,\quad+\frac{c_{s}\ep}{2}\|\ln_{\ep}(\theta_{n+1})\|_{H}^2 
     - \frac{c_{s}\ep}{2}\|\ln_{\ep}(\theta_{n})\|_{H}^2 
     + \frac{c_{s}\ep}{2}\|\ln_{\ep}(\theta_{n+1})
                                          -\ln_{\ep}(\theta_{n})\|_{H}^2 
\\ \notag 
&\,\quad + c_{s}\int_{\Omega} \rho_{\ep}(\theta_{n+1}) 
              - c_{s}\int_{\Omega} \rho_{\ep}(\theta_{n}),  
\end{align}
where the inequality $e^x (x-y) \geq e^x - e^y$ ($x, y \in \mathbb{R}$) 
was applied. 
We point out that this inequality holds true as $ze^z \geq e^z -1$ 
for all $z \in \mathbb{R}$. 
Next we observe that the Young inequality, (C4) and (C7) yield 
that there exist constants $C_{1}, C_{2}>0$ satisfying  
\begin{align}\label{a3}
&h(f_{n+1}, \theta_{n+1})_{H} 
    + h\int_{\Gamma}\alpha_{\Gamma}\theta_{\Gamma, n+1}\theta_{n+1} 
\\ \notag 
&\leq C_{1} h\|f_{n+1}\|_{H}^2 
+ \frac{\min\left\{\eta, \frac{\alpha_{*}}{2} \right\}}{2C^{*}}h\|\theta_{n+1}\|_{V}^2 
         + \alpha^{*}\theta^{*}h\int_{\Gamma}|\theta_{n+1}| 
\\ \notag 
&\leq C_{1} h\|f_{n+1}\|_{H}^2 
+ \frac{\min\left\{\eta, \frac{\alpha_{*}}{2} \right\}}{2C^{*}}h\|\theta_{n+1}\|_{V}^2 
         + \frac{\alpha_{*}h}{2}\|\theta_{n+1}\|_{L^2(\Gamma)}^2 + C_{2}h.   
\end{align}
Thus we see from \eqref{a1}-\eqref{a3} and \eqref{keyineq} that 
\begin{align}\label{a4}
&\frac{c_{s}\ep}{2}\|\theta_{n+1}\|_{H}^2 
     - \frac{c_{s}\ep}{2}\|\theta_{n}\|_{H}^2 
     + \frac{c_{s}\ep}{2}\|\theta_{n+1}-\theta_{n}\|_{H}^2 
\\ \notag 
&+\frac{c_{s}\ep}{2}\|\ln_{\ep}(\theta_{n+1})\|_{H}^2 
     - \frac{c_{s}\ep}{2}\|\ln_{\ep}(\theta_{n})\|_{H}^2 
     + \frac{c_{s}\ep}{2}\|\ln_{\ep}(\theta_{n+1})
                                          -\ln_{\ep}(\theta_{n})\|_{H}^2 
\\ \notag 
& + c_{s}\int_{\Omega} \rho_{\ep}(\theta_{n+1}) 
              - c_{s}\int_{\Omega} \rho_{\ep}(\theta_{n}) 
+ \frac{\min\left\{\eta, \frac{\alpha_{*}}{2} \right\}}{2C^{*}}h\|\theta_{n+1}\|_{V}^2 
\\ \notag 
&\leq  C_{1} h\|f_{n+1}\|_{H}^2 + C_{2}h 
          - \int_{\Omega}\lambda_{\ep}'(\varphi_{n})(\varphi_{n+1}-\varphi_{n})  
                                                                                              \theta_{n+1}. 
\end{align}
We sum \eqref{a4} over $n=0, ..., m-1$ with $1 \leq m \leq N$ 
to obtain that 
\begin{align}\label{a5}
&\frac{c_{s}\ep}{2}\|\theta_{m}\|_{H}^2 
  +\frac{c_{s}\ep}{2}\|\ln_{\ep}(\theta_{m})\|_{H}^2      
  + c_{s}\int_{\Omega} \rho_{\ep}(\theta_{m}) 
  + \frac{\min\left\{\eta, \frac{\alpha_{*}}{2} \right\}}{2C^{*}}h
                         \sum_{n=0}^{m-1}\|\theta_{n+1}\|_{V}^2 
\\ \notag 
&\leq \frac{c_{s}\ep}{2}\|\theta_{0}\|_{H}^2 
         + \frac{c_{s}\ep}{2}\|\ln_{\ep}(\theta_{0})\|_{H}^2      
         + c_{s}\int_{\Omega} \rho_{\ep}(\theta_{0}) 
\\ \notag
&\,\quad + C_{1} h\sum_{n=0}^{m-1}\|f_{n+1}\|_{H}^2 
       + C_{2}T 
          - \sum_{n=0}^{m-1}
                 \int_{\Omega}\lambda_{\ep}'(\varphi_{n})(\varphi_{n+1}-\varphi_{n})  
                                                                                             \theta_{n+1}.  
\end{align}
Here, recalling Remark \ref{def.of.rhoep}, we have that 
\begin{align}\label{a6}
&\frac{c_{s}\ep}{2}\|\ln_{\ep}(\theta_{0})\|_{H}^2 
          + c_{s}\int_{\Omega} \rho_{\ep}(\theta_{0}) 
\\ \notag 
&= \frac{c_{s}\ep}{2}\|\ln_{\ep}(\theta_{0})\|_{H}^2  
    + c_{s}\int_{\Omega} (\theta_{0} - \ep\ln_{\ep}(\theta_{0})) 
\\ \notag 
&\leq  \frac{c_{s}}{2}\|\ln \theta_{0}\|_{H}^2 
          + c_{s}\|\theta_{0}\|_{L^1(\Omega)} 
          + c_{s}\|\ln \theta_{0}\|_{L^1(\Omega)} 
\\ \notag 
&\leq  \frac{c_{s}|\Omega|}{2}\max_{\theta_{*}\leq r \leq \theta^{*}}|\ln r|^2 
          + c_{s}\|\theta_{0}\|_{L^1(\Omega)}   
          + c_{s}|\Omega|\max_{\theta_{*}\leq r \leq \theta^{*}}|\ln r|   
\end{align}
for all $\ep \in (0, 1]$. 
Hence, owing to \eqref{a5} and \eqref{a6}, there is a constant $C_3$ such that   
\begin{align}\label{a7}
&\frac{c_{s}\ep}{2}\|\theta_{m}\|_{H}^2 
  +\frac{c_{s}\ep}{2}\|\ln_{\ep}(\theta_{m})\|_{H}^2      
  + c_{s}\int_{\Omega} \rho_{\ep}(\theta_{m}) 
  + \frac{\min\left\{\eta, \frac{\alpha_{*}}{2} \right\}}{2C^{*}}h
                         \sum_{n=0}^{m-1}\|\theta_{n+1}\|_{V}^2 
\\ \notag 
&\leq C_{3}
         - \sum_{n=0}^{m-1}
                 \int_{\Omega}\lambda_{\ep}'(\varphi_{n})(\varphi_{n+1}-\varphi_{n})  
                                                                                                \theta_{n+1} 
\end{align} 
for all $h \in (0, h_{0})$, $\ep \in (0, 1]$, $\tau > 0$, for $m=1, ..., N$. 
It follows from \eqref{dfPtauepnsol2} that   
\begin{align}\label{tauepnint0}
\int_{\Omega} \left(\frac{\varphi_{n+1}-\varphi_{n}}{h} 
                                    + \mu_{n+1} - \mu_{n} \right) 
= 0
\end{align}
for $n=0, ... , N-1$. 
Using \eqref{innerVzerostar},   
multiplying \eqref{dfPtauepnsol2} by 
$h{\cal N}\left(\frac{\varphi_{n+1}-\varphi_{n}}{h} 
                                            + \mu_{n+1} - \mu_{n}\right)$ 
and integrating over $\Omega$ 
yield that 
\begin{align}\label{a8}
&h\left\|\frac{\varphi_{n+1}-\varphi_{n}}{h} 
                           + \mu_{n+1} - \mu_{n}\right\|_{V_{0}^{*}}^2 
\\ \notag 
&+ h\int_{\Omega}\nabla \mu_{n+1} \cdot 
      \nabla {\cal N}\left(\frac{\varphi_{n+1}-\varphi_{n}}{h} 
                                            + \mu_{n+1} - \mu_{n}\right) 
= 0. 
\end{align}
Here we infer from \eqref{defN} and \eqref{tauepnint0} that 
\begin{align}\label{a9}
&h\int_{\Omega}\nabla \mu_{n+1} \cdot 
      \nabla {\cal N}\left(\frac{\varphi_{n+1}-\varphi_{n}}{h} 
                                            + \mu_{n+1} - \mu_{n}\right) 
\\ \notag 
&= h\int_{\Omega}
         \nabla \left(\mu_{n+1}
                     -\frac{1}{|\Omega|}\int_{\Omega}\mu_{n+1} \right) \cdot 
      \nabla {\cal N}\left(\frac{\varphi_{n+1}-\varphi_{n}}{h} 
                                            + \mu_{n+1} - \mu_{n}\right) 
\\ \notag 
&=h \left\langle 
         \frac{\varphi_{n+1}-\varphi_{n}}{h} 
                                            + \mu_{n+1} - \mu_{n}, 
            \mu_{n+1}
                     -\frac{1}{|\Omega|}\int_{\Omega}\mu_{n+1} 
       \right\rangle_{V_{0}^{*}, V_{0}}
\\ \notag 
&=h \left( 
         \frac{\varphi_{n+1}-\varphi_{n}}{h} 
                                            + \mu_{n+1} - \mu_{n}, 
            \mu_{n+1}
                     -\frac{1}{|\Omega|}\int_{\Omega}\mu_{n+1} 
       \right)_{H} 
\\ \notag 
&= h \left( 
         \frac{\varphi_{n+1}-\varphi_{n}}{h} 
                                            + \mu_{n+1} - \mu_{n}, 
            \mu_{n+1} 
       \right)_{H}.  
\end{align}
Thus we derive from \eqref{a8}, \eqref{a9} and \eqref{dfPtauepnsol3} that  
\begin{align}\label{a10}
&h\left\|\frac{\varphi_{n+1}-\varphi_{n}}{h} 
                           + \mu_{n+1} - \mu_{n}\right\|_{V_{0}^{*}}^2 
+ \tau h\left\|\frac{\varphi_{n+1}-\varphi_{n}}{h}\right\|_{H}^2 
\\ \notag 
&+ \gamma(\nabla\varphi_{n+1}, 
                        \nabla(\varphi_{n+1}-\varphi_{n}))_{H} 
+ (\beta_{\ep}(\varphi_{n+1}), 
                                         \varphi_{n+1}-\varphi_{n})_{H} 
\\ \notag 
&+  h(\mu_{n+1}-\mu_{n}, \mu_{n+1})_{H} 
\\ \notag 
&= - h\left(\sigma'(\varphi_{n+1}), 
                       \frac{\varphi_{n+1}-\varphi_{n}}{h} \right)_{H} 
    + \int_{\Omega}\lambda_{\ep}'(\varphi_{n})(\varphi_{n+1}-\varphi_{n})\theta_{n+1}.  
\end{align}
On the other hand, we have that 
\begin{align}\label{a11}
&\gamma(\nabla\varphi_{n+1}, 
                        \nabla(\varphi_{n+1}-\varphi_{n}))_{H} 
\\ \notag 
&= \frac{\gamma}{2}\|\nabla\varphi_{n+1}\|_{H}^2 
     - \frac{\gamma}{2}\|\nabla\varphi_{n}\|_{H}^2 
     + \frac{\gamma}{2}\|\nabla(\varphi_{n+1}
                                                                 -\varphi_{n})\|_{H}^2 
\end{align}
and 
\begin{align}\label{a12}
& h(\mu_{n+1}-\mu_{n}, \mu_{n+1})_{H} 
\\ \notag 
&= \frac{h}{2}\|\mu_{n+1}\|_{H}^2 
     - \frac{h}{2}\|\mu_{n}\|_{H}^2 
     + \frac{h}{2}\|\mu_{n+1}-\mu_{n}\|_{H}^2.  
\end{align}
By Remark \ref{aboutbetahatep} and the definition of subdifferential, 
it holds that 
\begin{align}\label{a13}
(\beta_{\ep}(\varphi_{n+1}), 
                                         \varphi_{n+1}-\varphi_{n})_{H} 
\geq \int_{\Omega}\hat{\beta}_{\ep}(\varphi_{n+1}) 
        - \int_{\Omega}\hat{\beta}_{\ep}(\varphi_{n}).  
\end{align}
We see from \eqref{tauepnint0} and the Young inequality that 
\begin{align}\label{a14}
& - h\left(\sigma'(\varphi_{n+1}), 
                       \frac{\varphi_{n+1}-\varphi_{n}}{h} \right)_{H} 
\\ \notag 
&= - h\left\langle
            \frac{\varphi_{n+1}-\varphi_{n}}{h} + \mu_{n+1} - \mu_{n}, 
                \sigma'(\varphi_{n+1})
                   -\frac{1}{|\Omega|}\int_{\Omega}\sigma'(\varphi_{n+1})   
         \right\rangle_{V_{0}^{*}, V_{0}} 
\\ \notag 
 &\,\quad+ h\left(\sigma'(\varphi_{n+1}), \mu_{n+1} - \mu_{n} \right)_{H} 
\\ \notag 
&\leq \frac{h}{2}\left\|\frac{\varphi_{n+1}-\varphi_{n}}{h} 
                                          + \mu_{n+1} - \mu_{n}\right\|_{V_{0}^{*}}^2 
        + \frac{\|\sigma''\|_{L^{\infty}(\mathbb{R})}^2 h}{2}
                                                               \|\nabla \varphi_{n+1}\|_{H}^2 
\\ \notag 
&\,\quad + 2\|\sigma''\|_{L^{\infty}(\mathbb{R})}^2 h\|\varphi_{n+1}\|_{H}^2 
             + 2|\sigma'(0)|^2 |\Omega| h  
             + \frac{h}{4}\|\mu_{n+1}-\mu_{n}\|_{H}^2. 
\end{align}
Hence it follows from \eqref{a10}-\eqref{a14} that   
\begin{align}\label{a15}
&\frac{h}{2}\left\|\frac{\varphi_{n+1}-\varphi_{n}}{h} 
                           + \mu_{n+1} - \mu_{n}\right\|_{V_{0}^{*}}^2 
+ \tau h
        \left\|\frac{\varphi_{n+1}-\varphi_{n}}{h}\right\|_{H}^2 
\\ \notag 
&+\frac{\gamma}{2}\|\nabla\varphi_{n+1}\|_{H}^2 
     - \frac{\gamma}{2}\|\nabla\varphi_{n}\|_{H}^2 
     + \frac{\gamma}{2}\|\nabla(\varphi_{n+1}
                                                                 -\varphi_{n})\|_{H}^2 
\\ \notag 
&+\int_{\Omega}\hat{\beta}_{\ep}(\varphi_{n+1}) 
                  - \int_{\Omega}\hat{\beta}_{\ep}(\varphi_{n}) 
+ \frac{h}{2}\|\mu_{n+1}\|_{H}^2 
     - \frac{h}{2}\|\mu_{n}\|_{H}^2 
+ \frac{h}{4}\|\mu_{n+1}-\mu_{n}\|_{H}^2 
\\ \notag 
&\leq  2\|\sigma''\|_{L^{\infty}(\mathbb{R})}^2 h\|\varphi_{n+1}\|_{V}^2 
             + 2|\sigma'(0)|^2 |\Omega| h  
    + \int_{\Omega}\lambda_{\ep}'(\varphi_{n})(\varphi_{n+1}-\varphi_{n})\theta_{n+1}.
\end{align}
Therefore summing \eqref{a15} over $n=0, ..., m-1$ with $1 \leq m \leq N$ 
and using Remark \ref{aboutbetahatep}  
lead to the identity 
\begin{align}\label{a16}
&\frac{h}{2}\sum_{n=0}^{m-1}\left\|\frac{\varphi_{n+1}-\varphi_{n}}{h} 
                           + \mu_{n+1} - \mu_{n}\right\|_{V_{0}^{*}}^2 
+ \tau h\sum_{n=0}^{m-1}
        \left\|\frac{\varphi_{n+1}-\varphi_{n}}{h}\right\|_{H}^2 
\\ \notag 
&+\frac{\gamma}{2}\|\nabla\varphi_{m}\|_{H}^2 
+\int_{\Omega}\hat{\beta}_{\ep}(\varphi_{m}) 
+ \frac{h}{2}\|\mu_{m}\|_{H}^2 
+ \frac{h}{4}\sum_{n=0}^{m-1}\|\mu_{n+1}-\mu_{n}\|_{H}^2 
\\ \notag 
&\leq \frac{\gamma}{2}\|\nabla\varphi_{0}\|_{H}^2 
         + \int_{\Omega}\hat{\beta}(\varphi_{0}) 
        + 2\|\sigma''\|_{L^{\infty}(\mathbb{R})}^2 h
                                           \sum_{n=0}^{m-1}\|\varphi_{n+1}\|_{V}^2 
     \\ \notag  
&\,\quad+ 2|\Omega||\sigma'(0)|^2 T 
     + \sum_{n=0}^{m-1} 
       \int_{\Omega}\lambda_{\ep}'(\varphi_{n})(\varphi_{n+1}-\varphi_{n})\theta_{n+1}. 
\end{align}
Owing to (C2), there exist constants $\ep_{1} \in (0, 1]$ and $C_{4} > 0$ 
such that 
\begin{align}\label{a17} 
\frac{1}{2}\hat{\beta}_{\ep}(r) 
\geq \frac{\gamma}{2} r^2 - C_{4}
\end{align}
for all $r \in \mathbb{R}$ and all $\ep \in (0, \ep_{1})$ 
(see, e.g., \cite[Lemma 4.1]{BCFG2007}). 
Thus we deduce from \eqref{a16} and \eqref{a17} that 
\begin{align*}
&\frac{h}{2}\sum_{n=0}^{m-1}\left\|\frac{\varphi_{n+1}-\varphi_{n}}{h} 
                           + \mu_{n+1} - \mu_{n}\right\|_{V_{0}^{*}}^2 
+ \tau h\sum_{n=0}^{m-1}
        \left\|\frac{\varphi_{n+1}-\varphi_{n}}{h}\right\|_{H}^2 
\\ \notag 
&+\left(\frac{\gamma}{2}-2\|\sigma''\|_{L^{\infty}(\mathbb{R})}^2 h \right)
                                                                                   \|\varphi_{m}\|_{V}^2 
+ \frac{1}{2}\int_{\Omega}\hat{\beta}_{\ep}(\varphi_{m}) 
+ \frac{h}{2}\|\mu_{m}\|_{H}^2 
+ \frac{h}{4}\sum_{n=0}^{m-1}\|\mu_{n+1}-\mu_{n}\|_{H}^2 
\\ \notag 
&\leq \frac{\gamma}{2}\|\nabla\varphi_{0}\|_{H}^2 
         + \int_{\Omega}\hat{\beta}(\varphi_{0}) 
        + 2\|\sigma''\|_{L^{\infty}(\mathbb{R})}^2 h
                                           \sum_{n=0}^{m-2}\|\varphi_{n+1}\|_{V}^2 
     \\ \notag  
&\,\quad+ 2|\Omega||\sigma'(0)|^2 T + C_{4}|\Omega|
     + \sum_{n=0}^{m-1} 
       \int_{\Omega}\lambda_{\ep}'(\varphi_{n})(\varphi_{n+1}-\varphi_{n})\theta_{n+1} 
\end{align*} 
for all $h \in (0, h_{0})$, $\ep \in (0, \ep_{1})$, $\tau > 0$, for $m=1, ..., N$. 
Hence there exists a constant $C_{5} > 0$ such that 
\begin{align}\label{a19}
&\frac{h}{2}\sum_{n=0}^{m-1}\left\|\frac{\varphi_{n+1}-\varphi_{n}}{h} 
                           + \mu_{n+1} - \mu_{n}\right\|_{V_{0}^{*}}^2 
+ \tau h\sum_{n=0}^{m-1}
        \left\|\frac{\varphi_{n+1}-\varphi_{n}}{h}\right\|_{H}^2 
\\ \notag 
&+ \frac{\gamma}{4}\|\varphi_{m}\|_{V}^2 
+ \frac{1}{2}\int_{\Omega}\hat{\beta}_{\ep}(\varphi_{m}) 
+ \frac{h}{2}\|\mu_{m}\|_{H}^2 
+ \frac{h}{4}\sum_{n=0}^{m-1}\|\mu_{n+1}-\mu_{n}\|_{H}^2 
\\ \notag 
&\leq C_{5}
        + 2\|\sigma''\|_{L^{\infty}(\mathbb{R})}^2 h
                                           \sum_{n=0}^{m-2}\|\varphi_{n+1}\|_{V}^2 
     + \sum_{n=0}^{m-1} 
       \int_{\Omega}\lambda_{\ep}'(\varphi_{n})(\varphi_{n+1}-\varphi_{n})\theta_{n+1}
\end{align}
for all $h \in (0, h_{0})$, $\ep \in (0, \ep_{1})$, $\tau > 0$, for $m=1, ..., N$.    
Thus we combine \eqref{a7} and \eqref{a19} to derive that 
\begin{align*}
&\frac{c_{s}\ep}{2}\|\theta_{m}\|_{H}^2 
  +\frac{c_{s}\ep}{2}\|\ln_{\ep}(\theta_{m})\|_{H}^2      
  + c_{s}\int_{\Omega} \rho_{\ep}(\theta_{m}) 
  + \frac{\min\left\{\eta, \frac{\alpha_{*}}{2} \right\}}{2C^{*}}h
                         \sum_{n=0}^{m-1}\|\theta_{n+1}\|_{V}^2 
\\ \notag 
& + \frac{h}{2}\sum_{n=0}^{m-1}\left\|\frac{\varphi_{n+1}-\varphi_{n}}{h} 
                           + \mu_{n+1} - \mu_{n}\right\|_{V_{0}^{*}}^2 
+ \tau h\sum_{n=0}^{m-1}
        \left\|\frac{\varphi_{n+1}-\varphi_{n}}{h}\right\|_{H}^2 
\\ \notag 
&+ \frac{\gamma}{4}\|\varphi_{m}\|_{V}^2 
+ \frac{1}{2}\int_{\Omega}\hat{\beta}_{\ep}(\varphi_{m}) 
+ \frac{h}{2}\|\mu_{m}\|_{H}^2 
+ \frac{h}{4}\sum_{n=0}^{m-1}\|\mu_{n+1}-\mu_{n}\|_{H}^2 
\\ \notag 
&\leq C_{3} +C_{5} 
        + 2\|\sigma''\|_{L^{\infty}(\mathbb{R})}^2 h
                                           \sum_{n=0}^{m-2}\|\varphi_{n+1}\|_{V}^2 
\end{align*}
for all $h \in (0, h_{0})$, $\ep \in (0, \ep_{1})$, $\tau > 0$, for $m=1, ..., N$. 
Therefore, by virtue of the discrete Gronwall lemma 
(see, e.g., \cite[Prop.\ 2.2.1]{Jerome}), 
there exists a constant $C_{6} > 0$ such that 
\begin{align*}
&\ep\|\theta_{m}\|_{H}^2 
  + \ep\|\ln_{\ep}(\theta_{m})\|_{H}^2      
  + \int_{\Omega} \rho_{\ep}(\theta_{m}) 
  + h\sum_{n=0}^{m-1}\|\theta_{n+1}\|_{V}^2 
\\ \notag 
& + h\sum_{n=0}^{m-1}\left\|\frac{\varphi_{n+1}-\varphi_{n}}{h} 
                                    + \mu_{n+1} - \mu_{n}\right\|_{V_{0}^{*}}^2 
+ \tau h\sum_{n=0}^{m-1}
        \left\|\frac{\varphi_{n+1}-\varphi_{n}}{h}\right\|_{H}^2 
\\ \notag 
&+\|\varphi_{m}\|_{V}^2 
+ \int_{\Omega}\hat{\beta}_{\ep}(\varphi_{m}) 
+ h\|\mu_{m}\|_{H}^2 
+ h\sum_{n=0}^{m-1}\|\mu_{n+1}-\mu_{n}\|_{H}^2 
\\ \notag 
&\leq C_{6}
\end{align*}  
for all $h \in (0, h_{0})$, $\ep \in (0, \ep_{1})$, $\tau > 0$, for $m=1, ..., N$,  
which means that Lemma \ref{firstestiPtaueph} holds 
by \eqref{delta1}, \eqref{hat2}-\eqref{line2}.       
\end{proof}

\begin{lem}\label{secondestiPtaueph}
Let $h_{0}$, $\ep_{1}$ be as in Lemma \ref{firstestiPtaueph} 
and let $\overline{\tau}$ be as in Theorem \ref{maintheorem1}.   
Then there exists a constant $C>0$ depending on the data such that  
$$
\|\overline{\mu}_{h}\|_{L^2(0, T; V)}^2 \leq C 
$$
for all $h \in (0, h_{0})$, $\ep \in (0, \ep_{1})$ and $\tau \in (0, \overline{\tau})$.   
\end{lem}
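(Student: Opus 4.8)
The plan is to estimate $\overline{\mu}_h$ levelwise, splitting at each step $n$ the full $V$-norm of $\mu_{n+1}$ into its gradient part and the contribution of the spatial mean $\mathfrak{m}_{n+1}:=\frac1{|\Omega|}\int_\Omega\mu_{n+1}$. By the Poincar\'e--Wirtinger inequality one has $\|\mu_{n+1}\|_V^2\le C\bigl(\|\nabla\mu_{n+1}\|_H^2+\mathfrak{m}_{n+1}^2\bigr)$, so it suffices to bound $h\sum_{n=0}^{N-1}\|\nabla\mu_{n+1}\|_H^2$ and $h\sum_{n=0}^{N-1}\mathfrak{m}_{n+1}^2$ uniformly in $h$, $\ep$ and $\tau$.

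The gradient part comes essentially for free from Lemma~\ref{firstestiPtaueph}. From \eqref{dfPtauepnsol2} we read $\Delta\mu_{n+1}=\delta_{h}\varphi_{n}+h\delta_{h}\mu_{n}$, a function with zero spatial mean by \eqref{tauepnint0}; testing this identity with $\mu_{n+1}-\mathfrak{m}_{n+1}\in V_0$ and using $\partial_\nu\mu_{n+1}=0$ gives $\|\nabla\mu_{n+1}\|_H\le\|\delta_{h}\varphi_{n}+h\delta_{h}\mu_{n}\|_{V_0^{*}}$. Multiplying by $h$, summing, and invoking the bound on $\|\partial_t\hat{\varphi}_h+h\partial_t\hat{\mu}_h\|_{L^2(0,T;V_0^{*})}^2$ from Lemma~\ref{firstestiPtaueph} yields $h\sum_n\|\nabla\mu_{n+1}\|_H^2\le C$.

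The mean part is the heart of the estimate. Integrating \eqref{dfPtauepnsol3} over $\Omega$ and using $\int_\Omega\Delta\varphi_{n+1}=0$ leaves $|\Omega|\,\mathfrak{m}_{n+1}=\tau\int_\Omega\delta_{h}\varphi_{n}+\int_\Omega\beta_{\ep}(\varphi_{n+1})+\int_\Omega\sigma'(\varphi_{n+1})-\int_\Omega\lambda_{\ep}'(\varphi_{n})\theta_{n+1}$, so the whole issue reduces to an $L^1(\Omega)$ bound on $\beta_{\ep}(\varphi_{n+1})$. Here I would use the interior-point condition (C6): since $m_{0}\in\mathrm{Int}\,D(\beta)$, there are constants $\delta>0$ and $C_\beta\ge0$, independent of $\ep$ small, with $\beta_{\ep}(r)(r-m_{0})\ge\delta\,|\beta_{\ep}(r)|-C_\beta$ for all $r\in\mathbb{R}$. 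To control $\int_\Omega\beta_{\ep}(\varphi_{n+1})(\varphi_{n+1}-m_{0})$ I would test \eqref{dfPtauepnsol3} with $\varphi_{n+1}-m_{0}$ and solve for this term. The decisive point is the quantity $\int_\Omega\mu_{n+1}(\varphi_{n+1}-m_{0})$: because the discrete scheme conserves the mean of $\varphi$ only up to an $O(h)$ error, summing \eqref{dfPtauepnsol2} and recalling $\mu_0=0$ (see \eqref{muzerozero}) gives $\int_\Omega(\varphi_{n+1}-m_{0})=-h\int_\Omega\mu_{n+1}=-h|\Omega|\,\mathfrak{m}_{n+1}$, whence $\int_\Omega\mu_{n+1}(\varphi_{n+1}-m_{0})=\int_\Omega(\mu_{n+1}-\mathfrak{m}_{n+1})(\varphi_{n+1}-m_{0})-h|\Omega|\,\mathfrak{m}_{n+1}^2$. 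The last term is $\le0$ and can be discarded, while the first is bounded by $C\|\nabla\mu_{n+1}\|_H$ through Poincar\'e and the $L^\infty(0,T;V)$ bound on $\overline{\varphi}_h$. The remaining contributions are harmless: $-\gamma\|\nabla\varphi_{n+1}\|_H^2\le0$, the term with Lipschitz $\sigma'$ is bounded by $C$ via (C3), the viscous term by $C\tau\|\delta_{h}\varphi_{n}\|_H$, and the $\lambda_{\ep}'$-term by $C\|\theta_{n+1}\|_H$ (using \eqref{lamep2}, the uniform $V$-bound on the $\varphi$'s and the Sobolev embedding $V\hookrightarrow L^6$).

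Putting these together produces $\int_\Omega|\beta_{\ep}(\varphi_{n+1})|\le C\bigl(1+\|\nabla\mu_{n+1}\|_H+\tau\|\delta_{h}\varphi_{n}\|_H+\|\theta_{n+1}\|_H\bigr)$, hence $\mathfrak{m}_{n+1}^2\le C\bigl(1+\|\nabla\mu_{n+1}\|_H^2+\tau^2\|\delta_{h}\varphi_{n}\|_H^2+\|\theta_{n+1}\|_H^2\bigr)$. Summing with weight $h$ over $n=0,\dots,N-1$ and using the gradient bound just obtained together with the estimates $\|\overline{\theta}_h\|_{L^2(0,T;V)}^2\le C$ and $\tau\|\partial_t\hat{\varphi}_h\|_{L^2(0,T;H)}^2\le C$ from Lemma~\ref{firstestiPtaueph} controls every term; in particular the viscous contribution satisfies $h\sum_n\tau^2\|\delta_{h}\varphi_{n}\|_H^2=\tau\bigl(\tau\|\partial_t\hat{\varphi}_h\|_{L^2(0,T;H)}^2\bigr)\le\overline{\tau}\,C$, which stays bounded precisely because $\tau\in(0,\overline{\tau})$. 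This gives $h\sum_n\mathfrak{m}_{n+1}^2\le C$ and, combined with the gradient bound, the asserted $\|\overline{\mu}_h\|_{L^2(0,T;V)}^2\le C$. I expect the genuine obstacle to be the $L^1$ bound on $\beta_{\ep}(\varphi_{n+1})$, where one must both exploit (C6) and cope with the fact that, unlike in the continuous problem, the spatial mean of $\varphi_{n+1}$ is not exactly $m_{0}$; fortunately the resulting defect $-h|\Omega|\,\mathfrak{m}_{n+1}^2$ enters with a favorable sign and costs nothing.
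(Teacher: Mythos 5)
Your argument is correct and follows essentially the same route as the paper: the gradient of $\mu_{n+1}$ is controlled by the $V_0^*$-bound on $\delta_h\varphi_n+h\delta_h\mu_n$ from Lemma \ref{firstestiPtaueph}, the mean of $\mu_{n+1}$ is reduced to an $L^1$ bound on $\beta_\ep(\varphi_{n+1})$ obtained by testing \eqref{dfPtauepnsol3} against (a zero-mean correction of) $\varphi_{n+1}-m_0$ together with the interior-point condition (C6) and the perturbed conservation law $\int_\Omega(\varphi_j+h\mu_j)=|\Omega|m_0$, and the conclusion follows from Poincar\'e--Wirtinger. The only (immaterial) difference is that the paper incorporates the $O(h)$ defect by using the exactly mean-free test function $\varphi_{n+1}+h\mu_{n+1}-m_0$, whereas you keep $\varphi_{n+1}-m_0$ and discard the resulting term $-h|\Omega|\mathfrak{m}_{n+1}^2$, which indeed has a favorable sign.
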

\begin{proof}
Since we can obtain the identity 
\begin{align}\label{b1}
h\|\nabla\mu_{n+1}\|_{H}^2 
= -h \left(\frac{\varphi_{n+1}-\varphi_{n}}{h}+\mu_{n+1}-\mu_{n}, 
                                                                              \mu_{n+1} \right)_{H} 
\end{align}
by multiplying \eqref{dfPtauepnsol2} by $h\mu_{n+1}$ 
and integrating over $\Omega$, 
we deduce from combining \eqref{a8}, \eqref{a9} and \eqref{b1} that 
\begin{align}\label{b2}
h\|\nabla\mu_{n+1}\|_{H}^2 
= h \left\|\frac{\varphi_{n+1}-\varphi_{n}}{h}+\mu_{n+1}-\mu_{n} \right\|_{V_{0}^{*}}^2.  
\end{align}
Thus \eqref{b2} and Lemma \ref{firstestiPtaueph} imply that 
there exists a constant $C_{1} > 0$ such that 
\begin{align}\label{b3}
\|\nabla\overline{\mu}_{h}\|_{L^2(0, T; H)}^2 
= \left\|\partial_{t}\hat{\varphi}_{h}
                                     +h\partial_{t}\hat{\mu}_{h} \right\|_{L^2(0, T; V_{0}^{*})}^2 
\leq C_{1} 
\end{align}
for all $h \in (0, h_{0})$, $\ep \in (0, \ep_{1})$ and $\tau > 0$.   
It follows from \eqref{tauepnint0} that 
\begin{align*}
\int_{\Omega} (\varphi_{n+1}+h\mu_{n+1}) 
= \int_{\Omega} (\varphi_{n}+h\mu_{n}) 
\end{align*}
for $n = 0, ..., N-1$, and consequently \eqref{muzerozero} enables us to infer that 
\begin{align}\label{int.varphiplushmu}
\frac{1}{|\Omega|}\int_{\Omega} (\varphi_{j}+h\mu_{j}) 
= \frac{1}{|\Omega|}\int_{\Omega} \varphi_{0} 
= m_{0}  
\end{align}
for $j = 0, 1, ..., N$.   
Multiplying \eqref{dfPtauepnsol3} by $\varphi_{n+1} + h\mu_{n+1} - m_{0}$ 
and integrating over $\Omega$ lead to the identity 
\begin{align}\label{b4}
&h\|\mu_{n+1}\|_{H}^2 + \gamma \|\nabla \varphi_{n+1}\|_{H}^2 
+ (\beta_{\ep}(\varphi_{n+1}), \varphi_{n+1}-m_{0})_{H} 
\\ \notag 
&= (\varphi_{n+1} + h\mu_{n+1} - m_{0}, \mu_{n+1})_{H} 
      - \left(\tau\frac{\varphi_{n+1}-\varphi_{n}}{h}, 
                                                              \varphi_{n+1}-m_{0} \right)_{H} 
      \\ \notag 
&\,\quad - (\sigma'(\varphi_{n+1}), \varphi_{n+1}-m_{0})_{H} 
     + (\lambda_{\ep}'(\varphi_{n})\theta_{n+1}, \varphi_{n+1}-m_{0})_{H}. 
\end{align}
Here there exist constant $c>0$ and $d>0$ such that 
\begin{align}\label{b5}
(\beta_{\ep}(\varphi_{n+1}), \varphi_{n+1}-m_{0})_{H} 
\geq c\|\beta_{\ep}(\varphi_{n+1})\|_{L^1(\Omega)} - d   
\end{align}
for all $\ep \in (0, 1]$ 
and for $n = 0, ..., N-1$  
(see, e.g., \cite[Section 5, p.\ 908]{GMS}).  
We can verify from \eqref{int.varphiplushmu} that 
\begin{align}\label{b6}
&(\varphi_{n+1} + h\mu_{n+1} - m_{0}, \mu_{n+1})_{H} 
\\ \notag 
&= \Bigl(\varphi_{n+1} + h\mu_{n+1} - m_{0}, 
               \mu_{n+1} - \frac{1}{|\Omega|}\int_{\Omega}\mu_{n+1} \Bigr)_{H} 
\\ \notag 
&= \Bigl\langle 
          \varphi_{n+1} + h\mu_{n+1} - m_{0}, 
               \mu_{n+1} - \frac{1}{|\Omega|}\int_{\Omega}\mu_{n+1} 
       \Bigr\rangle_{V_{0}^{*}, V_{0}} 
\\ \notag  
& \leq \|\varphi_{n+1} + h\mu_{n+1} - m_{0}\|_{V_{0}^{*}}
            \Bigl\|\mu_{n+1} - \frac{1}{|\Omega|}\int_{\Omega}\mu_{n+1}\Bigr\|_{V_{0}} 
\\ \notag 
&\leq \|\overline{\varphi}_{h} + h\overline{\mu}_{H}-m_{0}\|_{L^{\infty}(0, T; H)}
                                                                               \|\nabla \mu_{n+1}\|_{H}. 
\end{align}
By the Schwarz inequality we have that  
\begin{align}\label{b7}
- \left(\tau\frac{\varphi_{n+1}-\varphi_{n}}{h}, \varphi_{n+1}-m_{0} \right)_{H} 
\leq \overline{\tau}^{1/2}\tau^{1/2}\left\|\frac{\varphi_{n+1}-\varphi_{n}}{h}\right\|_{H}
                                            \|\overline{\varphi}_{h}-m_{0} \|_{L^{\infty}(0, T; H)}
\end{align}
for all $\tau \in (0, \overline{\tau})$.   
We derive from the Lipschitz continuity of $\sigma'$ that 
there exists a constant $C_{2} > 0$ satisfying 
\begin{align}\label{b8}
- (\sigma'(\varphi_{n+1}), \varphi_{n+1}-m_{0})_{H} 
\leq C_{2}(\|\overline{\varphi}_{h}\|_{L^{\infty}(0, T; H)}+1)
                                 \|\overline{\varphi}_{h}-m_{0} \|_{L^{\infty}(0, T; H)}. 
\end{align}
The continuity of the embedding $V \hookrightarrow L^3(\Omega)$ yields that 
there exists a constant $C_{3}>0$ fulfilling  
\begin{align}\label{b9}
&(\lambda_{\ep}'(\varphi_{n})\theta_{n+1}, \varphi_{n+1}-m_{0})_{H} 
\\ \notag 
&\leq \|\lambda_{\ep}'(\underline{\varphi}_{h})\|_{L^{\infty}(0, T; L^6(\Omega))}
             \|\theta_{n+1}\|_{H}
                      \|\overline{\varphi}_{h} - m_{0}\|_{L^{\infty}(0, T; L^3(\Omega))} 
\\ \notag 
&\leq C_{3}\|\lambda_{\ep}'(\underline{\varphi}_{h})\|_{L^{\infty}(0, T; L^6(\Omega))}
       \|\theta_{n+1}\|_{H}\|\overline{\varphi}_{h} - m_{0}\|_{L^{\infty}(0, T; V)}.  
\end{align}
On the other hand, by \eqref{lamep1} and \eqref{lamep2} 
there exists a constant $C_{\lambda} > 0$ such that 
\begin{align*}
|\lambda_{\ep}'(r)| \leq C_{\lambda}(1 + |r|)
\end{align*}
for all $\ep \in (0, 1]$ and all $r \in \mathbb{R}$, 
whence we infer from 
the continuity of the embedding $V \hookrightarrow L^6(\Omega)$ 
and Lemma \ref{firstestiPtaueph} that 
\begin{align}\label{b10}
\|\lambda_{\ep}'(\underline{\varphi}_{h})\|_{L^{\infty}(0, T; L^6(\Omega))} 
\leq \overline{C}
\end{align}
for all $h \in (0, h_{0})$, $\ep \in (0, \ep_{1})$, $\tau>0$ and 
for some constant $\overline{C} > 0$. 
Therefore, 
combining \eqref{b3}, \eqref{b4}-\eqref{b10} and Lemma \ref{firstestiPtaueph}, 
we can deduce that 
there exists a constant $C_{4} > 0$ satisfying 
\begin{align}\label{b11}
\|\beta_{\ep}(\overline{\varphi}_{h})\|_{L^2(0, T; L^1(\Omega))} \leq C_{4}
\end{align}
for all $h \in (0, h_{0})$, $\ep \in (0, \ep_{1})$ and $\tau \in (0, \overline{\tau})$.   
Next integrating \eqref{dfPtauepnsol3} over $\Omega$ leads to the identity  
\begin{align}\label{b12}
\int_{\Omega}\overline{\mu}_{h}(t) 
=  \tau\int_{\Omega}\partial_{t} \hat{\varphi}_{h} (t) 
   + \int_{\Omega}\beta_{\ep}(\overline{\varphi}_{h}(t)) 
   + \int_{\Omega}\sigma'(\overline{\varphi}_{h}(t)) 
   - \int_{\Omega}\lambda_{\ep}'(\underline{\varphi}_{h}(t))\overline{\theta}_{h}(t).   
\end{align}
From \eqref{dfPtauepnsol2} we have  
\begin{align}\label{b13}
\tau\int_{\Omega}\partial_{t} \hat{\varphi}_{h} (t) 
= -h\int_{\Omega}\partial_{t} \hat{\mu}_{h} (t).  
\end{align}
It follows from \eqref{b10} that 
\begin{align}\label{b14}
\left|
- \int_{\Omega}\lambda_{\ep}'(\underline{\varphi}_{h}(t))\overline{\theta}_{h}(t)
\right| 
\leq \|\lambda_{\ep}'(\underline{\varphi}_{h})\|_{L^{\infty}(0, T; L^6(\Omega))}
                                   \|\overline{\theta}_{h}(t)\|_{L^{6/5}(\Omega)} 
\leq C_{5}\|\overline{\theta}_{h}(t)\|_{H} 
\end{align}
for all $h \in (0, h_{0})$, $\ep \in (0, \ep_{1})$ 
and for some constant $C_{5} > 0$. 
Thus we can conclude the proof of Lemma \ref{secondestiPtaueph} 
by virtue of \eqref{b3}, \eqref{b11}-\eqref{b14}, Lemma \ref{firstestiPtaueph} 
and the Poincar\'e--Wirtinger inequality. 
\end{proof}

\begin{lem}\label{thirdestiPtaueph}
Let $h_{0}$, $\ep_{1}$ be as in Lemma \ref{firstestiPtaueph}.   
Then there exists a constant $C>0$ depending on the data such that  
$$
\|\partial_{t}\hat{\varphi}_{h}\|_{L^2(0, T; V^{*})}^2 \leq C 
$$
for all $h \in (0, h_{0})$, $\ep \in (0, \ep_{1})$ and $\tau > 0$.     
\end{lem}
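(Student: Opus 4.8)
The plan is to read off $\partial_{t}\hat{\varphi}_{h}$ directly from the discrete mass balance \eqref{dfPtauepnsol2} and to estimate it in the $V^{*}$ norm. The point is that the only control on $\partial_{t}\hat{\varphi}_{h}$ in $L^2(0, T; H)$ furnished by Lemma \ref{firstestiPtaueph} carries the factor $\tau$, which degenerates as $\tau\searrow0$; hence a uniform bound must be sought in the weaker space $V^{*}$.

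First I would test \eqref{dfPtauepnsol2} against an arbitrary $v \in V$: multiplying by $v$, integrating over $\Omega$, and integrating by parts the term $-\Delta\mu_{n+1}$ with the help of the no-flux condition $\partial_{\nu}\mu_{n+1}=0$ from \ref{Ptauepn}, I obtain
$$
(\delta_{h}\varphi_{n}, v)_{H} + h(\delta_{h}\mu_{n}, v)_{H} + \int_{\Omega}\nabla\mu_{n+1}\cdot\nabla v = 0 .
$$
Recasting this through the notation \eqref{hat1}--\eqref{line2}, for a.a.\ $t\in(0,T)$ and all $v\in V$ it reads
$$
\langle\partial_{t}\hat{\varphi}_{h}, v\rangle_{V^{*}, V} = -\,h\,(\partial_{t}\hat{\mu}_{h}, v)_{H} - \int_{\Omega}\nabla\overline{\mu}_{h}\cdot\nabla v ,
$$
where I have used that $\partial_{t}\hat{\varphi}_{h}\in H\subset V^{*}$.

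Next I would estimate the right-hand side by the Schwarz inequality, which gives
$$
|\langle\partial_{t}\hat{\varphi}_{h}, v\rangle_{V^{*}, V}| \leq \bigl(h\,\|\partial_{t}\hat{\mu}_{h}\|_{H} + \|\nabla\overline{\mu}_{h}\|_{H}\bigr)\|v\|_{V} ;
$$
taking the supremum over $v\in V$ with $\|v\|_{V}\le1$, squaring, and integrating over $(0,T)$ yields
$$
\|\partial_{t}\hat{\varphi}_{h}\|_{L^2(0, T; V^{*})}^2 \leq 2h^2\|\partial_{t}\hat{\mu}_{h}\|_{L^2(0, T; H)}^2 + 2\|\overline{\mu}_{h}\|_{L^2(0, T; V)}^2 .
$$

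Finally I would invoke the two preceding lemmas: the quantity $h^2\|\partial_{t}\hat{\mu}_{h}\|_{L^2(0, T; H)}^2$ is bounded by a constant by Lemma \ref{firstestiPtaueph}, while $\|\overline{\mu}_{h}\|_{L^2(0, T; V)}^2\le C$ is exactly the content of Lemma \ref{secondestiPtaueph}, both uniformly in $h\in(0, h_{0})$, $\ep\in(0, \ep_{1})$ and $\tau$; this gives the claim. The only point requiring care is that the factor $h$ must be kept attached to $\partial_{t}\hat{\mu}_{h}$ throughout, since Lemma \ref{firstestiPtaueph} controls only $h^2\|\partial_{t}\hat{\mu}_{h}\|_{L^2(0, T; H)}^2$ and not $\|\partial_{t}\hat{\mu}_{h}\|_{L^2(0, T; H)}^2$ alone, so that separating the two would produce an estimate blowing up as $h\searrow0$. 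Otherwise there is no genuine analytic obstacle: the result is a \sfw\ consequence of the discrete mass balance together with Lemmas \ref{firstestiPtaueph} and \ref{secondestiPtaueph}.
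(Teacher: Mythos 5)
Your argument is essentially the paper's: the paper tests the discrete mass balance with $F^{-1}\partial_t\hat{\varphi}_h$ and uses \eqref{defF}, \eqref{innerVstar}, which is the same duality estimate you obtain by testing against a general $v\in V$ and taking the supremum, and the resulting bound $\|\partial_t\hat{\varphi}_h\|_{V^*}^2\le 2h^2\|\partial_t\hat{\mu}_h\|_H^2+2\|\nabla\overline{\mu}_h\|_H^2$ is identical. One caveat: the lemma asserts the bound for \emph{all} $\tau>0$, but you invoke Lemma \ref{secondestiPtaueph}, which is only stated for $\tau\in(0,\overline{\tau})$ (its proof of the mean-value bound on $\overline{\mu}_h$ uses $\tau\le\overline{\tau}$); since your estimate only needs $\|\nabla\overline{\mu}_h\|_{L^2(0,T;H)}$ and not the full $V$-norm, you should instead cite the gradient identity \eqref{b3}, which is exactly what the paper does and which holds for all $\tau>0$. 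With that substitution your proof matches the paper's range of validity.
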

\begin{proof}
We multiply \eqref{dfPtauepnsol2} by $F^{-1}\partial_{t}\hat{\varphi}_{h}(t)$, 
integrate over $\Omega$ and recall \eqref{defF} and \eqref{innerVstar} 
to infer that 
\begin{align*}
\|\partial_{t}\hat{\varphi}_{h}(t)\|_{V^{*}}^2 
&= - h(\partial_{t}\hat{\mu}_{h}(t), F^{-1}\partial_{t}\hat{\varphi}_{h}(t))_{H} 
     -\int_{\Omega} \nabla\overline{\mu}_{h}(t) 
                              \cdot \nabla F^{-1}\partial_{t}\hat{\varphi}_{h}(t)
\\ \notag 
& \leq  h^2\|\partial_{t}\hat{\mu}_{h}(t)\|_{H}^2
          + \|\nabla\overline{\mu}_{h}(t)\|_{H}^2 
          + \frac{1}{2}\|F^{-1}\partial_{t}\hat{\varphi}_{h}(t)\|_{V}^2 
\\ \notag 
&=     h^2\|\partial_{t}\hat{\mu}_{h}(t)\|_{H}^2
         +\|\nabla\overline{\mu}_{h}(t) \|_{H}^2 
         + \frac{1}{2}\|\partial_{t}\hat{\varphi}_{h}(t)\|_{V^{*}}^2. 
\end{align*}
Hence we can conclude that Lemma \ref{thirdestiPtaueph} holds by 
Lemma \ref{firstestiPtaueph} and \eqref{b3}.   
\end{proof}

\begin{lem}\label{4estiPtaueph}
Let $h_{0}$, $\ep_{1}$ be as in Lemma \ref{firstestiPtaueph} 
and let $\overline{\tau}$ be as in Theorem \ref{maintheorem1}.   
Then there exists a constant $C>0$ depending on the data such that  
\begin{align*}
\|\beta_{\ep}(\overline{\varphi}_{h})\|_{L^2(0, T; H)}^2 
\leq C
\end{align*}
for all $h \in (0, h_{0})$, $\ep \in (0, \ep_{1})$ and $\tau \in (0, \overline{\tau})$.  
\end{lem}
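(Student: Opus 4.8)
The plan is to test the third equation \eqref{dfPtauepnsol3} with $\beta_{\ep}(\varphi_{n+1})$, the standard device for upgrading the $L^2(0,T;L^1(\Omega))$ bound on $\beta_{\ep}(\overline{\varphi}_h)$ already furnished by \eqref{b11} to a genuine $L^2(0,T;H)$ bound. Since $\varphi_{n+1}\in W$ and $\beta_{\ep}$ is Lipschitz and nondecreasing, we have $\beta_{\ep}(\varphi_{n+1})\in V$; multiplying \eqref{dfPtauepnsol3} by $\beta_{\ep}(\varphi_{n+1})$, integrating over $\Omega$ and integrating the Laplacian term by parts using $\partial_{\nu}\varphi_{n+1}=0$, I would obtain
\begin{align*}
\int_{\Omega}|\beta_{\ep}(\varphi_{n+1})|^2
+ \gamma\int_{\Omega}\nabla\varphi_{n+1}\cdot\nabla\beta_{\ep}(\varphi_{n+1})
&= \int_{\Omega}\mu_{n+1}\,\beta_{\ep}(\varphi_{n+1})
   - \tau\int_{\Omega}(\delta_{h}\varphi_{n})\,\beta_{\ep}(\varphi_{n+1}) \\
&\quad - \int_{\Omega}\sigma'(\varphi_{n+1})\,\beta_{\ep}(\varphi_{n+1})
   + \int_{\Omega}\lambda_{\ep}'(\varphi_{n})\theta_{n+1}\,\beta_{\ep}(\varphi_{n+1}).
\end{align*}
The second term on the left is nonnegative, because the monotonicity of $\beta_{\ep}$ gives $\nabla\varphi_{n+1}\cdot\nabla\beta_{\ep}(\varphi_{n+1})\geq 0$ a.e.; hence it may be dropped, leaving $\|\beta_{\ep}(\varphi_{n+1})\|_{H}^2$ alone on the left.

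It then remains to estimate the four terms on the right by Young's inequality so as to absorb a small multiple of $\|\beta_{\ep}(\varphi_{n+1})\|_{H}^2$ into the left-hand side. The $\mu_{n+1}$ term I would bound by $\tfrac18\|\beta_{\ep}(\varphi_{n+1})\|_{H}^2 + C\|\mu_{n+1}\|_{H}^2$, the latter summable thanks to Lemma \ref{secondestiPtaueph}. The viscous term is estimated by $\tfrac18\|\beta_{\ep}(\varphi_{n+1})\|_{H}^2 + C\tau^2\|\delta_{h}\varphi_{n}\|_{H}^2$; after summation in $n$ with weight $h$ the factor $\tau^2$ is rewritten as $\tau\cdot\tau$, one $\tau$ producing $\tau h\sum_n\|\delta_{h}\varphi_{n}\|_{H}^2\leq C$ from Lemma \ref{firstestiPtaueph} and the other being bounded by $\overline{\tau}$. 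The $\sigma'$ term is handled by the Lipschitz continuity of $\sigma'$ (so that $\|\sigma'(\varphi_{n+1})\|_{H}\leq C(1+\|\varphi_{n+1}\|_{V})$) together with the $L^{\infty}(0,T;V)$ bound on $\overline{\varphi}_h$ from Lemma \ref{firstestiPtaueph}.

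The main obstacle is the temperature-coupling term $\int_{\Omega}\lambda_{\ep}'(\varphi_{n})\theta_{n+1}\,\beta_{\ep}(\varphi_{n+1})$. I would control it by $\tfrac18\|\beta_{\ep}(\varphi_{n+1})\|_{H}^2 + C\|\lambda_{\ep}'(\varphi_{n})\theta_{n+1}\|_{H}^2$ and then exploit the regularity of $\theta$: by Hölder's inequality $\|\lambda_{\ep}'(\varphi_{n})\theta_{n+1}\|_{H}\leq\|\lambda_{\ep}'(\varphi_{n})\|_{L^6(\Omega)}\|\theta_{n+1}\|_{L^3(\Omega)}$, the uniform bound \eqref{b10} on $\|\lambda_{\ep}'(\underline{\varphi}_h)\|_{L^{\infty}(0,T;L^6(\Omega))}$, and the embedding $V\hookrightarrow L^3(\Omega)$, one gets $\|\lambda_{\ep}'(\varphi_{n})\theta_{n+1}\|_{H}^2\leq C\|\theta_{n+1}\|_{V}^2$, which sums (with weight $h$) to $C\|\overline{\theta}_h\|_{L^2(0,T;V)}^2\leq C$ by Lemma \ref{firstestiPtaueph}. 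Here it is essential that the uniform $L^2(0,T;V)$ control of the temperature, rather than a mere $L^2(0,T;H)$ control, is available, since the product of $\theta$ with the (only $L^6$-bounded) factor $\lambda_{\ep}'(\varphi_n)$ must land back in $H$.

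Collecting all contributions, absorbing the four $\tfrac18$-terms, summing over $n=0,\dots,N-1$ with weight $h$ and passing to the interpolant notation via \eqref{line2}, I would arrive at $\|\beta_{\ep}(\overline{\varphi}_h)\|_{L^2(0,T;H)}^2\leq C$ with $C$ independent of $h\in(0,h_0)$, $\ep\in(0,\ep_1)$ and $\tau\in(0,\overline{\tau})$, which is the assertion.
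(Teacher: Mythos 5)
Your proposal is correct and follows essentially the same route as the paper: test \eqref{dfPtauepnsol3} with $\beta_{\ep}(\overline{\varphi}_{h})$, drop the nonnegative term $\gamma\int_{\Omega}\beta_{\ep}'(\overline{\varphi}_{h})|\nabla\overline{\varphi}_{h}|^2$ by monotonicity, absorb small multiples of $\|\beta_{\ep}(\overline{\varphi}_{h})\|_{H}^2$ via Young, and control the coupling term through the \Holder{} splitting $L^6\times L^3\times L^2$ together with \eqref{b10}, the embedding $V\hookrightarrow L^3(\Omega)$ and the $L^2(0,T;V)$ bound on $\overline{\theta}_{h}$ from Lemma \ref{firstestiPtaueph}. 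The treatment of the viscous term via $\tau^2=\overline{\tau}\cdot\tau$ and of the $\mu$ and $\sigma'$ terms also matches the paper's estimates \eqref{kuku1}--\eqref{kuku2}.
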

\begin{proof}
We see from \eqref{dfPtauepnsol3} that 
\begin{align}\label{kuku1}
&\|\beta_{\ep}(\overline{\varphi}_{h} (t))\|_{H}^2 
\\ \notag 
&=\bigl(\beta_{\ep}(\overline{\varphi}_{h} (t)), 
             \overline{\mu}_{h} (t)-\tau\partial_{t}\hat{\varphi}_{h} (t)
                                               -\sigma'(\overline{\varphi}_{h} (t)) \bigr)_{H} 
\\ \notag 
&\,\quad -\gamma\int_{\Omega}\beta_{\ep}'(\overline{\varphi}_{h} (t))
                                                        |\nabla \overline{\varphi}_{h} (t)|^2 
    + \int_{\Omega}\beta_{\ep}(\overline{\varphi}_{h} (t))
                   \lambda_{\ep}'(\underline{\varphi}_{h} (t))\overline{\theta}_{h} (t) 
\\ \notag 
&\leq \frac{3}{8}\|\beta_{\ep}(\overline{\varphi}_{h} (t))\|_{H}^2 
         + 2(\|\overline{\mu}_{h} (t)\|_{H}^2 
                + \overline{\tau}\tau\|\partial_{t}\hat{\varphi}_{h} (t)\|_{H}^2   
                 + \|\sigma'(\overline{\varphi}_{h} (t))\|_{H}^2) 
\\ \notag 
&\,\quad + \|\beta_{\ep}(\overline{\varphi}_{h} (t))\|_{H}
                     \|\lambda_{\ep}'(\underline{\varphi}_{h} (t))\|_{L^6(\Omega)} 
                                                    \|\overline{\theta}_{h} (t)\|_{L^3(\Omega)}
\end{align} 
for all $h \in (0, h_{0})$, $\ep \in (0, \ep_{1})$ and $\tau \in (0, \overline{\tau})$.   
Here the continuity of the embedding $V \hookrightarrow L^3(\Omega)$ 
yields that 
\begin{align}\label{kuku2}
&\|\beta_{\ep}(\overline{\varphi}_{h} (t))\|_{H}
                     \|\lambda_{\ep}'(\underline{\varphi}_{h} (t))\|_{L^6(\Omega)} 
                                                    \|\overline{\theta}_{h} (t)\|_{L^3(\Omega)} 
\\ \notag 
&\leq C_{1}\|\beta_{\ep}(\overline{\varphi}_{h} (t))\|_{H}
          \|\lambda_{\ep}'(\underline{\varphi}_{h})\|_{L^{\infty}(0, T; L^6(\Omega))} 
                                                                        \|\overline{\theta}_{h} (t)\|_{V} 
\\ \notag 
&\leq \frac{1}{8}\|\beta_{\ep}(\overline{\varphi}_{h} (t))\|_{H}^2 
         + 2C_{1}^2\|\lambda_{\ep}'(\underline{\varphi}_{h})
                                        \|_{L^{\infty}(0, T; L^6(\Omega))}^2 
                                                                    \|\overline{\theta}_{h} (t)\|_{V}^2 
\end{align}
for all $h \in (0, h_{0})$, $\ep \in (0, \ep_{1})$, 
a.a.\ $t \in (0, T)$, and for some constant $C_{1} > 0$. 
Hence combining \eqref{b10}, \eqref{kuku1}, \eqref{kuku2}, 
Lemmas \ref{firstestiPtaueph} and \ref{secondestiPtaueph} 
leads to Lemma \ref{4estiPtaueph}.  
\end{proof}

\begin{lem}\label{5estiPtaueph}
Let $h_{0}$, $\ep_{1}$ be as in Lemma \ref{firstestiPtaueph} 
and let $\overline{\tau}$ be as in Theorem \ref{maintheorem1}.   
Then there exists a constant $C>0$ depending on the data such that  
$$
\|\overline{\varphi}_{h}\|_{L^2(0, T; W)}^2 \leq C 
$$
for all $h \in (0, h_{0})$, $\ep \in (0, \ep_{1})$ and $\tau \in (0, \overline{\tau})$.  
\end{lem}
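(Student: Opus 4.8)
The plan is to read off an elliptic equation for $\overline{\varphi}_{h}$ from the third equation \eqref{dfPtauepnsol3} and then invoke elliptic regularity. Rewriting that equation in terms of the interpolants \eqref{line1}--\eqref{line2}, I would isolate the Laplacian as
\begin{align*}
-\gamma\Delta\overline{\varphi}_{h}
= \overline{\mu}_{h} - \tau\partial_{t}\hat{\varphi}_{h}
  - \beta_{\ep}(\overline{\varphi}_{h}) - \sigma'(\overline{\varphi}_{h})
  + \lambda_{\ep}'(\underline{\varphi}_{h})\overline{\theta}_{h}
\quad \mbox{a.e.\ on}\ \Omega\times(0, T),
\end{align*}
so that the whole task reduces to bounding the right-hand side in $L^2(0, T; H)$ uniformly in $h$, $\ep$ and $\tau$.

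First I would dispose of the five terms one by one. The chemical potential $\overline{\mu}_{h}$ is controlled in $L^2(0, T; V) \hookrightarrow L^2(0, T; H)$ by Lemma \ref{secondestiPtaueph}. For the viscosity term I would write $\|\tau\partial_{t}\hat{\varphi}_{h}\|_{L^2(0, T; H)}^2 = \tau\bigl(\tau\|\partial_{t}\hat{\varphi}_{h}\|_{L^2(0, T; H)}^2\bigr) \leq \overline{\tau}\,C$, using the bound on $\tau\|\partial_{t}\hat{\varphi}_{h}\|_{L^2(0, T; H)}^2$ from Lemma \ref{firstestiPtaueph} together with $\tau \in (0, \overline{\tau})$. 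The term $\beta_{\ep}(\overline{\varphi}_{h})$ is exactly what Lemma \ref{4estiPtaueph} provides. Since $\sigma'$ is Lipschitz by (C3) and $\overline{\varphi}_{h}$ is bounded in $L^{\infty}(0, T; V) \hookrightarrow L^{\infty}(0, T; H)$ by Lemma \ref{firstestiPtaueph}, the term $\sigma'(\overline{\varphi}_{h})$ is bounded in $L^{\infty}(0, T; H)$, hence in $L^2(0, T; H)$.

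The only term requiring a little care is the coupling $\lambda_{\ep}'(\underline{\varphi}_{h})\overline{\theta}_{h}$. Here I would use the H\"older inequality with exponents $6$ and $3$, namely $\|\lambda_{\ep}'(\underline{\varphi}_{h})\overline{\theta}_{h}\|_{H} \leq \|\lambda_{\ep}'(\underline{\varphi}_{h})\|_{L^6(\Omega)}\|\overline{\theta}_{h}\|_{L^3(\Omega)}$, then control the first factor by \eqref{b10} and the second by the continuity of the embedding $V \hookrightarrow L^3(\Omega)$. This yields
\begin{align*}
\|\lambda_{\ep}'(\underline{\varphi}_{h})\overline{\theta}_{h}\|_{L^2(0, T; H)}^2
\leq C\,\overline{C}^2 \int_0^T \|\overline{\theta}_{h}(t)\|_{V}^2\,dt
= C\,\overline{C}^2\|\overline{\theta}_{h}\|_{L^2(0, T; V)}^2,
\end{align*}
which is bounded by Lemma \ref{firstestiPtaueph}. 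Combining the five bounds shows that $\Delta\overline{\varphi}_{h}$ is bounded in $L^2(0, T; H)$ uniformly in the three parameters.

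Finally I would close the argument by elliptic regularity: since $\overline{\varphi}_{h}(t) \in W$ satisfies the homogeneous Neumann condition $\partial_{\nu}\overline{\varphi}_{h} = 0$, one has $\|\overline{\varphi}_{h}(t)\|_{H^2(\Omega)} \leq C\bigl(\|\Delta\overline{\varphi}_{h}(t)\|_{H} + \|\overline{\varphi}_{h}(t)\|_{H}\bigr)$ for a.a.\ $t$; integrating in time and using the just-obtained bound together with the $L^{\infty}(0, T; V) \hookrightarrow L^2(0, T; H)$ bound on $\overline{\varphi}_{h}$ gives the desired estimate. I expect the coupling term $\lambda_{\ep}'(\underline{\varphi}_{h})\overline{\theta}_{h}$ to be the only genuinely delicate step, precisely because $\overline{\theta}_{h}$ is only controlled in $L^2(0, T; V)$ and in no better space; everything else is a direct bookkeeping of the previous lemmas together with standard elliptic regularity.
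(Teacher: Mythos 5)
Your proposal is correct and follows essentially the same route as the paper: isolate $\gamma\Delta\overline{\varphi}_{h}$ from \eqref{dfPtauepnsol3}, bound the resulting five terms via Lemmas \ref{firstestiPtaueph}, \ref{secondestiPtaueph}, \ref{4estiPtaueph}, treat the coupling term by the \Holder\ inequality with \eqref{b10} and the embedding $V \hookrightarrow L^3(\Omega)$, and conclude by elliptic regularity. The paper leaves the final elliptic-regularity step implicit, but your handling of every term, including the $\tau$-weighted viscosity term, matches its argument.
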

\begin{proof}
We derive from \eqref{dfPtauepnsol3} that 
\begin{align}\label{pipi1}
\gamma\|\Delta\overline{\varphi}_{h}(t)\|_{H} 
&\leq \|\overline{\mu}_{h}(t)\|_{H} 
         + \tau\|\partial_{t}\hat{\varphi}_{h}(t)\|_{H} 
         + \|\beta_{\ep}(\overline{\varphi}_{h}(t))\|_{H} 
\\ \notag 
&\,\quad + \|\sigma'(\overline{\varphi}_{h}(t))\|_{H} 
         + \|\lambda_{\ep}'(\underline{\varphi}_{h}(t))\overline{\theta}_{h}(t)\|_{H}. 
\end{align}
On the other hand, 
by the continuity of the embedding $V \hookrightarrow L^3(\Omega)$ 
there exists a constant $C_{1} > 0$ such that 
\begin{align}\label{pipi2}
\|\lambda_{\ep}'(\underline{\varphi}_{h}(t))\overline{\theta}_{h}(t)\|_{H}
&\leq \|\lambda_{\ep}'(\underline{\varphi}_{h})\|_{L^{\infty}(0, T; L^6(\Omega))}
                                                               \|\theta_{\ep}(t)\|_{L^3(\Omega)} 
\\ \notag 
&\leq C_{1}\|\lambda_{\ep}'(\underline{\varphi}_{h})\|_{L^{\infty}(0, T; L^6(\Omega))}
                                                                                   \|\theta_{\ep}(t)\|_{V}.  
\end{align}
Thus, thanks to \eqref{b10}, \eqref{pipi1}, \eqref{pipi2}, 
Lemmas \ref{firstestiPtaueph}, \ref{secondestiPtaueph} and \ref{4estiPtaueph}, 
we can obtain Lemma \ref{5estiPtaueph}.
\end{proof}

\begin{lem}\label{6estiPtaueph}
Let $h_{0}$, $\ep_{1}$ be as in Lemma \ref{firstestiPtaueph}. 
Then there exists a constant $C>0$ depending on the data such that  
$$
\ep\|\overline{u}_{h}\|_{L^{\infty}(0, T; H)}^2 
\leq C 
$$
for all $h \in (0, h_{0})$, $\ep \in (0, \ep_{1})$ and $\tau > 0$.   
\end{lem}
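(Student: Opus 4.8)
The plan is to exploit the defining decomposition $\mbox{\rm Ln$_{\ep}$}(r) = \ep r + \ln_{\ep}(r)$, so that on the discrete solution one has $\overline{u}_{h} = \mbox{\rm Ln$_{\ep}$}(\overline{\theta}_{h}) = \ep\overline{\theta}_{h} + \ln_{\ep}(\overline{\theta}_{h})$ a.e.\ in $\Omega \times (0,T)$. The whole estimate then reduces to controlling the two summands separately, and both have already been bounded in Lemma \ref{firstestiPtaueph}.

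Concretely, first I would apply the elementary inequality $(a+b)^2 \leq 2a^2 + 2b^2$ pointwise and integrate over $\Omega$ to get, for a.a.\ $t \in (0,T)$,
\begin{align*}
\ep\|\overline{u}_{h}(t)\|_{H}^2
&\leq 2\ep\|\ep\overline{\theta}_{h}(t)\|_{H}^2 + 2\ep\|\ln_{\ep}(\overline{\theta}_{h}(t))\|_{H}^2 \\
&= 2\ep^2\bigl(\ep\|\overline{\theta}_{h}(t)\|_{H}^2\bigr) + 2\bigl(\ep\|\ln_{\ep}(\overline{\theta}_{h}(t))\|_{H}^2\bigr).
\end{align*}
Since $\ep \in (0, \ep_{1}) \subseteq (0,1]$ we have $\ep^2 \leq 1$, so the first term is dominated by $2\,\ep\|\overline{\theta}_{h}(t)\|_{H}^2$. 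Taking the essential supremum over $t$ and invoking the two bounds $\ep\|\overline{\theta}_{h}\|_{L^{\infty}(0,T;H)}^2 \leq C$ and $\ep\|\ln_{\ep}(\overline{\theta}_{h})\|_{L^{\infty}(0,T;H)}^2 \leq C$ from Lemma \ref{firstestiPtaueph} then yields the assertion with a new constant, uniformly for $h \in (0, h_{0})$, $\ep \in (0, \ep_{1})$ and $\tau > 0$.

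There is essentially no obstacle here: the lemma is a direct bookkeeping consequence of Lemma \ref{firstestiPtaueph}. The only point deserving a moment's care is that the definition of $\mbox{\rm Ln$_{\ep}$}$ carries an extra factor $\ep$ on the identity part, which after squaring produces $\ep^2$; it is precisely the restriction $\ep \leq 1$ that prevents this from spoiling the uniformity, letting one absorb $\ep^3\|\overline{\theta}_{h}\|_{H}^2$ into $C\,\ep\|\overline{\theta}_{h}\|_{H}^2$.
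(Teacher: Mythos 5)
Your proof is correct and is precisely the argument the paper has in mind: the paper simply declares the lemma ``an immediate consequence of \eqref{uj} and Lemma \ref{firstestiPtaueph}'', which unpacks to exactly your decomposition $\overline{u}_{h}=\ep\overline{\theta}_{h}+\ln_{\ep}(\overline{\theta}_{h})$ combined with the two $L^{\infty}(0,T;H)$ bounds from that lemma. Your extra remark about absorbing the $\ep^{2}$ factor via $\ep\leq 1$ is the right (and only) point of care.
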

\begin{proof}
This lemma is an immediate consequence of 
\eqref{uj} and Lemma \ref{firstestiPtaueph}. 
\end{proof}

\begin{lem}\label{7estiPtaueph}
Let $h_{0}$, $\ep_{1}$ be as in Lemma \ref{firstestiPtaueph}. 
Then there exists a constant $C>0$ depending on the data such that  
$$
\|c_{s}\partial_{t}\hat{u}_{h}+\lambda_{\ep}'(\underline{\varphi}_{h})
                                                \partial_{t}\hat{\varphi}_{h}\|_{L^2(0, T; V^{*})}^2
\leq C 
$$
for all $h \in (0, h_{0})$, $\ep \in (0, \ep_{1})$ and $\tau > 0$.   
\end{lem}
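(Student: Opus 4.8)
The plan is to read the required bound off directly from the weak form of the first equation of \ref{Ptaueph}, without any new energy estimate. Testing \eqref{dfPtauepnsol1} with an arbitrary $v\in V$ and rewriting everything through the interpolants introduced in \eqref{hat1}--\eqref{line2}, I obtain, for a.a.\ $t\in(0, T)$,
\begin{align*}
\bigl\langle c_{s}\partial_{t}\hat{u}_{h}+\lambda_{\ep}'(\underline{\varphi}_{h})\partial_{t}\hat{\varphi}_{h},\, v\bigr\rangle_{V^{*}, V}
&= (\overline{f}_{h}, v)_{H}
   - \eta\int_{\Omega}\nabla\overline{\theta}_{h}\cdot\nabla v
   + \int_{\Gamma}\alpha_{\Gamma}(\overline{\theta_{\Gamma}}_{h}-\overline{\theta}_{h})\,v,
\end{align*}
where I have used that $\lambda_{\ep}'(\underline{\varphi}_{h})\partial_{t}\hat{\varphi}_{h}\in H\hookrightarrow V^{*}$, so that its $H$-pairing with $v$ agrees with the $V^{*}$--$V$ duality. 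The target quantity is thus represented as an explicit functional on $V$, and it suffices to bound each of the three terms on the right-hand side by $C\|v\|_{V}$ with constants independent of $h$, $\ep$ and $\tau$.

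First I would estimate the three contributions. For the source term, $|(\overline{f}_{h}, v)_{H}|\leq\|\overline{f}_{h}\|_{H}\|v\|_{V}$, and the Jensen inequality applied to the averages $f_{n+1}$ gives $\|\overline{f}_{h}\|_{L^{2}(0, T; H)}\leq\|f\|_{L^{2}(0, T; H)}$, which is finite by (C5). For the diffusion term I write $\eta\bigl|\int_{\Omega}\nabla\overline{\theta}_{h}\cdot\nabla v\bigr|\leq\eta\|\overline{\theta}_{h}\|_{V}\|v\|_{V}$. For the boundary term I use (C4) together with the trace inequality $\|v\|_{L^{2}(\Gamma)}\leq C\|v\|_{V}$, so that $\bigl|\int_{\Gamma}\alpha_{\Gamma}(\overline{\theta_{\Gamma}}_{h}-\overline{\theta}_{h})\,v\bigr|\leq\alpha^{*}C(\|\overline{\theta_{\Gamma}}_{h}\|_{L^{2}(\Gamma)}+\|\overline{\theta}_{h}\|_{L^{2}(\Gamma)})\|v\|_{V}$, and I absorb the factor $\|\overline{\theta}_{h}\|_{L^{2}(\Gamma)}\leq C\|\overline{\theta}_{h}\|_{V}$ by the trace inequality once more. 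Collecting these yields the pointwise-in-time bound
$$
\|c_{s}\partial_{t}\hat{u}_{h}+\lambda_{\ep}'(\underline{\varphi}_{h})\partial_{t}\hat{\varphi}_{h}\|_{V^{*}}
\leq C\bigl(\|\overline{f}_{h}\|_{H}+\|\overline{\theta}_{h}\|_{V}+\|\overline{\theta_{\Gamma}}_{h}\|_{L^{2}(\Gamma)}\bigr).
$$

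Finally I would square this inequality and integrate over $(0, T)$. The $\overline{f}_{h}$-contribution is controlled by (C5); the $\overline{\theta}_{h}$-contribution equals $\|\overline{\theta}_{h}\|_{L^{2}(0, T; V)}^{2}$, which is bounded by Lemma \ref{firstestiPtaueph}; and the $\overline{\theta_{\Gamma}}_{h}$-contribution is bounded by $T|\Gamma|(\theta^{*})^{2}$ thanks to the $L^{\infty}$ bound in (C7). This gives the asserted bound uniformly in $h\in(0, h_{0})$, $\ep\in(0, \ep_{1})$ and $\tau>0$. There is no genuine obstacle in this lemma: it is a soft consequence of the $L^{2}(0, T; V)$ estimate on $\overline{\theta}_{h}$ already furnished by Lemma \ref{firstestiPtaueph}, and the only points needing care are the correct passage from the discrete identity \eqref{dfPtauepnsol1} to the interpolant form and the use of the trace inequality to absorb the boundary terms into the $V$-norm.
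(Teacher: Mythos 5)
Your proposal is correct and follows essentially the same route as the paper: the paper tests \eqref{dfPtauepnsol1} with $v=F^{-1}\bigl(c_{s}\partial_{t}\hat{u}_{h}+\lambda_{\ep}'(\underline{\varphi}_{h})\partial_{t}\hat{\varphi}_{h}\bigr)$ and absorbs via Young's inequality together with \eqref{keyineq}, which is just the concrete implementation of your duality/operator-norm argument, and both then conclude from the $L^2(0,T;V)$ bound on $\overline{\theta}_{h}$ in Lemma \ref{firstestiPtaueph} together with (C5) and (C7).
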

\begin{proof}
By \eqref{innerVstar} 
taking $v = F^{-1}(c_{s}\partial_{t}\hat{u}_{h}(t) 
                                                 + \lambda_{\ep}'(\underline{\varphi}_{h}(t))
                                                                    \partial_{t}\hat{\varphi}_{h}(t))$ 
in \eqref{dfPtauepnsol1} 
means that 
\begin{align*}
&\|c_{s}\partial_{t}\hat{u}_{h}(t) + \lambda_{\ep}'(\underline{\varphi}_{h}(t))
                                                           \partial_{t}\hat{\varphi}_{h}(t)\|_{V^{*}}^2 
\\ \notag 
&=-\eta\int_{\Omega}\nabla\overline{\theta}_{h}(t)\cdot
                    \nabla F^{-1}(c_{s}\partial_{t}\hat{u}_{h}(t) 
                                                 + \lambda_{\ep}'(\underline{\varphi}_{h}(t))
                                                                        \partial_{t}\hat{\varphi}_{h}(t)) 
\\ \notag 
&\,\quad- \int_{\Gamma}
         \alpha_{\Gamma}(\overline{\theta}_{h}(t)-\overline{\theta_{\Gamma}}_{h}(t))
                              F^{-1}(c_{s}\partial_{t}\hat{u}_{h}(t) 
                                                 + \lambda_{\ep}'(\underline{\varphi}_{h}(t))
                                                                        \partial_{t}\hat{\varphi}_{h}(t)) 
\\ \notag 
&\,\quad+\bigl(
          \overline{f}_{h}(t), F^{-1}(c_{s}\partial_{t}\hat{u}_{h}(t) 
                                                 + \lambda_{\ep}'(\underline{\varphi}_{h}(t))
                                                                        \partial_{t}\hat{\varphi}_{h}(t)) 
             \bigr)_{H}   
\end{align*}
and then using the Young inequality, \eqref{defF}, \eqref{innerVstar} 
and \eqref{keyineq} yields that 
there exists a constant $C_{1} > 0$ satisfying 
\begin{align*}
\|c_{s}\partial_{t}\hat{u}_{h}(t) + \lambda_{\ep}'(\underline{\varphi}_{h}(t))
                                                           \partial_{t}\hat{\varphi}_{h}(t)\|_{V^{*}}^2 
\leq C_{1}\|\overline{\theta}_{h}(t)\|_{V}^2  
       + C_{1}\|\overline{\theta_{\Gamma}}_{h}(t)\|_{L^2(\Gamma)}^2 
       + C_{1}\|\overline{f}_{h}(t)\|_{H}^2.  
\end{align*}
Thus it follows from (C7) and Lemma \ref{firstestiPtaueph} that 
\begin{align*}
\|c_{s}\partial_{t}\hat{u}_{h}+\lambda_{\ep}'(\underline{\varphi}_{h})
                                                \partial_{t}\hat{\varphi}_{h}\|_{L^2(0, T; V^{*})}^2 
\leq C_{2}
\end{align*}
for all $h \in (0, h_{0})$, $\ep \in (0, \ep_{1})$, $\tau > 0$  
and for some constant $C_{2} > 0$. 
\end{proof}
\begin{lem}\label{8estiPtaueph}
Let $h_{0}$, $\ep_{1}$ be as in Lemma \ref{firstestiPtaueph} 
and let $\overline{\tau}$ be as in Theorem \ref{maintheorem1}.   
Then there exists a constant $C>0$ depending on the data such that  
$$
\|\partial_{t}\hat{u}_{h}\|_{L^2(0, T; V^{*})}^2 
+\|\lambda_{\ep}'(\underline{\varphi}_{h})
                            \partial_{t}\hat{\varphi}_{h}\|_{L^2(0, T; L^{3/2}(\Omega))}^2
\leq C(1+\tau^{-1})   
$$
for all $h \in (0, h_{0})$, $\ep \in (0, \ep_{1})$ and $\tau \in (0, \overline{\tau})$.  
\end{lem}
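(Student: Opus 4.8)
The plan is to leverage the estimate already obtained in Lemma \ref{7estiPtaueph} for the \emph{sum} $c_{s}\partial_{t}\hat{u}_{h}+\lambda_{\ep}'(\underline{\varphi}_{h})\partial_{t}\hat{\varphi}_{h}$ in $L^2(0,T;V^{*})$, and to control the single term $\lambda_{\ep}'(\underline{\varphi}_{h})\partial_{t}\hat{\varphi}_{h}$ by itself; subtracting will then isolate $\partial_{t}\hat{u}_{h}$. Both terms on the left-hand side of the claim will be treated in one sweep, the factor $\tau^{-1}$ arising entirely from the unweighted time derivative of the phase variable.

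First I would estimate $\lambda_{\ep}'(\underline{\varphi}_{h})\partial_{t}\hat{\varphi}_{h}$ in $L^2(0,T;L^{3/2}(\Omega))$. The exponent $3/2$ is exactly what H\"older's inequality delivers through $\frac{1}{6}+\frac{1}{2}=\frac{2}{3}$, so that for a.a.\ $t\in(0,T)$
$$
\|\lambda_{\ep}'(\underline{\varphi}_{h}(t))\partial_{t}\hat{\varphi}_{h}(t)\|_{L^{3/2}(\Omega)}
\leq \|\lambda_{\ep}'(\underline{\varphi}_{h}(t))\|_{L^6(\Omega)}\,\|\partial_{t}\hat{\varphi}_{h}(t)\|_{H}.
$$
Squaring, integrating in time and invoking both the uniform bound \eqref{b10} on $\|\lambda_{\ep}'(\underline{\varphi}_{h})\|_{L^{\infty}(0,T;L^6(\Omega))}$ and the weighted estimate $\tau\|\partial_{t}\hat{\varphi}_{h}\|_{L^2(0,T;H)}^2\leq C$ from Lemma \ref{firstestiPtaueph}, I obtain
$$
\|\lambda_{\ep}'(\underline{\varphi}_{h})\partial_{t}\hat{\varphi}_{h}\|_{L^2(0,T;L^{3/2}(\Omega))}^2
\leq C\,\|\partial_{t}\hat{\varphi}_{h}\|_{L^2(0,T;H)}^2
\leq C\tau^{-1},
$$
which is precisely the second term of the assertion and the source of the $\tau^{-1}$.

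Next I would transfer this into $V^{*}$. Since $d\leq 3$, the embedding $V=H^1(\Omega)\hookrightarrow L^6(\Omega)$ gives by duality $L^{6/5}(\Omega)\hookrightarrow V^{*}$, and boundedness of $\Omega$ yields $L^{3/2}(\Omega)\hookrightarrow L^{6/5}(\Omega)$; hence $\lambda_{\ep}'(\underline{\varphi}_{h})\partial_{t}\hat{\varphi}_{h}$ is bounded in $L^2(0,T;V^{*})$ by a constant times $\tau^{-1}$. Writing
$$
c_{s}\partial_{t}\hat{u}_{h}
=\bigl(c_{s}\partial_{t}\hat{u}_{h}+\lambda_{\ep}'(\underline{\varphi}_{h})\partial_{t}\hat{\varphi}_{h}\bigr)
-\lambda_{\ep}'(\underline{\varphi}_{h})\partial_{t}\hat{\varphi}_{h},
$$
the first summand is bounded in $L^2(0,T;V^{*})$ by Lemma \ref{7estiPtaueph} and the second by the preceding step; the triangle inequality, followed by division by the positive constant $c_{s}$, then gives $\|\partial_{t}\hat{u}_{h}\|_{L^2(0,T;V^{*})}^2\leq C(1+\tau^{-1})$, which together with the first step completes the bound.

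Since every ingredient is already in place, I do not anticipate a genuine obstacle; the only point demanding care is the bookkeeping of the $\tau$-dependence. The loss of $\tau^{-1}$ is unavoidable here because, in contrast with the viscous dissipation $\tau\|\partial_{t}\hat{\varphi}_{h}\|_{L^2(0,T;H)}^2$ controlled in Lemma \ref{firstestiPtaueph}, the product $\lambda_{\ep}'(\underline{\varphi}_{h})\partial_{t}\hat{\varphi}_{h}$ can only be estimated through the \emph{unweighted} norm of $\partial_{t}\hat{\varphi}_{h}$, which is of order $\tau^{-1/2}$; this is also why the bound degenerates as $\tau\searrow 0$ and must be handled differently in the passage to the non-viscous limit.
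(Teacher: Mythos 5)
Your argument is correct and coincides with the paper's own proof: the same H\"older estimate $\|\lambda_{\ep}'(\underline{\varphi}_{h})\partial_{t}\hat{\varphi}_{h}\|_{L^{3/2}(\Omega)}\leq\|\lambda_{\ep}'(\underline{\varphi}_{h})\|_{L^6(\Omega)}\|\partial_{t}\hat{\varphi}_{h}\|_{H}$ combined with \eqref{b10} and the weighted bound of Lemma \ref{firstestiPtaueph} yields the $\tau^{-1}$ factor, and the $V^{*}$-bound on $\partial_{t}\hat{u}_{h}$ follows by subtraction from Lemma \ref{7estiPtaueph} via the embedding of $L^{3/2}(\Omega)$ into $V^{*}$. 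No gaps.
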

\begin{proof}
Since we have 
\begin{align*}
\|\lambda_{\ep}'(\underline{\varphi}_{h})
                            \partial_{t}\hat{\varphi}_{h}\|_{L^2(0, T; L^{3/2}(\Omega))}^2 
\leq \|\lambda_{\ep}'(\underline{\varphi}_{h})\|_{L^{\infty}(0, T; L^6(\Omega))}^2
                                                 \|\partial_{t}\hat{\varphi}_{h}\|_{L^2(0, T; H)}^2,  
\end{align*}
we can conclude that Lemma \ref{8estiPtaueph} holds 
by \eqref{b10}, Lemmas \ref{firstestiPtaueph} and \ref{7estiPtaueph}.  
\end{proof}

\begin{lem}\label{9estiPtaueph}
Let $h_{0}$, $\ep_{1}$ be as in Lemma \ref{firstestiPtaueph} 
and let $\overline{\tau}$ be as in Theorem \ref{maintheorem1}.   
Then there exists a constant $C>0$ depending on the data such that  
\begin{align*}
&\tau\|\hat{\varphi}_{h}\|_{H^1(0, T; H)}^2 
+ \|\hat{\varphi}_{h}\|_{L^{\infty}(0, T; V)}^2 
+ h\|\hat{\mu}_{h}\|_{L^{\infty}(0, T; H)}^2 
+ h^2\|\hat{\mu}_{h}\|_{H^1(0, T; H)}^2 
\\ 
&+\frac{\ep\tau}{1+\tau}\|\hat{u}_{h}\|_{H^1(0, T; V^{*})}^2   
  +\ep\|\hat{u}_{h}\|_{L^{\infty}(0, T; H)}^2 
\leq C 
\end{align*}
for all $h \in (0, h_{0})$, $\ep \in (0, \ep_{1})$ and $\tau \in (0, \overline{\tau})$.  
\end{lem}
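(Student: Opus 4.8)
The plan is to obtain each term on the left-hand side by combining the a priori bounds already established in Lemmas~\ref{firstestiPtaueph}, \ref{secondestiPtaueph}, \ref{6estiPtaueph} and \ref{8estiPtaueph} with the elementary comparison identities \eqref{rem1}--\eqref{rem6} relating the piecewise-linear interpolants $\hat\varphi_h,\hat\mu_h,\hat u_h$ to the piecewise-constant functions $\overline\varphi_h,\overline\mu_h,\overline u_h$, together with the continuity of the embedding $H\hookrightarrow V^{*}$. No new differential inequality is needed: the argument is a term-by-term bookkeeping of norms, and the whole point is to check that the prefactors on the left are chosen so that the already-known bounds suffice.

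First I would dispose of the $\hat\varphi_h$ and $\hat\mu_h$ contributions. The $L^2(0,T;H)$ part of $\|\hat\varphi_h\|_{H^1(0,T;H)}$ is handled by \eqref{rem1} together with $\|\overline\varphi_h\|_{L^\infty(0,T;V)}\le C$ from Lemma~\ref{firstestiPtaueph} (so that $\|\overline\varphi_h\|_{L^2(0,T;H)}\le C$), while the $\partial_t$ part is precisely $\tau\|\partial_t\hat\varphi_h\|_{L^2(0,T;H)}^2\le C$ from the same lemma; the extra factor $\tau$ in front of the $L^2(0,T;H)$ term is absorbed using $\tau\le\overline\tau$. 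The estimate $\|\hat\varphi_h\|_{L^\infty(0,T;V)}^2\le C$ follows from \eqref{rem2}, (C6) and Lemma~\ref{firstestiPtaueph}. For $\hat\mu_h$, identity \eqref{rem4} gives $h\|\hat\mu_h\|_{L^\infty(0,T;H)}^2=h\|\overline\mu_h\|_{L^\infty(0,T;H)}^2\le C$ by Lemma~\ref{firstestiPtaueph}; in $h^2\|\hat\mu_h\|_{H^1(0,T;H)}^2$ the $L^2(0,T;H)$ part is controlled via \eqref{rem3}, Lemma~\ref{secondestiPtaueph} and $h<1$, and the $\partial_t$ part equals $h^2\|\partial_t\hat\mu_h\|_{L^2(0,T;H)}^2\le C$, again by Lemma~\ref{firstestiPtaueph}.

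The only genuinely delicate term is $\frac{\ep\tau}{1+\tau}\|\hat u_h\|_{H^1(0,T;V^*)}^2$, whose prefactor is tuned exactly to beat the $\tau^{-1}$ blow-up in Lemma~\ref{8estiPtaueph}. For the time-derivative part I would invoke $\|\partial_t\hat u_h\|_{L^2(0,T;V^*)}^2\le C(1+\tau^{-1})$ and exploit the exact cancellation
\begin{align*}
\frac{\ep\tau}{1+\tau}\,\|\partial_t\hat u_h\|_{L^2(0,T;V^*)}^2
\le \frac{\ep\tau}{1+\tau}\,C(1+\tau^{-1})
= \ep\,C\le C,
\end{align*}
since $\frac{\tau}{1+\tau}(1+\tau^{-1})=1$. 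For the $L^2(0,T;V^*)$ part I would use $\frac{\ep\tau}{1+\tau}\le\ep$, the comparison \eqref{rem5}, the embedding $H\hookrightarrow V^{*}$, and the bound $\ep\|\overline u_h\|_{L^\infty(0,T;H)}^2\le C$ of Lemma~\ref{6estiPtaueph}, noting that $\|u_0\|_{V^*}$ is finite and uniformly bounded in $\ep$ because $\theta_0\in L^\infty(\Omega)$ with $\theta_*\le\theta_0\le\theta^*$ by (C7). Finally, $\ep\|\hat u_h\|_{L^\infty(0,T;H)}^2\le C$ follows from \eqref{rem6}, Lemma~\ref{6estiPtaueph} and the uniform bound on $\|u_0\|_H$. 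Summing the term-by-term estimates yields the claim. I expect the main (and essentially only) obstacle to be the correct use of the tuned weight $\frac{\ep\tau}{1+\tau}$ against Lemma~\ref{8estiPtaueph}; every other term is a direct transcription of an already-proven bound through the interpolation identities.
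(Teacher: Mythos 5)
Your proof is correct and follows essentially the same route as the paper, which simply combines the comparison identities \eqref{rem1}--\eqref{rem6} with Lemmas \ref{firstestiPtaueph}, \ref{6estiPtaueph} and \ref{8estiPtaueph}; in particular your cancellation $\frac{\tau}{1+\tau}(1+\tau^{-1})=1$ against Lemma \ref{8estiPtaueph} is exactly the intended mechanism. The only cosmetic difference is that you invoke Lemma \ref{secondestiPtaueph} for the term $h^{2}\|\hat{\mu}_{h}\|_{L^{2}(0,T;H)}^{2}$, which the paper instead gets directly from the bound $h\|\overline{\mu}_{h}\|_{L^{\infty}(0,T;H)}^{2}\leq C$ of Lemma \ref{firstestiPtaueph}; both are valid.
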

\begin{proof}
This lemma can be obtained by \eqref{rem1}-\eqref{rem6}, 
Lemmas \ref{firstestiPtaueph}, \ref{6estiPtaueph} and \ref{8estiPtaueph}.
\end{proof}

The following lemma asserts strong convergences of 
$\overline{f}_{h}$ and $\overline{\theta_{\Gamma}}_{h}$.  
\begin{lem}\label{conv.f.thetaGamma}
We have that 
$$
\overline{f}_{h} \to f  \quad \mbox{strongly in}\ L^2(0, T; H)
$$
and 
$$
\overline{\theta_{\Gamma}}_{h} \to \theta_{\Gamma} 
\quad \mbox{strongly in}\ L^2(0, T; L^2(\Gamma))
$$
as $h\searrow0$.
\end{lem}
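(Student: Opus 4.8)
The plan is to recognize that the maps $f\mapsto\overline f_h$ and $\theta_\Gamma\mapsto\overline{\theta_\Gamma}_h$ are the natural piecewise-constant (interval-averaging) projections associated with the partition $\{(nh,(n+1)h)\}_{n=0}^{N-1}$, and to prove strong convergence by the standard combination of a uniform bound with a density argument. For the uniform bound, note that for $t\in(nh,(n+1)h)$ one has $\overline f_h(t)=f_{n+1}=\frac1h\int_{nh}^{(n+1)h}f(s)\,ds$, so the Cauchy--Schwarz (Jensen) inequality gives
\begin{equation*}
h\|f_{n+1}\|_H^2 = h\Bigl\|\tfrac1h\int_{nh}^{(n+1)h}f(s)\,ds\Bigr\|_H^2 \le \int_{nh}^{(n+1)h}\|f(s)\|_H^2\,ds,
\end{equation*}
and summing over $n=0,\dots,N-1$ yields $\|\overline f_h\|_{L^2(0,T;H)}\le\|f\|_{L^2(0,T;H)}$. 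By linearity the same contraction estimate holds for the difference of two data, i.e.\ $\|\overline f_h-\overline g_h\|_{L^2(0,T;H)}\le\|f-g\|_{L^2(0,T;H)}$ for every $g\in L^2(0,T;H)$. For the boundary datum, (C7) ensures $\theta_\Gamma\in L^\infty(\Gamma\times(0,T))\subset L^2(0,T;L^2(\Gamma))$ (since $\Gamma$ has finite measure), and the identical averaging estimate gives $\|\overline{\theta_\Gamma}_h\|_{L^2(0,T;L^2(\Gamma))}\le\|\theta_\Gamma\|_{L^2(0,T;L^2(\Gamma))}$.

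Next I would run the density argument. Fix $\varepsilon>0$ and, using the density of $C([0,T];H)$ in $L^2(0,T;H)$, pick $g\in C([0,T];H)$ with $\|f-g\|_{L^2(0,T;H)}<\varepsilon$. For such a continuous $g$ the averages converge uniformly: writing $\omega_g$ for the modulus of continuity of $g$ on the compact interval $[0,T]$, we have for $t\in(nh,(n+1)h)$,
\begin{equation*}
\|\overline g_h(t)-g(t)\|_H = \Bigl\|\tfrac1h\int_{nh}^{(n+1)h}(g(s)-g(t))\,ds\Bigr\|_H \le \omega_g(h),
\end{equation*}
whence $\|\overline g_h-g\|_{L^2(0,T;H)}\le\sqrt T\,\omega_g(h)\to0$ as $h\searrow0$ by uniform continuity. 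Combining this with the uniform contractivity through the triangle inequality
\begin{equation*}
\|\overline f_h-f\|_{L^2(0,T;H)} \le \|\overline{(f-g)}_h\|_{L^2(0,T;H)} + \|\overline g_h-g\|_{L^2(0,T;H)} + \|g-f\|_{L^2(0,T;H)}
\end{equation*}
and letting $h\searrow0$ gives $\limsup_{h\searrow0}\|\overline f_h-f\|_{L^2(0,T;H)}\le2\varepsilon$; since $\varepsilon>0$ is arbitrary, $\overline f_h\to f$ strongly in $L^2(0,T;H)$. The convergence $\overline{\theta_\Gamma}_h\to\theta_\Gamma$ in $L^2(0,T;L^2(\Gamma))$ then follows verbatim, with $H$ replaced by $L^2(\Gamma)$ and $f$ by $\theta_\Gamma$.

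The only delicate point, and hence the step I would treat with most care, is that the partition itself depends on $h$: one cannot approximate $f$ directly by step functions adapted to a \emph{fixed} grid, because the grid refines as $h\searrow0$. This is exactly why I would approximate by continuous, grid-independent functions and exploit their uniform continuity to control $\overline g_h-g$, while absorbing the error $f-g$ by the $h$-uniform contractivity of the averaging operator. No compactness or PDE structure enters here: the statement is a purely measure-theoretic property of Steklov-type time averages, needed only to identify the limits of the source and boundary terms when passing to the limit in \ref{Ptaueph}.
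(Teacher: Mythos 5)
Your proof is correct. Note that the paper itself offers no argument for this lemma: its ``proof'' consists of the single sentence that the convergences ``hold in general'' together with a citation to \cite[Section 5]{CK1}, so there is nothing to compare step by step. What you supply is the standard (and presumably the cited) argument: the interval-averaging operator is an $h$-uniform contraction on $L^2(0,T;H)$ by Jensen's inequality, it converges pointwise-uniformly on the dense subspace $C([0,T];H)$ thanks to uniform continuity, and the triangle inequality transfers the convergence to all of $L^2(0,T;H)$; the boundary datum is handled identically with $H$ replaced by $L^2(\Gamma)$, using (C7) and the finiteness of $|\Gamma|$ to place $\theta_\Gamma$ in $L^2(0,T;L^2(\Gamma))$. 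Your remark about why one must approximate by grid-independent continuous functions rather than by step functions on a fixed partition is exactly the right point to flag, and making the lemma self-contained is arguably an improvement over the bare citation.
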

\begin{proof} 
The above convergences hold in general; 
for a proof see, for instance, \cite[Section 5]{CK1}.  
\end{proof}

\begin{prlem3.1}
Let $\ep \in (0, \ep_{1})$ and $\tau \in (0, \overline{\tau})$, 
where $\ep_{1}$ and $\overline{\tau}$ are as in Lemma \ref{firstestiPtaueph}   
and Theorem \ref{maintheorem1}, respectively.    
Then we see from Lemmas \ref{firstestiPtaueph}, \ref{secondestiPtaueph}, 
\ref{5estiPtaueph}, \ref{9estiPtaueph}, the Ascoli--Arzela theorem, 
\eqref{rem7} and \eqref{rem8} 
that there exist some functions 
\begin{align*}
&\varphi \in H^1(0, T; H) \cap L^{\infty}(0, T; V) \cap L^2(0, T; W), \\[1mm]
&\theta \in L^2(0, T; V), \\[1mm] 
&u \in H^1(0, T; V^{*}) \cap L^{\infty}(0, T; H), \\[1mm] 
&w \in L^2(0, T; L^{3/2}(\Omega)), \\[1mm] 
&\mu \in L^2(0, T; V) 
\end{align*} 
such that, possibly for a subsequence $h_{j}$,  
\begin{align}
&\hat{\varphi}_{h} \to \varphi 
\quad \mbox{weakly$^*$ in}\ H^1(0, T; H) 
\cap L^{\infty}(0, T; V), \label{Ptaueph.conv1} 
\\[1.5mm] 
&\overline{\varphi}_{h} \to \varphi 
\quad \mbox{weakly in}\ L^2(0, T; W),  \label{Ptaueph.conv2} 
\\[1.5mm] 
&\hat{\varphi}_{h} \to \varphi 
\quad \mbox{strongly in}\ C([0, T]; H), \label{Ptaueph.conv3} 
\\[1.5mm] 
&\overline{\theta}_{h} \to \theta
\quad \mbox{weakly in}\ L^2(0, T; V) \subset L^2(0, T; L^2(\Gamma)), 
\label{Ptaueph.conv4} 
\\[1.5mm] 
&\hat{u}_{h} \to u
\quad \mbox{weakly$^{*}$ in}\ H^1(0, T; V^{*}) \cap L^{\infty}(0, T; H), 
\label{Ptaueph.conv5}
\\[1.5mm] 
&\hat{u}_{h} \to u
\quad \mbox{strongly in}\ C([0, T]; V^{*}), 
\label{Ptaueph.conv6} 
\\[1.5mm] 
&\lambda_{\ep}'(\underline{\varphi}_{h})\partial_{t}\hat{\varphi}_{h} \to w  
\quad \mbox{weakly$^{*}$ in}\ L^2(0, T; L^{3/2}(\Omega)) 
                                                        \subset L^2(0, T; V^{*}), 
\label{Ptaueph.conv7} 
\\[1.5mm] 
&h\hat{\mu}_{h} \to 0 \quad \mbox{weakly in}\ H^1(0, T; H),  
\label{Ptaueph.conv8} 
\\[1.5mm] 
&\overline{\mu}_{h} \to \mu \quad \mbox{weakly in}\ L^2(0, T; V) 
\label{Ptaueph.conv9}
\end{align}
as $h = h_{j} \searrow 0$. 
Combining \eqref{Ptaueph.conv6} and \eqref{rem8} implies that 
\begin{align}
&\overline{u}_{h} \to u  
\quad \mbox{strongly in}\ L^2(0, T; V^{*})   \label{Ptaueph.conv10} 
\end{align}
as $h = h_{j} \searrow 0$. 
Also, from \eqref{Ptaueph.conv3} and \eqref{rem7} we can observe that 
\begin{align}
&\overline{\varphi}_{h} \to \varphi 
\quad \mbox{strongly in}\ L^2(0, T; H) \label{Ptaueph.conv11} 
\end{align}
as $h = h_{j} \searrow 0$. 
Moreover, it follows from Lemma \ref{firstestiPtaueph}, \eqref{Ptaueph.conv11} 
and \eqref{rem9} that   
\begin{align}\label{Ptaueph.conv12} 
&\underline{\varphi}_{h} \to \varphi 
\quad \mbox{strongly in}\ L^2(0, T; H)  
\end{align}
as $h = h_{j} \searrow 0$.  
Since from \eqref{Ptaueph.conv4} and \eqref{Ptaueph.conv10} 
it turns out that  
\begin{align*}
\int_{0}^{T} \bigl(\mbox{\rm Ln$_{\ep}$}(\overline{\theta}_{h}(t)), 
                                                             \overline{\theta}_{h}(t) \bigr)_{H}\,dt 
&= \int_{0}^{T} \langle 
                          \overline{u}_{h}(t), \overline{\theta}_{h}(t) 
                  \rangle_{V^{*}, V}\,dt 
\\ \notag 
&\to \int_{0}^{T} \langle u(t), \theta(t) \rangle_{V^{*}, V}\,dt 
= \int_{0}^{T} (u(t), \theta(t))_{H}\,dt
\end{align*}
as $h = h_{j} \searrow 0$, the identity 
\begin{align}\label{u.eq.Lnep}
u = \mbox{\rm Ln$_{\ep}$}(\theta)
\end{align} 
holds a.e.\ on $\Omega\times(0, T)$ (see, e.g., \cite[Lemma 1.3, p.\ 42]{Barbu1}). 
Now we let $\psi \in C_{\mathrm c}^{\infty}(\Omega\times(0, T))$. 
Then we derive from the Lipschitz continuity of $\lambda_{\ep}'$, 
\eqref{Ptaueph.conv1} and \eqref{Ptaueph.conv12} that 
\begin{align}\label{for.obtain.w.1}
&\Bigl|
\int_{0}^{T}
   \Bigl(\int_{\Omega}\bigl(\lambda_{\ep}'(\underline{\varphi}_{h} (t))
                                                           \partial_{t}\hat{\varphi}_{h} (t)  
           -\lambda_{\ep}'(\varphi(t))\partial_{t}\varphi(t) \bigr)\psi(t)  
   \Bigr)\,dt  
\Bigr| 
\\ \notag 
&\leq \Bigl|
           \int_{0}^{T}\bigl(\psi(t)(\lambda_{\ep}'(\underline{\varphi}_{h}(t))
                -\lambda_{\ep}'(\varphi(t))), \partial_{t}\hat{\varphi}_{h}(t)\bigr)_{H}\,dt 
\Bigr| 
\\ \notag 
&\,\quad + \Bigl|
           \int_{0}^{T}\bigl(\partial_{t}\hat{\varphi}_{h}(t)-\partial_{t}\varphi(t), 
                                                   \lambda_{\ep}'(\varphi(t))\psi(t) \bigr)_{H}\,dt 
\Bigr| 
\\ \notag 
& \to 0
\end{align}
as $h = h_{j} \searrow 0$. 
On the other hand, the convergence \eqref{Ptaueph.conv7} yields that 
\begin{align}\label{for.obtain.w.2}
\int_{0}^{T}
   \Bigl(\int_{\Omega}\lambda_{\ep}'(\underline{\varphi}_{h} (t))
                                                      \partial_{t}\hat{\varphi}_{h}(t)\psi(t)  
   \Bigr)\,dt  
&= \int_{0}^{T}
    \bigl\langle
      \lambda_{\ep}'(\underline{\varphi}_{h} (t))\partial_{t}\hat{\varphi}_{h}(t), \psi(t)
     \bigr\rangle_{L^{3/2}(\Omega), L^3(\Omega)}\,dt  
\\ \notag 
&\to \int_{0}^{T}
      \langle
      w(t), \psi(t)
      \rangle_{L^{3/2}(\Omega), L^3(\Omega)}\,dt  
\\ \notag 
&= \int_{0}^{T}\Bigl(\int_{\Omega}w(t)\psi(t) \Bigr)\,dt  
\end{align}
as $h = h_{j} \searrow 0$. 
Hence, thanks to \eqref{for.obtain.w.1} and \eqref{for.obtain.w.2},  
we can verify that  
\begin{align*}
\int_{\Omega\times(0, T)}
\bigl(w-\lambda_{\ep}'(\varphi)\partial_{t}\varphi\bigr)\psi 
= 0 
\end{align*}
for all $\psi \in C_{\mathrm c}^{\infty}(\Omega\times(0, T))$, 
which means that 
\begin{align}\label{obtain.w}
w = \lambda_{\ep}'(\varphi)\partial_{t}\varphi = \partial_{t}\lambda_{\ep}(\varphi)
\end{align}
a.e.\ on $\Omega\times(0, T)$. 
Thus in view of \ref{Ptaueph} 
we can obtain \eqref{dfPtauepsol1} from 
\eqref{Ptaueph.conv5}, \eqref{u.eq.Lnep}, 
\eqref{Ptaueph.conv7}, \eqref{obtain.w}, \eqref{Ptaueph.conv4} 
and Lemma \ref{conv.f.thetaGamma}. 
In addition, \eqref{dfPtauepsol2} is a consequence of 
\eqref{Ptaueph.conv1}, \eqref{Ptaueph.conv8} and \eqref{Ptaueph.conv9}, 
while the initial conditions \eqref{dfPtauepsol4} 
follow from \eqref{Ptaueph.conv6}, \eqref{u.eq.Lnep} and \eqref{Ptaueph.conv3}. 

Next we show that  
\begin{align}\label{Ptauephenbun}
\int_{\Omega\times(0, T)} 
                  \bigl(\mu - \tau\partial_{t}\varphi 
                          + \gamma\Delta\varphi 
                          - \beta_{\ep}(\varphi) - \sigma'(\varphi) 
                         + \lambda_{\ep}'(\varphi)\theta \bigr)\psi 
= 0
\end{align}
for all $\psi \in C_{\mathrm c}^{\infty}(\Omega\times(0, T))$. 
We have from \eqref{Ptaueph.conv1}, \eqref{Ptaueph.conv2}, 
\eqref{Ptaueph.conv4}, \eqref{Ptaueph.conv9}, \eqref{Ptaueph.conv11}, 
\eqref{Ptaueph.conv12}, 
the Lipschitz continuity of $\beta_{\ep}$ and $\lambda_{\ep}'$ 
that 
\begin{align*}
0&=\int_{0}^{T} \Bigl(\int_{\Omega} 
                   \bigl(\overline{\mu}_{h}(t) - \tau\partial_{t}\hat{\varphi}_{h}(t) 
                     + \gamma\Delta\overline{\varphi}_{h}(t) 
                     - \beta_{\ep}(\overline{\varphi}_{h}(t)) 
                     - \sigma'(\overline{\varphi}_{h}(t)) 
\\ \notag 
&\hspace{9cm} + \lambda_{\ep}'(\underline{\varphi}_{h}(t))\overline{\theta}_{h}(t) 
                                                                                                \bigr)\psi(t) 
               \Bigr)\,dt 
\\ \notag 
&=\int_{0}^{T} \bigl(\overline{\mu}_{h}(t) - \tau\partial_{t}\hat{\varphi}_{h}(t) 
                     + \gamma\Delta\overline{\varphi}_{h}(t) 
                     - \beta_{\ep}(\overline{\varphi}_{h}(t)) 
                     - \sigma'(\overline{\varphi}_{h}(t)), \psi(t) \bigr)_{H}\,dt 
\\ \notag 
&\,\quad + \int_{0}^{T} \bigl(\psi(t)\lambda_{\ep}'(\underline{\varphi}_{h}(t)), 
                                                              \overline{\theta}_{h}(t)  \bigr)_{H}\,dt 
\\[5mm] \notag 
&\to \int_{0}^{T} \bigl(\mu(t) - \tau\partial_{t}\varphi(t) 
                          + \gamma\Delta\varphi(t) 
                          - \beta_{\ep}(\varphi(t)) 
                          - \sigma'(\varphi(t)), \psi(t) \bigr)_{H}\,dt 
\\ \notag 
&\,\quad + \int_{0}^{T} \bigl(\psi(t)\lambda_{\ep}'(\varphi(t)), 
                                                                  \theta(t) \bigr)_{H}\,dt 
\end{align*}
as $h = h_{j} \searrow 0$. 
Thus \eqref{Ptauephenbun} holds. 
Then we can conclude that \eqref{dfPtauepsol3} holds.    

Therefore Lemma \ref{existPtauep} is completely proved. 
\qed
\end{prlem3.1}

\vspace{10pt}


\section{Estimates for \ref{Ptauep} and passage to the limit as $\ep\searrow0$} 
\label{Sec5}

In this section we will confirm that Theorem \ref{maintheorem1} holds. 
We will establish estimates for \ref{Ptauep} 
in order to show existence for \ref{Ptau}  
by passing to the limit in \ref{Ptauep} as $\ep\searrow0$.  
\begin{lem}\label{1estiPtauep} 
Let $\ep_{1}$ be as in Lemma \ref{firstestiPtaueph} 
and let $\overline{\tau}$ be as in Theorem \ref{maintheorem1}.  
Then there exists a constant $C>0$ depending on the data such that  
\begin{align*}
&\tau\|\partial_{t}\varphi_{\ep}\|_{L^2(0, T; H)}^2 
+ \|\varphi_{\ep}\|_{L^{\infty}(0, T; V)}^2 
+ \|\theta_{\ep}\|_{L^2(0, T; V)}^2 
+ \ep\|\theta_{\ep}\|_{L^{\infty}(0, T; H)}^2 
+ \|\partial_{t}\varphi_{\ep}\|_{L^2(0, T; V^{*})}^2 
\\ \notag 
&+ \|\mu_{\ep}\|_{L^2(0, T; V)}^2 
+ \|\beta_{\ep}(\varphi_{\ep})\|_{L^2(0, T; H)}^2 
+ \|\varphi_{\ep}\|_{L^2(0, T; W)}^2 
+ \|(c_{s}\mbox{\rm Ln$_{\ep}$}(\theta_{\ep})
                           +\lambda_{\ep}(\varphi_{\ep}))_{t}\|_{L^2(0, T; V^{*})}^2 
\\ \notag 
&+ \frac{\tau}{1+\tau}\|\partial_{t}\lambda_{\ep}(\varphi_{\ep})\|_{L^2(0, T; V^{*})}^2
+ \frac{\tau}{1+\tau}\|(\mbox{\rm Ln$_{\ep}$}(\theta_{\ep}))_{t}\|_{L^2(0, T; V^{*})}^2
\\ \notag  
&\leq C
\end{align*}
for all $\ep\in(0, \ep_{1})$ and all $\tau \in (0, \overline{\tau})$.    
\end{lem}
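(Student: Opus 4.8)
The plan is to derive every bound in the statement by transferring the uniform-in-$h$ estimates of Section \ref{Sec4} to the limit $h\searrow 0$, using weak and weak-$*$ lower semicontinuity of the norms together with the convergences \eqref{Ptaueph.conv1}--\eqref{Ptaueph.conv12} established while proving Lemma \ref{existPtauep}. Accordingly, I fix $\ep\in(0,\ep_1)$ and $\tau\in(0,\overline\tau)$ and take $(\theta_\ep,\mu_\ep,\varphi_\ep)$ to be the weak solution of \ref{Ptauep} produced there, so that $\theta_\ep=\theta$, $\mu_\ep=\mu$, $\varphi_\ep=\varphi$, while $\mbox{\rm Ln$_{\ep}$}(\theta_\ep)=u$ by \eqref{u.eq.Lnep} and $\partial_t\lambda_\ep(\varphi_\ep)=w$ by \eqref{obtain.w}. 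The decisive structural point is that the constants in Lemmas \ref{firstestiPtaueph}--\ref{9estiPtaueph} are independent of $\ep\in(0,\ep_1)$ and of $\tau\in(0,\overline\tau)$, so any bound obtained by lower semicontinuity inherits exactly this uniformity.

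First I would dispatch the terms that descend directly from a single estimate. Since $\partial_t\hat\varphi_h\to\partial_t\varphi$ weakly in $L^2(0,T;H)$ by \eqref{Ptaueph.conv1}, the bound $\tau\|\partial_t\hat\varphi_h\|_{L^2(0,T;H)}^2\le C$ of Lemma \ref{firstestiPtaueph} passes to $\tau\|\partial_t\varphi_\ep\|_{L^2(0,T;H)}^2\le C$; in the same way $\|\varphi_\ep\|_{L^\infty(0,T;V)}^2$, $\|\theta_\ep\|_{L^2(0,T;V)}^2$ and $\ep\|\theta_\ep\|_{L^\infty(0,T;H)}^2$ follow from Lemma \ref{firstestiPtaueph} via \eqref{Ptaueph.conv1}, \eqref{Ptaueph.conv4}, and the weak-$*$ boundedness of $\sqrt{\ep}\,\overline\theta_h$ in $L^\infty(0,T;H)$ (whose weak-$*$ limit is forced to be $\sqrt{\ep}\,\theta_\ep$). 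Likewise $\|\partial_t\varphi_\ep\|_{L^2(0,T;V^*)}^2$ comes from Lemma \ref{thirdestiPtaueph} and \eqref{Ptaueph.conv1}, $\|\mu_\ep\|_{L^2(0,T;V)}^2$ from Lemma \ref{secondestiPtaueph} and \eqref{Ptaueph.conv9}, and $\|\varphi_\ep\|_{L^2(0,T;W)}^2$ from Lemma \ref{5estiPtaueph} and \eqref{Ptaueph.conv2}. The one term that cannot be handled by lower semicontinuity alone is $\|\beta_\ep(\varphi_\ep)\|_{L^2(0,T;H)}^2$: here I would use that $\overline\varphi_h\to\varphi$ strongly in $L^2(0,T;H)$ by \eqref{Ptaueph.conv11} and that $\beta_\ep$ is Lipschitz, so that $\beta_\ep(\overline\varphi_h)\to\beta_\ep(\varphi_\ep)$ strongly in $L^2(0,T;H)$, and then pass the bound of Lemma \ref{4estiPtaueph}.

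Next I would treat the three $V^*$-time-derivative terms. The combination $c_s\partial_t\hat u_h+\lambda_\ep'(\underline\varphi_h)\partial_t\hat\varphi_h$ converges weakly in $L^2(0,T;V^*)$ to $c_s\partial_t u+w=(c_s\mbox{\rm Ln$_{\ep}$}(\theta_\ep)+\lambda_\ep(\varphi_\ep))_t$ by \eqref{Ptaueph.conv5}, \eqref{Ptaueph.conv7} and \eqref{obtain.w}, so Lemma \ref{7estiPtaueph} yields the bound on $\|(c_s\mbox{\rm Ln$_{\ep}$}(\theta_\ep)+\lambda_\ep(\varphi_\ep))_t\|_{L^2(0,T;V^*)}^2$. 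For the last two terms the crux is the $\tau$-bookkeeping: Lemma \ref{8estiPtaueph} controls both $\|\partial_t\hat u_h\|_{L^2(0,T;V^*)}^2$ and $\|\lambda_\ep'(\underline\varphi_h)\partial_t\hat\varphi_h\|_{L^2(0,T;L^{3/2}(\Omega))}^2$ by $C(1+\tau^{-1})$. Passing to the limit, with $\partial_t\hat u_h\to(\mbox{\rm Ln$_{\ep}$}(\theta_\ep))_t$ weakly in $L^2(0,T;V^*)$ from \eqref{Ptaueph.conv5}, and $\lambda_\ep'(\underline\varphi_h)\partial_t\hat\varphi_h\to\partial_t\lambda_\ep(\varphi_\ep)$ weakly in $L^2(0,T;L^{3/2}(\Omega))\subset L^2(0,T;V^*)$ from \eqref{Ptaueph.conv7}--\eqref{obtain.w}, gives $\|(\mbox{\rm Ln$_{\ep}$}(\theta_\ep))_t\|_{L^2(0,T;V^*)}^2+\|\partial_t\lambda_\ep(\varphi_\ep)\|_{L^2(0,T;V^*)}^2\le C(1+\tau^{-1})$. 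Multiplying by $\tau/(1+\tau)$ and invoking the elementary identity $\tfrac{\tau}{1+\tau}(1+\tau^{-1})=1$ absorbs the singular factor and delivers the two remaining bounds with a constant independent of $\tau$.

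The main obstacle is not analytical depth, since essentially everything rests on the already-proved uniform estimates of Section \ref{Sec4}, but rather the careful tracking of the dependence on $\ep$ and $\tau$ so that the limiting constant $C$ stays uniform over $\ep\in(0,\ep_1)$ and $\tau\in(0,\overline\tau)$. In particular one must verify that the growth bound \eqref{b10} for $\lambda_\ep'(\underline\varphi_h)$ together with the $(1+\tau^{-1})$ factor of Lemma \ref{8estiPtaueph} are exactly compensated by the $\tau/(1+\tau)$ weights, which is precisely why those two terms are forced to carry such weights in the statement.
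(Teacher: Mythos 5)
Your proposal is correct and coincides with the paper's argument: the paper's proof of this lemma is the single sentence ``Combining Lemmas \ref{firstestiPtaueph}--\ref{8estiPtaueph} leads to Lemma \ref{1estiPtauep}'', i.e.\ exactly the transfer of the uniform-in-$h$ discrete estimates to the limit solution via weak and weak-$*$ lower semicontinuity that you carry out in detail, including the identification of the limits of $\beta_\ep(\overline\varphi_h)$ and of the time-derivative terms and the cancellation $\frac{\tau}{1+\tau}(1+\tau^{-1})=1$. You have merely made explicit the bookkeeping that the paper leaves to the reader.
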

\begin{proof}
Combining Lemmas \ref{firstestiPtaueph}-\ref{8estiPtaueph} 
leads to Lemma \ref{1estiPtauep}. 
\end{proof}

\begin{lem}\label{2estiPtauep}
Let $\ep_{1}$ be as in Lemma \ref{firstestiPtaueph} 
and let $\overline{\tau}$ be as in Theorem \ref{maintheorem1}.  
Then there exists a constant $C>0$ depending on the data such that  
\begin{align*} 
\|\lambda_{\ep}(\varphi_{\ep})\|_{L^{\infty}(0, T; H)}^2
+ \|\mbox{\rm Ln$_{\ep}$}(\theta_{\ep})\|_{L^{\infty}(0, T; H)}^2
\leq C
\end{align*}
for all $\ep\in(0, \ep_{1})$ and all $\tau \in (0, \overline{\tau})$.    
\end{lem}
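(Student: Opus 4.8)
The plan is to bound the two summands separately. For $\lambda_{\ep}(\varphi_{\ep})$ I would argue directly: by \eqref{lamep2} one has $|\lambda_{\ep}(r)|\le C(1+|r|^2)$ with $C$ independent of $\ep$, so that $\|\lambda_{\ep}(\varphi_{\ep})(t)\|_{H}^2\le C(1+\|\varphi_{\ep}(t)\|_{L^4(\Omega)}^4)$; the continuous embedding $V\hookrightarrow L^4(\Omega)$ (valid for $d\le 3$) together with the uniform bound $\|\varphi_{\ep}\|_{L^{\infty}(0,T;V)}\le C$ from Lemma~\ref{1estiPtauep} immediately gives $\|\lambda_{\ep}(\varphi_{\ep})\|_{L^{\infty}(0,T;H)}\le C$. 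Along the way I record the companion fact that $\|\lambda_{\ep}(\varphi_{\ep})\|_{L^2(0,T;V)}\le C$: its $L^2(0,T;H)$ part follows from the previous line, while $\nabla\lambda_{\ep}(\varphi_{\ep})=\lambda_{\ep}'(\varphi_{\ep})\nabla\varphi_{\ep}$ is controlled in $L^2(0,T;H)$ via $\|\lambda_{\ep}'(\varphi_{\ep})\|_{L^{\infty}(0,T;L^6(\Omega))}\le C$ (cf.\ \eqref{b10}) and $\nabla\varphi_{\ep}\in L^2(0,T;L^3(\Omega))$, which comes from $\|\varphi_{\ep}\|_{L^2(0,T;W)}\le C$ in Lemma~\ref{1estiPtauep}.

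For the logarithmic term the key idea is to work with the combined entropy variable $w_{\ep}:=c_{s}\,\mathrm{Ln}_{\ep}(\theta_{\ep})+\lambda_{\ep}(\varphi_{\ep})$ and to test \eqref{dfPtauepsol1} with $v=w_{\ep}(t)\in V$ (admissible, since $\mathrm{Ln}_{\ep}(\theta_{\ep})(t)\in V$ and $\lambda_{\ep}(\varphi_{\ep})(t)\in V$ a.e.). The reason for using $w_{\ep}$ rather than $\mathrm{Ln}_{\ep}(\theta_{\ep})$ alone is decisive for $\tau$-uniformity: the two time-derivative contributions in \eqref{dfPtauepsol1} collapse into $\langle\partial_t w_{\ep},w_{\ep}\rangle_{V^*,V}=\tfrac12\tfrac{d}{dt}\|w_{\ep}\|_H^2$, whose only input is the $\tau$-uniform bound $\|\partial_t w_{\ep}\|_{L^2(0,T;V^*)}\le C$ of Lemma~\ref{1estiPtauep}; this avoids entirely the quantity $\partial_t\varphi_{\ep}$, which is only controlled in $L^2(0,T;H)$ up to a factor $\tau^{-1/2}$. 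Since $w_{\ep}\in H^1(0,T;V^*)\cap L^2(0,T;V)$ the chain rule for $\tfrac12\tfrac{d}{dt}\|w_{\ep}\|_H^2$ is licit, and the initial datum is harmless because $\|w_{\ep}(0)\|_H=\|c_s\mathrm{Ln}_{\ep}(\theta_0)+\lambda_{\ep}(\varphi_0)\|_H\le C$ uniformly, using $\theta_*\le\theta_0\le\theta^*$ and $\varphi_0\in V$.

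Integrating over $(0,t)$, the diffusion term produces $\eta c_s\int_0^t\!\int_{\Omega}\mathrm{Ln}_{\ep}'(\theta_{\ep})|\nabla\theta_{\ep}|^2\ge0$ plus a cross term $\eta\int_0^t\!\int_{\Omega}\nabla\theta_{\ep}\cdot\lambda_{\ep}'(\varphi_{\ep})\nabla\varphi_{\ep}$ bounded by $C$ through $\|\theta_{\ep}\|_{L^2(0,T;V)}$ and the $L^2(0,T;H)$ bound on $\lambda_{\ep}'(\varphi_{\ep})\nabla\varphi_{\ep}$ noted above. The boundary contribution, after combining the two surface integrals, is $\int_0^t\!\int_{\Gamma}\alpha_{\Gamma}(\theta_{\ep}-\theta_{\Gamma})w_{\ep}$. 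Its $\lambda_{\ep}(\varphi_{\ep})$ part is estimated by $\|\theta_{\ep}\|_{L^2(0,T;L^2(\Gamma))}$ and $\|\lambda_{\ep}(\varphi_{\ep})\|_{L^2(0,T;L^2(\Gamma))}\le C\|\lambda_{\ep}(\varphi_{\ep})\|_{L^2(0,T;V)}\le C$; for its $\mathrm{Ln}_{\ep}(\theta_{\ep})$ part I would exploit monotonicity by writing $(\theta_{\ep}-\theta_{\Gamma})\mathrm{Ln}_{\ep}(\theta_{\ep})=(\theta_{\ep}-\theta_{\Gamma})(\mathrm{Ln}_{\ep}(\theta_{\ep})-\mathrm{Ln}_{\ep}(\theta_{\Gamma}))+(\theta_{\ep}-\theta_{\Gamma})\mathrm{Ln}_{\ep}(\theta_{\Gamma})$, keeping the first, nonnegative summand on the left and bounding the second by $\sup|\mathrm{Ln}_{\ep}(\theta_{\Gamma})|\,\int_0^t\!\int_{\Gamma}|\theta_{\ep}-\theta_{\Gamma}|\le C$, where $\sup|\mathrm{Ln}_{\ep}(\theta_{\Gamma})|\le C$ uniformly because $\theta_{\Gamma}$ ranges in the compact positive interval $[\theta_*,\theta^*]$. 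Finally $\int_0^t(f,w_{\ep})_H\le\tfrac12\|f\|_{L^2(0,T;H)}^2+\tfrac12\int_0^t\|w_{\ep}\|_H^2$, and the Gronwall lemma yields $\|w_{\ep}\|_{L^{\infty}(0,T;H)}\le C$ uniformly in $\ep$ and $\tau$. The claimed bound on $\mathrm{Ln}_{\ep}(\theta_{\ep})$ then follows from $\mathrm{Ln}_{\ep}(\theta_{\ep})=(w_{\ep}-\lambda_{\ep}(\varphi_{\ep}))/c_s$ and the first part.

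The main obstacle is the boundary integral coupling $\theta_{\ep}$ and $\mathrm{Ln}_{\ep}(\theta_{\ep})$: a naive trace estimate would force the $\ep$-dependent Lipschitz constant of $\mathrm{Ln}_{\ep}$ into the bound (through $\nabla\mathrm{Ln}_{\ep}(\theta_{\ep})$), destroying uniformity. The monotonicity splitting above is what circumvents this, since it replaces $\mathrm{Ln}_{\ep}(\theta_{\ep})$ on $\Gamma$ by the bounded, smooth datum $\mathrm{Ln}_{\ep}(\theta_{\Gamma})$. A secondary point is to insist on $\tau$-uniform constants throughout (as required for the later passage $\tau\searrow0$), which is precisely why one tests with $w_{\ep}$ and never isolates $\partial_t\varphi_{\ep}$.
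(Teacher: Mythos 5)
Your proposal follows essentially the same route as the paper's proof: a direct pointwise bound $|\lambda_{\ep}(r)|\le C(1+|r|^2)$ combined with $V\hookrightarrow L^4(\Omega)$ and Lemma \ref{1estiPtauep} for the first term, and then testing \eqref{dfPtauepsol1} with the combined variable $c_{s}\mbox{\rm Ln$_{\ep}$}(\theta_{\ep})+\lambda_{\ep}(\varphi_{\ep})$, exploiting the monotonicity of $\mbox{\rm Ln$_{\ep}$}$ for both the diffusion and the boundary terms, the uniform bound on $\mbox{\rm Ln$_{\ep}$}(\theta_{\Gamma})$ from (C7), the control of $\nabla\lambda_{\ep}(\varphi_{\ep})$ through the $L^2(0,T;W)$ bound on $\varphi_{\ep}$, and the Gronwall lemma. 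The argument is correct and matches \eqref{can1}--\eqref{can8} in the paper.
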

\begin{proof}
The Taylor formula with integral remainder and \eqref{lamep2} imply that  
\begin{align*}
|\lambda_{\ep}(\varphi_{\ep}(t))| 
&\leq 
|\lambda_{\ep}(0)| + |\lambda_{\ep}'(0)||\varphi_{\ep}(t)| 
       + \frac{\|\lambda_{\ep}''\|_{L^{\infty}(\mathbb{R})}}{2}|\varphi_{\ep}(t)|^2   
\\ \notag 
&\leq M_{\lambda}\left(1+|\varphi_{\ep}(t)|+\frac{1}{2}|\varphi_{\ep}(t)|^2 \right). 
\end{align*}
Then we see from 
the continuity of the embedding $V \hookrightarrow L^4(\Omega)$ 
and Lemma \ref{1estiPtauep}  
that 
\begin{align}\label{can1}
\|\lambda_{\ep}(\varphi_{\ep}(t))\|_{H}^2 
\leq C_{1} + C_{1}\|\varphi_{\ep}(t)\|_{L^4(\Omega)}^4 
\leq C_{1} + C_{2}\|\varphi_{\ep}(t)\|_{V}^4 
\leq C_{3} 
\end{align} 
for all $\ep \in (0, \ep_{1})$, $\tau \in (0, \overline{\tau})$,  
a.a.\ $t \in (0, T)$ and for some constants $C_{1}, C_{2}, C_{3} > 0$. 
Next we take $v = c_{s}\mbox{\rm Ln$_{\ep}$}(\theta_{\ep}(t))
                                            +\lambda_{\ep}(\varphi_{\ep}(t))$ 
in \eqref{dfPtauepsol1} 
to derive that 
\begin{align}\label{can2}
&\frac{1}{2}\frac{d}{dt}\|c_{s}\mbox{\rm Ln$_{\ep}$}(\theta_{\ep}(t))
                                            +\lambda_{\ep}(\varphi_{\ep}(t))\|_{H}^2 
+ c_{s}\eta\int_{\Omega}\nabla\theta_{\ep}(t)
                                   \cdot\nabla\mbox{\rm Ln$_{\ep}$}(\theta_{\ep}(t)) 
\\ \notag 
&+c_{s}\int_{\Gamma} \alpha_{\Gamma}(\theta_{\ep}(t)-\theta_{\Gamma}(t))
                        (\mbox{\rm Ln$_{\ep}$}(\theta_{\ep}(t))
                                                 -\mbox{\rm Ln$_{\ep}$}(\theta_{\Gamma}(t))) 
\\ \notag 
&=\bigl(f(t), c_{s}\mbox{\rm Ln$_{\ep}$}(\theta_{\ep}(t))
                                            +\lambda_{\ep}(\varphi_{\ep}(t)) \bigr)_{H} 
\\ \notag 
 &\,\quad- \int_{\Gamma}\alpha_{\Gamma}(\theta_{\ep}(t)-\theta_{\Gamma}(t))
                                         (c_{s}\mbox{\rm Ln$_{\ep}$}(\theta_{\Gamma}(t)) 
                                                                    + \lambda_{\ep}(\varphi_{\ep}(t))) 
- \eta \int_{\Omega}\nabla\theta_{\ep}(t)\cdot
                                       \nabla\lambda_{\ep}(\varphi_{\ep}(t)). 
\end{align}
Here it follows from Remark \ref{def.of.rhoep} that 
\begin{align}\label{can3}
&c_{s}\eta\int_{\Omega}\nabla\theta_{\ep}(t)
                                     \cdot\nabla\mbox{\rm Ln$_{\ep}$}(\theta_{\ep}(t)) 
\\ \notag 
&=  c_{s}\eta\ep\|\nabla\theta_{\ep}(t)\|_{H}^2  
     + c_{s}\eta\ep\|\nabla\ln_{\ep}(\theta_{\ep}(t))\|_{H}^2 
     + c_{s}\eta\int_{\Omega}\frac{|\nabla\rho_{\ep}(\theta_{\ep}(t))|^2}
                                                             {\rho_{\ep}(\theta_{\ep}(t))}. 
\end{align}
The monotonicity of $\mbox{\rm Ln$_{\ep}$}$ and (C4) entail that 
\begin{align}\label{can4}
c_{s}\int_{\Gamma} \alpha_{\Gamma}(\theta_{\ep}(t)-\theta_{\Gamma}(t))
                        (\mbox{\rm Ln$_{\ep}$}(\theta_{\ep}(t))
                                                 -\mbox{\rm Ln$_{\ep}$}(\theta_{\Gamma}(t))) 
\geq 0. 
\end{align}
From (C7) we can observe that   
\begin{align}\label{can5}
|\mbox{\rm Ln$_{\ep}$}(\theta_{\Gamma}(t))| 
\leq \ep|\theta_{\Gamma}| + |\ln \theta_{\Gamma}(t)| 
\leq \theta^{*} + \max_{\theta_{*} \leq r \leq \theta^{*}}|\ln r|
\end{align}
for all $\ep \in (0, 1]$ and a.a.\ $t \in (0, T)$.  
We deduce from \eqref{keyineq}, \eqref{lamep2}, 
the continuity of the embedding $V \hookrightarrow L^4(\Omega)$ 
and Lemma \ref{1estiPtauep} that  
\begin{align}\label{can6}
&\|\lambda_{\ep}(\varphi_{\ep}(t))\|_{L^2(\Gamma)}^2 
  + \|\nabla\lambda_{\ep}(\varphi_{\ep}(t))\|_{H}^2 
\\ \notag 
&\leq \frac{1}{C_{*}}\|\lambda_{\ep}(\varphi_{\ep}(t))\|_{V}^2  
\\ \notag 
&= \frac{1}{C_{*}}\|\lambda_{\ep}(\varphi_{\ep}(t))\|_{H}^2 
     + \frac{1}{C_{*}}\int_{\Omega}
                            |\lambda_{\ep}'(\varphi_{\ep}(t))|^2 |\nabla \varphi_{\ep}(t)|^2 
\\ \notag
&\leq \frac{1}{C_{*}}\|\lambda_{\ep}(\varphi_{\ep}(t))\|_{H}^2 
     + \frac{2\|\lambda_{\ep}''\|_{L^{\infty}(\mathbb{R})}^2}{C_{*}}
                                             \|\varphi_{\ep}(t)\|_{L^4(\Omega)}^2
                                                      \|\nabla \varphi_{\ep}(t)\|_{L^4(\Omega)}^2  
     + \frac{2|\lambda_{\ep}'(0)|^2}{C_{*}}\|\nabla \varphi_{\ep}(t)\|_{H}^2 
\\ \notag 
&\leq \frac{C_{3}}{C_{*}} 
         + C_{4}(1 + \|\varphi_{\ep}\|_{L^{\infty}(0, T; V)}^2)\|\varphi_{\ep}(t)\|_{W}^2 
\\ \notag 
&\leq \frac{C_{3}}{C_{*}} + C_{5}\|\varphi_{\ep}(t)\|_{W}^2 
\end{align}
for all $\ep \in (0, \ep_{1})$, $\tau \in (0, \overline{\tau})$,  
a.a.\ $t \in (0, T)$ and for some constants $C_{3}, C_{4}, C_{5} > 0$.  
Thus we have from \eqref{can2}-\eqref{can6} and the Young inequality 
that there exists a constant $C_{6} > 0$ satisfying 
\begin{align}\label{can7}
&\frac{1}{2}\frac{d}{dt}\|c_{s}\mbox{\rm Ln$_{\ep}$}(\theta_{\ep}(t))
                                            +\lambda_{\ep}(\varphi_{\ep}(t))\|_{H}^2 
\\ \notag 
&\leq \frac{1}{2}\|f(t)\|_{H}^2 
         + \frac{1}{2}\|c_{s}\mbox{\rm Ln$_{\ep}$}(\theta_{\ep}(t))
                                            +\lambda_{\ep}(\varphi_{\ep}(t))\|_{H}^2 
\\ \notag 
    &\,\quad+ C_{6}\|\theta_{\ep}(t)\|_{V}^2 
                + C_{6}\|\theta_{\ep}(t)\|_{L^2(\Gamma)}^2 
                + C_{6}\|\varphi_{\ep}(t)\|_{W}^2 + C_{6}  
\end{align}
for all $\ep \in (0, \ep_{1})$, $\tau \in (0, \overline{\tau})$ and a.a.\ $t \in (0, T)$. 
Here, 
since we can obtain that 
\begin{align*}
|c_{s}\mbox{\rm Ln$_{\ep}$}(\theta_{0})+\lambda_{\ep}(\varphi_{0})| 
&\leq c_{s}\ep|\theta_{0}|  + c_{s}|\ln \theta_{0}| 
       + |\lambda_{\ep}(0)| + |\lambda_{\ep}'(0)||\varphi_{0}| 
       + \frac{\|\lambda_{\ep}''\|_{L^{\infty}(\mathbb{R})}}{2}|\varphi_{0}|^2  
\\ \notag 
&\leq c_{s}|\theta_{0}| + c_{s}\max_{\theta_{*} \leq r \leq \theta^{*}}|\ln r| 
             + M_{\lambda}\left(1+|\varphi_{0}|+\frac{1}{2}|\varphi_{0}|^2 \right)
\end{align*}
by the Taylor formula with integral remainder, (C7) and \eqref{lamep2}, 
the identities 
\begin{align}\label{can8}
\|c_{s}\mbox{\rm Ln$_{\ep}$}(\theta_{0})+\lambda_{\ep}(\varphi_{0})\|_{H}^2 
\leq C_{7} + C_{7}\|\varphi_{0}\|_{L^4(\Omega)}^4 
\leq C_{7} + C_{8}\|\varphi_{0}\|_{V}^4 
\end{align}
hold for all $\ep \in (0, \ep_{1})$ and for some constants $C_{7}, C_{8} > 0$. 
Hence, integrating \eqref{can7} over $(0, t)$, where $t \in [0, T]$, 
and using \eqref{can8}, (C7), and Lemma \ref{1estiPtauep}, we conclude that 
there exists a constant $C_{9} > 0$ such that 
\begin{align*}
\frac{1}{2}\|c_{s}\mbox{\rm Ln$_{\ep}$}(\theta_{\ep}(t))
                                            +\lambda_{\ep}(\varphi_{\ep}(t))\|_{H}^2 
\leq  \frac{1}{2}\int_{0}^{t}\|c_{s}\mbox{\rm Ln$_{\ep}$}(\theta_{\ep}(s))
                                            +\lambda_{\ep}(\varphi_{\ep}(s))\|_{H}^2\,ds  
        + C_{9}
\end{align*}
for all $\ep \in (0, \ep_{1})$, $\tau \in (0, \overline{\tau})$ and a.a.\ $t \in (0, T)$. 
Therefore by applying the Gronwall lemma 
there exists a constant $C_{10} > 0$ satisfying 
\begin{align*}
\|c_{s}\mbox{\rm Ln$_{\ep}$}(\theta_{\ep}(t))+\lambda_{\ep}(\varphi_{\ep}(t))\|_{H}^2 
\leq  C_{10}
\end{align*} 
for all $\ep \in (0, \ep_{1})$, $\tau \in (0, \overline{\tau})$ and a.a.\ $t \in (0, T)$, 
which leads to Lemma \ref{2estiPtauep} by \eqref{can1}. 
\end{proof}

\begin{prth2.1}
Let $\tau \in (0, \overline{\tau})$, 
where $\overline{\tau}$ is as in Theorem \ref{maintheorem1}.  
Then we combine Lemmas \ref{1estiPtauep}, \ref{2estiPtauep}, 
the Aubin--Lions lemma and the Ascoli--Arzela theorem 
to infer that there exist some functions 
\begin{align*}
&\varphi \in H^1(0, T; H) \cap L^{\infty}(0, T; V) \cap L^2(0, T; W), \\[1mm]
&\theta \in L^2(0, T; V), \\[1mm] 
&u \in H^1(0, T; V^{*}) \cap L^{\infty}(0, T; H), \\[1mm]  
&w \in H^1(0, T; V^{*}) \cap L^{\infty}(0, T; H), \\[1mm]  
&\mu \in L^2(0, T; V), \\[1mm]  
&\xi \in L^2(0, T; H)  
\end{align*} 
such that, possibly for a subsequence $\ep_{j}$,  
\begin{align}
&\varphi_{\ep} \to \varphi 
\quad \mbox{weakly$^*$ in}\ H^1(0, T; H) 
\cap L^{\infty}(0, T; V) \cap L^2(0, T; W), \label{Ptauep.conv1} 
\\[1.5mm] 
&\varphi_{\ep} \to \varphi 
\quad \mbox{strongly in}\ C([0, T]; H) \cap L^2(0, T; V), \label{Ptauep.conv2} 
\\[1.5mm] 
&\theta_{\ep} \to \theta
\quad \mbox{weakly in}\ L^2(0, T; V) \subset L^2(0, T; L^2(\Gamma)), 
\label{Ptauep.conv3} 
\\[1.5mm] 
&\ep\theta_{\ep} \to 0  
\quad \mbox{strongly in}\ L^{\infty}(0, T; H), 
\label{Ptauep.conv4} 
\\[1.5mm] 
&\mbox{\rm Ln$_{\ep}$}(\theta_{\ep}) \to u
\quad \mbox{weakly$^{*}$ in}\ H^1(0, T; V^{*}) \cap L^{\infty}(0, T; H),  
\label{Ptauep.conv5} 
\\[1.5mm] 
&\mbox{\rm Ln$_{\ep}$}(\theta_{\ep}) \to u
\quad \mbox{strongly in}\ C([0, T]; V^{*}), 
\label{Ptauep.conv6} 
\\[1.5mm] 
&\lambda_{\ep}(\varphi_{\ep}) \to w  
\quad \mbox{weakly$^{*}$ in}\ H^1(0, T; V^{*}) \cap L^{\infty}(0, T; H), 
\label{Ptauep.conv7} 
\\[1.5mm] 
&\mu_{\ep} \to \mu \quad \mbox{weakly in}\ L^2(0, T; V), 
\label{Ptauep.conv8} 
\\[1.5mm] 
&\beta_{\ep}(\varphi_{\ep}) \to \xi 
\quad \mbox{weakly in}\ L^2(0, T; H) \label{Ptauep.conv9}
\end{align}
as $\ep = \ep_{j} \searrow 0$. 
We have from 
\eqref{Ptauep.conv3}-\eqref{Ptauep.conv6} that   
\begin{align*}
&\ln_{\ep}(\theta_{\ep}) 
= \mbox{\rm Ln$_{\ep}$}(\theta_{\ep}) - \ep\theta_{\ep} 
\to u \quad \mbox{weakly in}\ L^2(0, T; H), 
\\[1mm] 
&\theta_{\ep} \to \theta \quad \mbox{weakly in}\ L^2(0, T; H) 
\end{align*}
as $\ep = \ep_{j} \searrow 0$ and 
\begin{align*}
\int_{0}^{T} \bigl(\ln_{\ep}(\theta_{\ep}(t)), \theta_{\ep}(t) \bigr)_{H}\,dt  
&= \int_{0}^{T} \langle 
                          \mbox{\rm Ln$_{\ep}$}(\theta_{\ep}(t)), \theta_{\ep}(t) 
                  \rangle_{V^{*}, V}\,dt 
     - \ep\int_{0}^{T}\|\theta_{\ep}(t)\|_{H}^2\,dt 
\\ \notag 
&\to \int_{0}^{T} \langle u(t), \theta(t) \rangle_{V^{*}, V}\,dt 
= \int_{0}^{T} (u(t), \theta(t))_{H}\,dt
\end{align*}
as $\ep = \ep_{j} \searrow 0$.   
Hence the identity 
\begin{align}\label{Ptau.ln}
u = \ln \theta   
\end{align}
holds a.e.\ on $\Omega\times(0, T)$ (see, e.g., \cite[Lemma 1.3, p.\ 42]{Barbu1}). 
We see from the Taylor formula with integral remainder and \eqref{lamep2} 
that 
\begin{align*}
&|\lambda_{\ep}(\varphi_{\ep}) - \lambda(\varphi)| 
\\ \notag 
&\leq |\lambda_{\ep}(\varphi_{\ep}) - \lambda_{\ep}(\varphi)| 
      + |\lambda_{\ep}(\varphi) - \lambda(\varphi)|       
\\ \notag 
&\leq |\lambda_{\ep}'(\varphi)| |\varphi_{\ep}-\varphi| 
         + \frac{\|\lambda_{\ep}''\|_{L^{\infty}(\mathbb{R})}}{2}
                                                                        |\varphi_{\ep}-\varphi|^2 
         + |\lambda_{\ep}(\varphi) - \lambda(\varphi)|       
\\ \notag 
&\leq M_{\lambda}|\varphi||\varphi_{\ep}-\varphi| 
         + M_{\lambda}|\varphi_{\ep}-\varphi| 
         + \frac{M_{\lambda}}{2}|\varphi_{\ep}-\varphi|^2 
         + |\lambda_{\ep}(\varphi) - \lambda(\varphi)|,        
\end{align*}
whence we obtain that 
\begin{align*}
&\|\lambda_{\ep}(\varphi_{\ep}) - \lambda(\varphi)\|_{L^2(0, T; H)} 
\\ \notag 
&\leq C_{1}\|\varphi\|_{L^{\infty}(0, T; L^4(\Omega))}
                          \|\varphi_{\ep}-\varphi\|_{L^2(0, T; L^4(\Omega))} 
         + C_{1}\|\varphi_{\ep}-\varphi\|_{L^2(0, T; H)} 
\\ \notag 
&\,\quad+ C_{1}\|\varphi_{\ep}-\varphi\|_{L^{\infty}(0, T; L^4(\Omega))}
                                      \|\varphi_{\ep}-\varphi\|_{L^2(0, T; L^4(\Omega))} 
             + C_{1}\|\lambda_{\ep}(\varphi)-\lambda(\varphi)\|_{L^2(0, T; H)}      
\\ \notag 
&\leq C_{2}\|\varphi\|_{L^{\infty}(0, T; V)}
                          \|\varphi_{\ep}-\varphi\|_{L^2(0, T; V)}  
         + C_{1}\|\varphi_{\ep}-\varphi\|_{L^2(0, T; H)}  
\\ \notag 
 &\,\quad+ C_{2}\|\varphi_{\ep}-\varphi\|_{L^{\infty}(0, T; V)}
                                               \|\varphi_{\ep}-\varphi\|_{L^2(0, T; V)} 
         + C_{1}\|\lambda_{\ep}(\varphi) -\lambda(\varphi)\|_{L^2(0, T; H)}  
\end{align*}
for all $\ep \in (0, \ep_{1})$ and for some constants $C_{1}, C_{2} > 0$. 
Thus, thanks to \eqref{Ptauep.conv1}, \eqref{Ptauep.conv2}, 
\eqref{lamep2}, \eqref{lamep3} and the Lebesgue dominated convergence theorem,  
we can verify that 
\begin{align}\label{strongconv.lamep}
\lambda_{\ep}(\varphi_{\ep}) \to \lambda(\varphi) 
\quad \mbox{strongly in}\ L^2(0, T; H) 
\end{align}
as $\ep = \ep_{j} \searrow 0$. 
Therefore it holds that 
\begin{align}\label{Ptau.w.lambda}
w = \lambda(\varphi)
\end{align}
a.e.\ on $\Omega\times(0, T)$ 
by \eqref{Ptauep.conv7} and \eqref{strongconv.lamep}. 
Hence, in the light of \eqref{Ptau.ln} and \eqref{Ptau.w.lambda}, 
we can prove that $\theta$, $\mu$, $\varphi$ satisfy \eqref{dfPtausol1} 
and \eqref{dfPtausol2} by passing to the limit 
in \eqref{dfPtauepsol1} and \eqref{dfPtauepsol2} 
as $\ep=\ep_{j}$ tends to $0$. 
The initial conditions \eqref{dfPtausol4} follow from \eqref{dfPtauepsol4}, 
due to \eqref{Ptauep.conv6}, \eqref{Ptau.ln} and \eqref{Ptauep.conv2}. 

Next we confirm that  
\begin{align}\label{Ptauhenbun}
&\int_{\Omega\times(0, T)}  \bigl(\mu - \tau\partial_{t}\varphi 
                          + \gamma\Delta\varphi 
                         - \xi - \sigma'(\varphi) 
 + \lambda'(\varphi)\theta \bigr)\psi 
= 0
\end{align}
for all $\psi \in C_{\mathrm c}^{\infty}(\Omega\times(0, T))$. 
Indeed, it follows from \eqref{lamep2} that 
\begin{align*}
&\|\lambda_{\ep}'(\varphi_{\ep}) - \lambda'(\varphi)\|_{L^2(0, T; H)} 
\\ \notag 
&\leq \|\lambda_{\ep}'(\varphi_{\ep}) - \lambda_{\ep}'(\varphi)\|_{L^2(0, T; H)}  
      + \|\lambda_{\ep}'(\varphi) - \lambda'(\varphi)\|_{L^2(0, T; H)}     
\\ \notag 
&\leq  M_{\lambda}\|\varphi_{\ep}-\varphi\|_{L^2(0, T; H)}  
         + \|\lambda_{\ep}'(\varphi) - \lambda'(\varphi)\|_{L^2(0, T; H)}.          
\end{align*}
Thus we deduce from \eqref{Ptauep.conv2}, 
\eqref{lamep2}, \eqref{lamep3} and the Lebesgue dominated convergence theorem 
that  
\begin{align}\label{Ptau.conv.derivative.lambda}
\lambda_{\ep}'(\varphi_{\ep}) \to \lambda'(\varphi) 
\quad \mbox{strongly in}\ L^2(0, T; H) 
\end{align}
as $\ep = \ep_{j} \searrow 0$. 
Then it follows from 
\eqref{Ptauep.conv1}, \eqref{Ptauep.conv2}, \eqref{Ptauep.conv8},  
\eqref{Ptauep.conv9} and \eqref{Ptau.conv.derivative.lambda} 
that    
\begin{align*}
0&=\int_{0}^{T} \Bigl(\int_{\Omega} 
                   \bigl(\mu_{\ep}(t) - \tau\partial_{t}\varphi_{\ep}(t) 
                     + \gamma\Delta\varphi_{\ep}(t) 
                     - \beta_{\ep}(\varphi_{\ep}(t)) - \sigma'(\varphi_{\ep}(t)) 
\\ \notag 
&\hspace{9cm} + \lambda_{\ep}'(\varphi_{\ep}(t))\theta_{\ep}(t) 
                                                                                            \bigr)\psi(t) 
               \Bigr)\,dt 
\\[1mm] \notag 
&=\int_{0}^{T} \bigl(\mu_{\ep}(t) - \tau\partial_{t}\varphi_{\ep}(t) 
                          + \gamma\Delta\varphi_{\ep}(t) 
                          - \beta_{\ep}(\varphi_{\ep}(t)) 
                          - \sigma'(\varphi_{\ep}(t)), \psi(t) \bigr)_{H}\,dt 
\\ \notag 
&\,\quad + \int_{0}^{T} \bigl(\psi(t)\lambda_{\ep}'(\varphi_{\ep}(t)), 
                                                                  \theta_{\ep}(t) \bigr)_{H}\,dt 
\\[7mm] \notag 
&\to \int_{0}^{T} \bigl(\mu(t) - \tau\partial_{t}\varphi(t) 
                          + \gamma\Delta\varphi(t) 
                          - \xi(t) 
                          - \sigma'(\varphi(t)), \psi(t) \bigr)_{H}\,dt 
\\ \notag 
&\,\quad + \int_{0}^{T} \bigl(\psi(t)\lambda'(\varphi(t)), 
                                                                  \theta(t) \bigr)_{H}\,dt 
\end{align*}
as $\ep = \ep_{j} \searrow 0$. 
Hence \eqref{Ptauhenbun} holds, which means that    
\begin{align}\label{semisecondeqPtau}
\mu = \tau\partial_{t}\varphi - \gamma\Delta\varphi 
                    + \xi 
                    + \sigma'(\varphi) - \lambda'(\varphi)\theta   
         \quad \mbox{a.e.\ on}\ \Omega\times(0, T). 
\end{align}
On the other hand, 
the convergences \eqref{Ptauep.conv2} and \eqref{Ptauep.conv9} yield that  
\begin{align*}
\int_{0}^{T} \bigl(\beta(\varphi_{\ep}(t)), \varphi_{\ep}(t) \bigr)_{H}\,dt 
\to \int_{0}^{T} (\xi(t), \varphi(t))_{H}\,dt
\end{align*}
as $\ep = \ep_{j} \searrow 0$   
and then the inclusion
\begin{align}\label{Ptau.beta}
\xi \in \beta(\varphi)  
\end{align} 
holds a.e.\ on $\Omega\times(0, T)$ (see, e.g., \cite[Lemma 1.3, p.\ 42]{Barbu1}). 
Thus combining \eqref{semisecondeqPtau} and \eqref{Ptau.beta} 
leads to \eqref{dfPtausol3}.  

Therefore we can conclude that Theorem \ref{maintheorem1} holds. 
\qed
\end{prth2.1}

\vspace{10pt}


\section{Estimates for \ref{Ptau} and passage to the limit as $\tau\searrow0$} 
\label{Sec6}

In this section we will prove Theorem \ref{maintheorem2}. 
We will derive existence for \ref{P}  
by passing to the limit in \ref{Ptau} as $\tau\searrow0$.    

\begin{prth2.2}
By Theorem \ref{maintheorem1}    
there exist some functions 
\begin{align*}
&\varphi \in H^1(0, T; V^{*}) \cap L^{\infty}(0, T; V) \cap L^2(0, T; W), \\[1mm] 
&\theta \in L^2(0, T; V), \\[1mm] 
&z \in H^1(0, T; V^{*}) \cap L^{\infty}(0, T; H), \\[1mm] 
&\mu \in L^2(0, T; V), \\[1mm] 
&\xi \in L^2(0, T; H)  
\end{align*}  
such that, possibly for a subsequence $\tau_{j}$,  
\begin{align}
&\varphi_{\tau} \to \varphi  
\quad \mbox{weakly$^*$ in}\ H^1(0, T; V^{*}) 
\cap L^{\infty}(0, T; V) \cap L^2(0, T; W), \label{Ptau.conv1} 
\\[1.5mm] 
&\varphi_{\tau} \to \varphi  
\quad \mbox{strongly in}\ C([0, T]; H) \cap L^2(0, T; V), \label{Ptau.conv2} 
\\[1.5mm] 
&\tau\varphi_{\tau} \to 0 
\quad \mbox{strongly in}\ H^1(0, T; H),  \label{Ptau.conv3} 
\\[1.5mm] 
&\theta_{\tau} \to \theta  
\quad \mbox{weakly in}\ L^2(0, T; V) \subset L^2(0, T; L^2(\Gamma)), 
\label{Ptau.conv4} 
\\[1.5mm] 
&c_{s}\ln \theta_{\tau} + \lambda(\varphi_{\tau}) \to z 
\quad \mbox{weakly$^{*}$ in}\ H^1(0, T; V^{*}) \cap L^{\infty}(0, T; H), 
\label{Ptau.conv5} 
\\[1.5mm] 
&c_{s}\ln \theta_{\tau} + \lambda(\varphi_{\tau}) \to z 
\quad \mbox{strongly in}\ C([0, T]; V^{*}), 
\label{Ptau.conv6} 
\\[1.5mm] 
&\mu_{\tau} \to \mu \quad \mbox{weakly in}\ L^2(0, T; V), 
\label{Ptau.conv7} 
\\[1.5mm] 
&\xi_{\tau} \to \xi  
\quad \mbox{weakly in}\ L^2(0, T; H) \label{Ptau.conv8}
\end{align}
as $\tau = \tau_{j} \searrow 0$. 
The Taylor formula with integral remainder implies that 
\begin{align*}
|\lambda(\varphi_{\tau})-\lambda(\varphi)| 
&\leq |\lambda'(\varphi)||\varphi_{\tau}-\varphi|  
         + \frac{\|\lambda''\|_{L^{\infty}(\mathbb{R})}}{2}|\varphi_{\tau}-\varphi|^2 
\\ \notag 
&\leq  \|\lambda''\|_{L^{\infty}(\mathbb{R})}|\varphi||\varphi_{\tau}-\varphi| 
         + |\lambda'(0)||\varphi_{\tau}-\varphi| 
         + \frac{\|\lambda''\|_{L^{\infty}(\mathbb{R})}}{2}|\varphi_{\tau}-\varphi|^2.  
\end{align*}
Thus there exists a constant $C_{1} > 0$ such that 
\begin{align*}
\|\lambda(\varphi_{\tau})-\lambda(\varphi)\|_{L^2(0, T; H)}  
&\leq C_{1}\|\varphi\|_{L^{\infty}(0, T; V)}
                          \|\varphi_{\tau}-\varphi\|_{L^2(0, T; V)}  
         + C_{1}\|\varphi_{\tau}-\varphi\|_{L^2(0, T; H)} 
\\ \notag   
    &\,\quad+ C_{1}\|\varphi_{\tau}-\varphi\|_{L^{\infty}(0, T; V)}
                                                 \|\varphi_{\tau}-\varphi\|_{L^2(0, T; V)} 
\end{align*} 
for all $\tau \in (0, \overline{\tau})$,   
whence it holds that   
\begin{align}\label{conv.lambda}
\lambda(\varphi_{\tau}) \to \lambda(\varphi) 
\quad \mbox{strongly in}\ L^2(0, T; H)
\end{align} 
as $\tau = \tau_{j} \searrow 0$. 
Then we deduce 
from \eqref{Ptau.conv4}, \eqref{Ptau.conv6} and \eqref{conv.lambda} that 
\begin{align*}
&\int_{0}^{T} \bigl(\ln \theta_{\tau}(t), \theta_{\tau}(t) \bigr)_{H}\,dt 
\\ \notag 
&= \frac{1}{c_{s}}\int_{0}^{T} 
                        \langle 
                          c_{s}\ln \theta_{\tau}(t) + \lambda(\varphi_{\tau}(t)), 
                                                                                 \theta_{\tau}(t) 
                  \rangle_{V^{*}, V}\,dt 
     - \frac{1}{c_{s}}\int_{0}^{T} 
                           \bigl(\lambda(\varphi_{\tau}(t)), \theta_{\tau}(t) \bigr)_{H}\,dt 
\\ \notag 
&\to \frac{1}{c_{s}}\int_{0}^{T} 
                             \langle z(t), \theta(t) \rangle_{V^{*}, V}\,dt 
     - \frac{1}{c_{s}}\int_{0}^{T} 
                           \bigl(\lambda(\varphi(t)), \theta(t) \bigr)_{H}\,dt 
\\ \notag 
&= \frac{1}{c_{s}}\int_{0}^{T} 
                           \bigl(z(t)-\lambda(\varphi(t)), \theta(t) \bigr)_{H}\,dt 
\end{align*} 
as $\tau = \tau_{j} \searrow 0$. 
Thus we have that $\frac{1}{c_{s}}(z-\lambda(\varphi))=\ln \theta$ 
and consequently  
\begin{align}\label{P.ln}
z = c_{s}\ln \theta + \lambda(\varphi)   
\end{align} 
a.e.\ on $\Omega\times(0, T)$ (see, e.g., \cite[Lemma 1.3, p.\ 42]{Barbu1}). 
On the other hand, we infer from \eqref{Ptau.conv2} and \eqref{Ptau.conv8} that 
\begin{align*}
\int_{0}^{T} (\xi_{\tau}(t), \varphi_{\tau}(t))_{H}\,dt 
\to \int_{0}^{T} (\xi(t), \varphi(t))_{H}\,dt
\end{align*}
as $\tau = \tau_{j} \searrow 0$,  
which yields that 
\begin{align}\label{P.beta}
\xi \in \beta(\varphi)  
\end{align}
a.e.\ on $\Omega\times(0, T)$ (see, e.g., \cite[Lemma 1.3, p.\ 42]{Barbu1}). 

Therefore we can verify that the above functions $\varphi$, $\theta$, $\mu$ 
and $\xi$ satisfy \eqref{dfPsol1}-\eqref{dfPsol4}
by 
\eqref{dfPtausol1}-\eqref{dfPtausol4},  
\eqref{Ptau.conv1}-\eqref{Ptau.conv8}, \eqref{P.ln} and \eqref{P.beta}, 
which means that Theorem \ref{maintheorem2} holds. 
\qed
\end{prth2.2}

\vspace{10pt}

\section*{Acknowledgments}
The research of PC is supported by the Italian Ministry of Education, 
University and Research~(MIUR): Dipartimenti di Eccellenza Program (2018--2022) 
-- Dept.~of Mathematics ``F.~Casorati'', University of Pavia. 
In addition, PC gratefully acknowledges some other 
support from the MIUR-PRIN Grant 2015PA5MP7 
``Calculus of Variations'' 
and the GNAMPA (Gruppo Nazionale per l'Analisi Matematica, 
la Probabilit\`a e le loro Applicazioni) of INdAM (Isti\-tuto 
Nazionale di Alta Matematica). 
The research of SK is supported by JSPS Research Fellowships 
for Young Scientists (No.\ 18J21006) 
and JSPS Overseas Challenge Program for Young Researchers. 
%
%
%


\begin{thebibliography}{99}

\bibitem{Barbu1}
V. Barbu,
``Nonlinear semigroups and differential equations in Banach spaces'',
Noord\-hoff,
Leyden,
1976. 

\bibitem{Barbu2}
V. Barbu, 
``Nonlinear differential equations of monotone types in Banach spaces'',
Springer Monographs in Mathematics. Springer, New York, 2010. 

\bibitem{Bon2005}
E. Bonetti, 
{\it A new approach to phase transitions with thermal memory via 
the entropy balance}, 
Mathematical methods and models in phase transitions, 
125--155, Nova Sci. Publ., New York,  2005. 

\bibitem{BBR}  
E. Bonetti, G. Bonfanti, R. Rossi,  
{\it Analysis of a temperature-dependent model for adhesive contact with friction}, 
Phys. D {\bf 285} (2014), 42--62.  

\bibitem{bcfgdue}
E. Bonetti, P. Colli, M. Fabrizio, G. Gilardi, 
{\it Modelling and long-time behaviour for phase transitions 
with entropy balance and thermal memory conductivity}, 
Discrete Contin. Dyn. Syst. Ser. B {\bf 6} (2006), 1001--1026.

\bibitem{BCFG2007} 
E. Bonetti, P. Colli, M. Fabrizio, G. Gilardi,  
{\it Global solution to a singular integro-differential system 
related to the entropy balance}, 
Nonlinear Anal. {\bf 66} (2007), 1949--1979.

\bibitem{BCFG2009}  
E. Bonetti, P. Colli, M. Fabrizio, G. Gilardi,  
{\it Existence and boundedness of solutions for a singular phase field system}, 
J. Differential Equations {\bf 246}  (2009), 3260--3295.

\bibitem{BCF} 
E. Bonetti, P. Colli, M. Fr\'emond:
{\it A phase field model with thermal memory governed by the entropy balance\/},
Math. Models Methods Appl. Sci.
{\bf 13} (2003), 1565--1588. 

\bibitem{BCG} 
E. Bonetti, P. Colli, G. Gilardi,  
{\it Singular limit of an integrodifferential system related to the entropy balance}, 
Discrete Contin. Dyn. Syst. Ser. B  {\bf 19}  (2014), 1935--1953.

\bibitem{BF}
E. Bonetti, M. Fr\'emond, 
{\it A phase transition model with the entropy balance\/},
Math. Methods Appl. Sci.
{\bf 26} (2003), 539--556. 

\bibitem{BFR}
E. Bonetti, M. Fr\'emond, E. Rocca,  
{\it A new dual approach for a class of phase transitions with memory: 
existence and long-time behaviour of solutions}, 
J. Math. Pures Appl. (9) {\bf 88} (2007), 455--481.	

\pier{\bibitem{BR2007}
E. Bonetti, E. Rocca, 
{\it Global existence and long-time behaviour for 
a singular integro-differential phase-field system}, 
Commun. Pure Appl. Anal. {\bf 6} (2007), 367--387.}

\bibitem{Brezis} 
H. Brezis, 
``Op\'erateurs maximaux monotones et \pier{semi-groupes} de contractions
dans les espaces de Hilbert'',
North-Holland Math. Stud.
{\bf 5},
North-Holland,
Amsterdam,
1973. 

\pier{
\bibitem{CC1}
G. Canevari, P. Colli, 
{\it Solvability and asymptotic analysis
of a generalization of the Caginalp phase field system}, 
Commun. Pure Appl. Anal. {\bf 11} (2012), 1959--1982.
%
\bibitem{CC2}
G. Canevari, P. Colli,  
{\it Convergence properties
for a generalization of the Caginalp phase field system}, 
Asymptot. Anal. {\bf 82} (2013), 139--162.}

\bibitem{CC}
P. Colli, M. Colturato,   
{\it Global existence for a singular phase field system related to 
a sliding mode control problem}, 
Nonlinear Anal. Real World Appl. {\bf 41} (2018), 128--151.

\bibitem{CGGS1}
P. Colli, G. Gilardi, M. Grasselli, G. Schimperna,   
{\it The conserved phase-field system with memory}, 
Adv. Math. Sci. Appl. {\bf 11} (2001), 265--291. 

\bibitem{CGGS2}
P. Colli, G. Gilardi, M. Grasselli, G. Schimperna,  
{\it Global existence for the conserved phase field model 
with memory and quadratic nonlinearity}, 
Port. Math. (N.S.) {\bf 58} (2001), 159--170.

\bibitem{CGLN}
P. Colli, G. Gilardi, P. Lauren\c{c}ot, A. Novick-Cohen,  
{\it Uniqueness and long-time behavior for the conserved phase-field system 
with memory}, 
Discrete Contin. Dynam. Systems {\bf 5} (1999), 375--390. 

\bibitem{CK1}
P. Colli, S.\ Kurima, 
    {\it Time discretization of a nonlinear phase field system 
in general domains}, \pier{preprint arXiv:1811.10730 [math.AP] (2018), pp.~1-24.}

\bibitem{C2018}
M. Colturato, 
{\it Well-posedness and longtime behavior for a singular phase field system 
with perturbed phase dynamics}, 
Evol. Equ. Control Theory {\bf 7} (2018), 217--245.
		
\pier{\bibitem{CGMQ}
M. Conti, S. Gatti, A. Miranville, R. Quintanilla,  
{\it On a Caginalp phase-field system with two temperatures and memory}, 
Milan J. Math.  {\bf 85} (2017), 1--27.}

\bibitem{FaGioMo} 
M. Fabrizio, C. Giorgi,  A. Morro,
{\it Internal dissipation, relaxation properties, and free energy 
in materials with fading memory\/},
J.~Elasticity
{\bf 40} (1995), 107--122.

\bibitem{Frel}
M. Fr\'emond,
``Non-smooth thermomechanics'',
Springer-Verlag, Berlin, 2002.

\bibitem{GMS}
G. Gilardi, A. Miranville, G. Schimperna, 
{\it On the Cahn--Hilliard equation with irregular potentials 
and dynamic boundary conditions},  
Commun. Pure Appl. Anal. {\bf 8} (2009), 881--912.

\bibitem{GR2006}
G. Gilardi, E. Rocca, 
{\it Convergence of phase field to phase relaxation models governed 
by an entropy equation with memory}, 
Math. Methods Appl. Sci. {\bf 29} (2006), 2149--2179.

\bibitem{GR2007} 
G. Gilardi, E. Rocca, 
{\it Well-posedness and long-time behaviour for a singular phase field system 
of conserved type}, 
IMA J. Appl. Math. {\bf 72} (2007), 498--530.

\pier{\bibitem{GreenNaghdi}
A.E. Green, P.M. Naghdi, 
{\it A re-examination of the basic postulates of thermo-mechanics}, 
Proc. Roy. Soc. Lond. A {\bf 432} (1991), 171--194.}

\bibitem{Gu}
M.E. Gurtin,
{\it Generalized Ginzburg--Landau and Cahn--Hilliard equations based on a
microforce balance\/},
Phys.~D {\bf 92} (1996), 178--192. 

\pier{%
\bibitem{GPmodel}
 M.E. Gurtin, A.C. Pipkin,
{\it A general theory of heat conduction with finite wave speeds},
Arch. Rational Mech. Anal. {\bf 31} (1968), 113--126}

\bibitem{Jerome}
J.W. Jerome, 
``Approximations of Nonlinear Evolution Systems'',
Mathematics in Science and Engineering {\bf 164},
Academic Press Inc., Orlando, 1983. 

\bibitem{KS}
J. Kou, S. Sun,  
{\it Thermodynamically consistent modeling and simulation of 
multi-component two-phase flow with partial miscibility}, 
Comput. Methods Appl. Mech. Engrg. {\bf 331} (2018), 623--649.
	
\bibitem{M2017}
A. Montanaro,  
{\it On thermo-electro-mechanical simple materials with fading memory: 
restrictions of the constitutive equations in a Green-Naghdi type theory}, 
Meccanica {\bf 52}  (2017), 3023--3031.
		  
\bibitem{MiSc}
A. Miranville, G. Schimperna,
{\it Nonisothermal phase separation based on a microforce balance\/},
Discrete Contin. Dyn. Syst. Ser. B  {\bf 5} (2005), 753--768.

\bibitem{Muller}
I. M\" uller,
{\it Thermodynamics of mixtures and phase field theory\/},
Int. J. Solids Struct. {\bf 38} (2001), 1105--1113.

\bibitem{N1967}
J. Ne\v{c}as, 
``Les m\'ethodes directes en th\'eorie des \'equations elliptiques'', 
(French) Masson et Cie (Eds.), Paris, Academia, Editeurs, Prague, 1967. 

\pier{\bibitem{Podio1}
P. Podio-Guidugli, 
{\it A virtual power format for thermomechanics}, 
Contin. Mech. Thermodyn. {\bf 20} (2009), 479--487.}

\end{thebibliography}
\end{document}